\DeclareSymbolFont{calletters}{OMS}{cmsy}{m}{n}
\DeclareSymbolFontAlphabet{\mathcal}{calletters}
\def \D{\mathbb{D}}
\def \R{\mathbb{R}}
\def \N{\mathbb{N}}
\newcommand{\dbN}{\mathbb{N}}
\newcommand{\dbP}{\mathbb{P}}
\DeclareMathOperator{\Expect}{\mathbb E}
\DeclareMathOperator{\trace}{tr}
\numberwithin{equation}{section}
\renewcommand{\leq}{\leqslant}
\renewcommand{\le}{\leqslant}
\renewcommand{\geq}{\geqslant}
\newcommand{\cC}{\mathcal{C}}
\newcommand{\cF}{\mathcal{F}}
\newcommand{\cL}{\mathcal{L}}
\newcommand{\cM}{\mathcal{M}}
\newcommand{\cP}{\mathcal{P}}
\newcommand{\cT}{\mathcal{T}}
\newcommand{\cW}{\mathcal{W}}
\newcommand{\cX}{\mathcal{X}}
\renewcommand{\d}{\ensuremath{\mathrm{d}}}
\newcommand{\transpose}{\mathsf{T}}
\newcommand{\ie}{{\it{i.e.}}}
\newcommand{\abs}[1]{\left\vert#1\right\vert}
\newcommand{\norm}[1]{\left\|#1\right\|}
\newcommand{\ba}{\begin{array}}
\newcommand{\ea}{\end{array}}
\newcommand{\bea}{\begin{eqnarray}}
\newcommand{\eea}{\end{eqnarray}}
\newcommand{\beaa}{\begin{eqnarray*}}
\newcommand{\eeaa}{\end{eqnarray*}}
\def\limsup{\mathop{\overline{\rm lim}}}
\def\1{{\bf 1}}
\def\:{\!:\!}
\newtheorem{thm}{Theorem}[section]
\newtheorem{lem}[thm]{Lemma}
\newtheorem{cor}[thm]{Corollary}
\newtheorem{prop}[thm]{Proposition}
\newtheorem{rem}[thm]{Remark}
\newtheorem{defn}[thm]{Definition}
\newtheorem{assu}[thm]{Assumption}
\def\1{{\bf 1}}
\def\ie{\textit{i.e.}}
\title{Quantitative weak propagation of chaos \\for McKean--Vlasov branching diffusion processes}
\author{
	Wenjing Cao\thanks{Department of Mathematics, The Chinese University of Hong Kong. wjcao25@link.cuhk.edu.hk}
	\and Zhenjie Ren\thanks{LaMME, Universit\'e Evry Paris-Saclay. zhenjie.ren@univ-evry.fr}
        \and Xiaolu Tan\thanks{Department of Mathematics, The Chinese University of Hong Kong. xiaolu.tan@cuhk.edu.hk, Research supported by Hong Kong RGC General Research Fund (projects 14302921).}
}
\date{\today}
\begin{document}

\maketitle

\abstract{
	We study in this paper the weak propagation of chaos for McKean--Vlasov diffusions with branching,
	whose induced marginal measures are nonnegative finite measures but not necessary probability measures.
	The flow of marginal measures satisfies a non-linear Fokker--Planck equation, along which we provide a functional It\^o's formula.
	We then consider a functional of the terminal marginal measure of the branching process,
	whose conditional value is solution to a Kolmogorov backward master equation.
	By using It\^o's formula and based on the estimates of second-order linear and intrinsic functional derivatives of the value function, 
	we finally derive a quantitative weak convergence rate for the empirical measures of the branching diffusion processes with finite population.
}

\vspace{1mm}

\noindent {\bf Key words.} Branching diffusion, McKean--Vlasov process, Weak propagation of chaos, It\^o's formula.

\vspace{1mm}

{\noindent {\bf MSC (2020)} 60J80, 60H10. }

\section{Introduction}\label{intro}

A classical McKean--Vlasov dynamic is a diffusion process governed by some stochastic differential equation (SDE) with coefficients dependent on marginal distribution of the solution itself, namely,
\[\d X_t=b\big(t,X_t,\text{Law}(X_t)\big)\,\d t+\sigma\big(t,X_t,\text{Law}(X_t)\big)\,\d W_t,\]
where $W=(W_t)$ is a Brownian motion.
Such dynamics are motivated by the study of large-population systems of mean-field interacting particles, when the population size tends to infinity,
and the limit property is the so-called propagation of chaos, cf.~\cite{kac1956foundations,mckean1966class,mckean1967propagation,sznitman1991topics}, etc.
Recently, McKean--Vlasov SDEs have become an increasingly important topic due to their broad applications across diverse fields, 
such as statistical physics (cf.~\cite{benedetto1997kinetic,carrillo2010particle}), mean-field games (cf.~\cite{carmona2018probabilistic,cis/1183728987,lasry2007mean}), economics and finance (cf.~\cite{garnier2013large,lacker2019mean}), and optimization (cf.~\cite{conforti2023game,du2023self,kazeykina2024ergodicity,nitanda2022convex}, etc.).

\vspace{0.5em}

In contrast to the classical McKean--Vlasov (or mean-field) setting, where the population size remains fixed, we study in this paper McKean--Vlasov diffusions with branching, in which the total population size evolves over time.
The study of branching diffusion processes dates back to the seminal works \cite{ikeda1968branching} and \cite{skorokhod1964branching}, which investigated in particular its link to the semilinear partial differential equations (PDEs) as extension of Feynman--Kac formula.
This connection has since been extended to broader classes of semilinear PDEs, motivating the development of novel Monte Carlo methods; see, e.g., \cite{henry2019branching,henry2014numerical}.
More recently, the branching diffusion processes with mean-field interaction has also been studied in the literature. 
In \cite{fontbona2015non}, the authors analyzed the convergence of a birth--death diffusion system with convolution-type interactions and proved well-posedness of the limiting Fokker–Planck equation as the number of particles tends to infinity. 
A quantitative weak propagation of chaos was subsequently obtained in~\cite{fontbona2022quantitative} by using coupling techniques.
In \cite{claisse2024mckean}, the authors studied the McKean--Vlasov branching SDE in a general path-dependent setting, establishing well-posedness and a propagation of chaos result (without rate) by using the weak convergence technique.
We also mention the recent work~\cite{wang2025distribution}, which studies distribution-dependent birth–death processes on discrete state spaces, and the paper \cite{claisse2023mean} which is on the mean-field game with branching diffusion process.

\vspace{0.5em}

Our work builds upon the framework introduced in~\cite{claisse2024mckean}, with the aim of establishing a quantitative rate of convergence for the weak propagation of chaos in branching diffusions with general interaction terms.
More precisely, let $\mu_T$ denote the marginal measure induced by the McKean--Vlasov branching diffusion at final time $T$ and $\mu^N_T$ denote the empirical measure induced by the finite-population branching process, we aim to establish that, for sufficiently smooth functional $G$,
\[\abs{\Expect[G(\mu_T^N)]-G(\mu_T)}=O(N^{-1}), ~\mbox{as}~N \longrightarrow \infty.\]
In the classical (non-branching) McKean--Vlasov setting, such weak propagation of chaos problems have been studied through the analysis of backward Kolmogorov equations and the use of functional derivatives on the space of probability measures.
The idea dates back to \cite{10.1214/19-EJP298}, where the authors used a ``master equation'' to approximate the Nash equilibrium for mean-field games.
This strategy was further developed and applied to various weak propagation of chaos studies,
including McKean--Vlasov SDEs with general coefficients (cf.~\cite{chassagneux2022weak}) and uniform in time convergence (cf.~\cite{delarue2025uniform}). 
In a recent paper \cite{chen2025improved}, improved weak convergence rates have been obtained for mean-field Langevin dynamics, via similar methods. 
Following the classical approach, we define the value function as the conditional value of the terminal function along the flow of marginal measure induced by the branching diffusions.
We then show that the value function satisfies a Kolmogorov backward equation (see \eqref{eq:bKol}) on the space of nonnegative finite measures.
Based on the estimates on the (second order) linear and intrinsic derivatives of the value function, together with our It\^o's formula in Theorem \ref{ito_formula} along the flow of the branching marginal measures, we finally deduce a convergence rate for the finite-population branching diffusion processes.
To obtain the estimates on the derivatives of the value function, a key argument is to relate the branching dynamics on $\R^d$ to a non-branching dynamic on $\R^{d+1}$ (see \eqref{lifted_MeanField}) by using the uniqueness results of the nonlinear Fokker--Planck equation in \cite{claisse2025optimal}.
This allows us to apply the analysis as in~\cite{10.1214/15-AOP1076} to classical McKean--Vlasov SDEs.

\vspace{0.5em}

The remainder of this article is organized as follows.
In Section~\ref{results}, we introduce some notations, the main assumptions as well as our main theorems.
The proofs are completed in Section~\ref{proofs}.
Some intermediate results that are crucial to main theorems, along with their proofs, are provided in Appendices \ref{append_ito}, \ref{append_flow}, \ref{lifted_wellposedness_proof}, and \ref{proof_intrinsic_is_linear}.

\section{Main results}\label{results}

We first provide some notations and definitions in Section~\ref{preliminary},
and then introduce the McKean--Vlasov branching diffusion as well as the finite-population particle systems in Section \ref{subsec:MKV_branch_diffus}.
Main assumptions and theorems are then provided in Section~\ref{main_theorems}.

\subsection{Preliminaries}\label{preliminary}

Given a metric space $(\mathcal{X},d_\cX)$, we denote by $\cP(\cX)$ the space of all Borel probability measures on $\cX$, and by
\[\cP_p(\cX):=\Big\{\mu\in\cP(\cX):\,\int_\cX d_\cX(x,x_0)^p\,\mu(\d x)<\infty\quad\text{for some }x_0\in\cX\Big\}\]
the space of probability measures with finite $p$th moment for $p\geq 1$.
For $\mu,\nu\in\cP_p(\cX)$, the $p$-Wasserstein distance between them is denoted by
\[\cW_p(\mu,\nu):=\inf_{\xi\sim\mu,\eta\sim\nu}\Expect[d_\cX (\xi,\eta)^p]^{1/p}.\]
Analogously, we denote by $\cM(\cX)$ the space of nonnegative finite measures, and by $\cM_p(\cX)$ that of nonnegative finite measures with finite $p$th moment for $p\geq 1$.
For each Borel set $B\subset\cX$, $\cM(B)\subset\cM(\cX)$ denotes the space of nonnegative finite measures supported on $B$, while $\cM_p(B)$, $\cP(B)$ and $\cP_p(B)$ can be similarly defined.
For all $\mu\in\cM(\cX),f\in\mathbb{L}^1(\mu)$, we write
\[\langle f,\mu\rangle:=\int_\cX f(x)\,\mu(\d x).\]
Similarly, we write, for a signed measure $\nu=\nu^+-\nu^-$ where $\nu^+,\nu^-\in\cM(\R^d)$,
\[\langle f,\nu\rangle:=\langle f,\nu^+\rangle-\langle f,\nu^-\rangle,\]
for all bounded measurable functions $f$.

\vspace{0.5em}

As in \cite{claisse2024mckean}, we use the Ulam--Harris--Neveu notation to describe the labels of the particles in the branching process, \ie~let
\[k=k_1k_2...k_m\in\mathbb{K}:=\{\varnothing\}\cup\bigcup_{n=1}^\infty \dbN^n.\]
For each pair of labels $k=k_1...k_m,k^\prime=k^\prime_1...k^\prime_{m^\prime}\in\mathbb{K}$,
we denote by
\[kk^\prime:=k_1...k_m k^\prime_1...k^\prime_{m^\prime}\in\mathbb{K}\]
the concatenation of them, and define the space
\[E:=\Big\{e=\sum_{k\in K}\delta_{(k,x^k)}: K\subset\mathbb{K}\text{ finite},\,x^k\in\R^d,k\nprec k^\prime\text{ for all }k,k^\prime\in K \Big\},\]
where $\delta_\cdot$ denotes the Dirac measure.
The metric on $E$ is defined by
\[d_E(e,e^\prime):=\sum_{k\in K\cap K^\prime}\Big(\abs{x^k-y^k}\wedge 1\Big)+\#(K\triangle K^\prime)\]
for all $e=\sum_{k\in K}\delta_{(k,x^k)},\,e^\prime=\sum_{k\in K^\prime}\delta_{(k,y^k)}\in E$,
where $K\triangle K^\prime:=(K\setminus K^\prime)\cup (K^\prime\setminus K)$, and $\#\cdot$ denotes the number of elements in a finite set.
For $\phi=(\phi^k)_{k\in\mathbb{K}}:\mathbb{K}\times\R^d\to\R$ and $e=\sum_{k\in K}\delta_{(k,x^k)}\in E$, we write
\[\langle\phi,e\rangle:=\sum_{k\in K}\phi^k(x^k).\]
If $\phi^k=f:\R^d\to\R$ for each $k\in\mathbb{K}$, we write
\[\langle f,e\rangle:=\langle\phi,e\rangle=\sum_{k\in K}f(x^k).\]
In particular, 
\[\langle 1,e\rangle=\# K,\]
and we observe that
\[\cP_1(E)=\Big\{m\in \cP(E):\,\int_E\langle1,e\rangle\,m(\d e)<\infty \Big\}.\]
We also introduce the bounded Lipschitz distance, previously used in \cite{fontbona2022quantitative,piccoli2016properties}, as a metric on $\cM(\R^d)$.
\begin{defn}[Bounded Lipschitz distance]\label{wasserstein}
    For finite measures $\mu,\nu\in\cM(\R^d)$, the bounded Lipschitz distance is defined by 
    \[\mathbf{d}(\mu,\nu):=\sup_{\norm{f}_{\infty},\, \norm{f}_{\text{Lip}}\leq1}\abs{\langle f,\mu-\nu\rangle}.\]
    Here $\norm{\cdot}_{\text{Lip}}$ denotes the Lipschitz constant, with respect to the standard Euclidean norm $\abs{\cdot}$.
\end{defn}
\begin{rem}
	$\mathrm{(i)}$ By \cite[Theorem 8.3.2]{bogachev2007measure}, the space $(\cM(\R^d),\mathbf{d})$ is a Polish space, whose topology coincides with that of weak convergence.

	\vspace{0.5em}
	
	\noindent $\mathrm{(ii)}$ In \cite{claisse2025optimal,claisse2023mean}, the authors used a so-called extended Wasserstein distance $\bar{\cW}_p\,(p\geq1)$ on $\cM(\R^d)$, which is in fact equivalent to the bounded Lipschitz distance in the sense of \eqref{eq:equiv_bddLip}.
	
	The extended Wasserstein distance $\bar{\cW}_p$ is introduced as follows.
	For a metric space $(\cX,d_\cX)$, one introduces a cemetery point $\partial\notin \cX$ and let $\bar{\cX}:=\cX\cup\partial$ be the extended space.
By fixing some $x_0\in\cX$ and defining
\[d_{\bar{\cX}}(x,\partial):=d_{\cX}(x,x_0)+1,\,d_{\bar{\cX}}(x,\cdot)|_{\cX}:=d_\cX(x,\cdot),\quad x\in\cX,\]
$(\bar{\cX},d_{\bar{\cX}})$ is still a metric space.
For $\mu,\nu\in\cM_p(\bar{\cX})$ with the same mass $m=\mu(\bar{\cX})=\nu(\bar{\cX})\geq 0$, we write
\[\cW_{p,m}:=\inf_{\lambda\in\Lambda(\mu,\nu)}\Big(\int_{\bar{\cX}\times\bar{\cX}}\big(d_{\bar{\cX}}(x,y)\big)^p\,\lambda(\d x,\d y)\Big)^{\frac{1}{p}},\]
where $\Lambda(\mu,\nu)$ denotes the set of all finite (non-negative) Borel measures on $\bar{\cX}\times\bar{\cX}$ with marginals $\mu,\nu$.
Consequently, the extended Wasserstein distance between $\mu,\nu\in\cM_p(\cX)$ is defined by
\[\bar{\cW}_p(\mu,\nu):=\cW_{p,m}(\bar{\mu}_m,\bar{\nu}_m),\]
where $m\geq \mu(\cX)\vee\nu(\cX)$ and
\[\bar{\mu}_m(\cdot):=\mu(\cdot\cap\cX)+(m-\mu(\cX))\delta_\partial,\quad\bar{\nu}_m(\cdot):=\nu(\cdot\cap\cX)+(m-\nu(\cX))\delta_\partial.\]
For $p=1$, we observe by Kantorovich duality that
\begin{equation}\label{extended_kantorovich}
    \bar{\cW}_1(\mu,\nu)=\sup_{\norm{\bar{f}}_{\text{Lip}(\bar{\cX})}\leq 1}\abs{\int_{\cX}\big(\bar{f}(x)-\bar{f}(\partial)\big)\,(\mu-\nu)(\d x)}.
\end{equation}
Then for $\cX=\R^d,\,d_\cX(\cdot,\cdot)=\abs{\cdot-\cdot}\wedge1$, we can directly verify by \eqref{extended_kantorovich} that, for all $\mu,\nu\in\cM(\R^d)=\cM_1(\R^d,\abs{\cdot}\wedge1)$,
\begin{equation} \label{eq:equiv_bddLip}
	\frac{1}{2}\mathbf{d}(\mu,\nu)\leq \bar{\cW}_1(\mu,\nu)\leq 2\mathbf{d}(\mu,\nu).
\end{equation}
\end{rem}

Unless specifically stated, $\cM(\R^d)$ and $\cM_p(\R^d)$ for all $p\geq 1$ are equipped with the metric $\mathbf{d}$, while $\cP_p(\R^d)$ is equipped with the Wasserstein distance $\cW_p$.
Under certain circumstances, the existence and regularity of density are required for a finite measure as follows,
which is used in both the prior work \cite{hambly2025optimal} and Section~\ref{append_flow} in this paper in order to ensure the well-posedness of nonlinear Fokker--Planck equations.
\begin{defn}\label{density_regularity}
    For $\theta>0$, denote by $\cM^\theta(\R^d)$ the set of all $\mu\in\cM(\R^d)$
    with a density $\varrho$ such that
    \[
        \int_{\R^d}\varrho(x)^2\exp\Big(\theta\sqrt{1+\abs{x}^2}\Big)\,\d x<\infty.
    \]
    Moreover, we define 
    \[\cM_p^\theta(\R^d):=\cM_p(\R^d)\cap\cM^\theta(\R^d).\]
\end{defn}

 \paragraph{Functionals on the space of measures}

	For functionals on $\cM_2(\R^d)$, we follow \cite[Definitions 5.22, 5.43]{carmona2018probabilistic} to introduce give the linear and Intrinsic derivatives.

\begin{defn}[Linear and Intrinsic derivatives]\label{derivatives}
    For a functional $F:\cM_2(\R^d)\to\R$, we say $\frac{\delta F}{\delta\mu}\,(\text{or }\delta_\mu F):\cM_2(\R^d)\times\R^d\to\R$ is a linear (functional) derivative of $F$, if $\delta_\mu F(\mu,\cdot)$ has at most quadratic growth, uniformly in $\mu$, and
    \[F(\mu)-F(\nu)=\int_0^1\big\langle\delta_\mu F(\lambda\mu+(1-\lambda)\nu,\cdot),\mu-\nu\big\rangle\,\d\lambda\]
    holds for all $\mu,\nu\in\cM_2(\R^d)$.
    Moreover, we say $D_\mu F:\cM_2(\R^d)\times\R^d\to\R^d$ is an intrinsic derivative of $F$, if there exists a linear derivative $\delta_\mu F$ satisfying
    \[D_\mu F(\mu,x)=D_x\delta_\mu F(\mu,x)=\big(\partial_{x_i}\delta_\mu F(\mu,x)\big)_{i=1}^d\]
    for all $\mu\in\cM_2(\R^d),\,x=(x_1,...,x_d)\in\R^d$.

    Inductively, suppose $\delta_\mu F$ exists, and we say $\frac{\delta^2 F}{\delta\mu^2}\,(\text{or }\delta_\mu^2 F):\cM_2(\R^d)\times\R^d\times\R^d\to\R$ is a second-order linear derivative of $F$,
    if $\delta_\mu^2 F(\mu,\cdot,\cdot)$ has at most quadratic growth, uniformly in $\mu$, and $\delta_\mu^2 F(\cdot,x,\cdot)$ is a linear derivative of $\delta_\mu F(\cdot,x)$
    holds for all $x\in\R^d$.
    Moreover, we say $D_\mu^2 F:\cM_2(\R^d)\times\R^d\times\R^d\to\R^{d\times d}$
    if some $\delta_\mu^2 F$ exists and satisfies
    \[D_\mu^2 F(\mu,x,x^\prime)=D_x\prime D_x\delta_\mu^2 F(\mu,x,x^\prime)=\big(\partial_{x_i^\prime}\partial_{x_j}\delta_\mu^2 F(\mu,x,x^\prime)\big)_{i,j=1}^d\]
    for all $\mu\in\cM_2(\R^d),\,x=(x_1,...,x_d),x^\prime=(x_1^\prime,...,x_d^\prime)\in\R^d$.
\end{defn}
On the basis of the above definition, we can consequently define differentiability of functionals.
\begin{defn}[$\cC^2$ and $\cC_b^2$ functionals]\label{C^2} 
    For a functional $F:\cM_2(\R^d)\to\R$, we say $F\in\cC^2(\cM_2(\R^d))$, if
    \begin{itemize}
        \item derivatives $\frac{\delta F}{\delta\mu},\,\frac{\delta^2 F}{\delta\mu^2}$ and $D_{\mu}F,\,D_\mu^2 F$ exist in the sense of Definition~\ref{derivatives}, and are globally continuous;
        \item the Hessian matrix $D_x^2\frac{\delta F}{\delta\mu}:\cM_2(\R^d)\times\R^d\to\R^{d\times d}$ exists and is continuous.
    \end{itemize}
    Furthermore, we say $F\in\cC_b^2(\cM_2(\R^d))$, if $F\in\cC^2(\cM_2(\R^d))$ and all its derivatives and the Hessian matrix mentioned above are bounded.
\end{defn}

\begin{defn}[$\cC^{1,2}$ functional]\label{C^1,2}
    For a functional $F:[0,T]\times\cM_2(\R^d)\to\R$, we say $F\in\cC_b^{1,2}\big([0,T]\times\cM_2(\R^d)\big)$, if
    \begin{itemize}
        \item for each $t\in[0,T]$, $F(t,\cdot)\in\cC^2(\cM_2(\R^d))$;
        \item for each $\mu\in\cM_2(\R^d)$, $F(\cdot,\mu)\in\cC^1([0,T])$;
        \item all derivatives of $F$, with respect to $t$ of first order, and with respect to $\mu$ of first and second orders, including $D_x^2\delta_\mu F$ (see Definition~\ref{C^2}), are globally continuous on $[0,T]\times\cM_2(\R^d)$ (or $[0,T]\times\cM_2(\R^d)\times\R^d$, $[0,T]\times\cM_2(\R^d)\times\R^d\times\R^d$);
        \item $\delta_\mu F(t,\mu,\cdot),D_\mu F(t,\mu,\cdot)$, $\delta_\mu^2 F(t,\mu,\cdot,\cdot),D_\mu^2 F(t,\mu,\cdot,\cdot)$ and $D_x^2\delta_\mu F(t,\mu,\cdot)$ all have at most quadratic growth, uniformly in $(t,\mu)\in [0,T]\times\cM_2(\R^d)$.
    \end{itemize}
\end{defn}

\begin{rem}\label{remark_E_projection}
    In fact, \eqref{SDE_E_MeanField} and \eqref{SDE_E_particle} are simplified versions of the branching dynamics in \cite{claisse2024mckean},
    and we observe that $\mu_t=\Psi m_t ,\,t\in[0,T]$ for the projection 
    \begin{equation}\label{E_projection}
        \begin{aligned}
            \Psi:\cP_1(E)\,&\to \cM(\R^d)\\
            m\,&\mapsto\mu=\Psi m\quad\text{s.t.}\\
            \langle f,\mu\rangle=\,&\int_E \langle f,e\rangle\,m(\d e),\quad\forall f\in\cC_b(\R^d).
        \end{aligned}
    \end{equation}
    It is clear that $\Phi$ is surjective, and
    by Kantorovich duality (cf.~\cite{villani2008optimal}) we have
    \[\mathbf{d}(\Psi m,\Psi m^\prime)\leq\cW_1(m,m^\prime)\]
    for all $m,m^\prime\in\cP_1(E)$.
\end{rem}

\begin{rem}
    Definitions~\ref{derivatives}, \ref{C^2} and \ref{C^1,2} can be naturally expanded to encompass differentiable functionals on convex subsets of $\cM_2(\R^d)$ such as $\cM_p(\R^d)$ ($p\geq2$) and $\cP_2(\R^d)$, without changing the quadratic growth requirement.
\end{rem}

\begin{rem}
   Under certain regularity conditions of $F:\cP_2(\R^d)\to\R$, intrinsic derivatives of $F$ given by Definition~\ref{derivatives} are equivalent to the ``L-derivative'' derived from Fr{\'e}chet derivatives in \cite[Definitions 5.22]{carmona2018probabilistic} (cf.~\cite[Propositions 5.44, 5.48]{carmona2018probabilistic}).
\end{rem}

\subsection{The McKean--Vlasov branching diffusion}
\label{subsec:MKV_branch_diffus}

Let $T$ and $\bar{\gamma}$ be positive constants. 
We consider the filtered probability space $(\Omega,\cF,(\cF_t)_{t\geq 0},\dbP)$, equipped with a family of $\R^d$-valued standard Brownian motions $(W^k)_{k\in\mathbb{K}}$ and Poisson random measures $\big(Q^k(\d s,\d z)\big)_{k\in\mathbb{K}}$ on $[0,T]\times [0,\bar{\gamma}]\times[0,1]$ with Lebesgue intensity measure $\d s\d z$.
The random elements $(W^k)_{k\in\mathbb{K}}$ and $\big(Q^k(\d s,\d z)\big)_{k\in\mathbb{K}}$ are mutually independent.

\vspace{0.5em}

Let us first introduce our McKean--Vlasov branching diffusion process in a descriptive way, and then define it rigorously by the SDE \eqref{SDE_E_MeanField}.
The branching diffusion will be defined as an $E$-valued progressively measurable process $\bar{Z}=(\bar{Z}_t)_{t\in[0,T]}$, with
\begin{equation}\label{solution}
    \bar{Z}_t= \sum_{k\in K_t}\delta_{(k,X_t^k)}(\omega)\in E,\quad\forall t\in [0,T],
\end{equation}
where $K_t$ denotes the set of all labels of particles alive at $t$, and $X_t^k$ is the value of particle $k$ at time $t$.
For each particle $k\in K_t$, $X^k$ evolves as a diffusion governed by
\begin{equation}\label{branching_diffusion}
    \d X_t^k=b(t,X_t^k,\mu_t)\,\d t+\sigma(t,X_t^k,\mu_t)\,\d W_t^k,
\end{equation}
with coefficients $(b,\sigma):[0,T]\times\R^d\times\cM_2(\R^d)\to\R^d\times\R^{d\times d}$ and the environment measure $\mu_t\in\cM_2(\R^d)$, namely,
\begin{equation}\label{eq:environment}
    \langle f,\mu_t\rangle:=\Expect\Big[\sum\limits_{k\in K_t}f(X_t^k)\Big]
\end{equation}
for each bounded measurable function $f$.
Let us denote the infinitesimal generator by $\cL$, \ie,~for each $f\in\cC^2(\R^d),\,(t,x,\mu)\in[0,T]\times\R^2\times\cM_2(\R^d)$,
\begin{equation}\label{infititesimal}
    \cL f(t,x,\mu):=b(t,x,\mu)\cdot D_xf(x)+\frac{1}{2}\trace(D_x^2 f(x)\sigma\sigma^\transpose(t,x,\mu)).
\end{equation}

Let $(\gamma,(p_l)_{l\in\dbN}):[0,T]\times\R^d\times\cM_2(\R^d)\to [0,\bar{\gamma}]\times [0,1]^{\dbN}$ be the death rate and probability mass function of the progeny distribution for \eqref{solution}, where $\sum_{l\in\dbN} p_l\equiv1$.
Hence, $(p_l)_{l\in\dbN}$ naturally induces a partition of $[0,1]$, namely
\[I_l:=\Big[\sum_{i=0}^{l-1}p_l,\sum_{i=0}^{l}p_l\Big),\quad\forall l\in\dbN.\]
Denote by $S_k$ the birth time of each particle $k\in\mathbb{K}$, where in particular $S_k=0$ for $k\in K_0$.
After birth, $k$ evolves as the diffusion \eqref{branching_diffusion}, until its death time
\[T_k:=\inf\Big\{t>S_k:\,Q^k\big(\{t\}\times [0,\gamma(t,X_t^k,\mu_t)]\times[0,1]\big)=1\Big\}\]
Let $U^k$ be a random variable uniformly distributed on $[0,1]$, which satisfies
\[Q^k\big(\{T_k\}\times[0,\gamma(t,X_t^k,\mu_t)]\times\{U^k\}\big)=1.\]
At $T_k$, the particle $k$ gives birth to $l$ offspring particles after death, if $U^k\in I_l(T_k,X_{T_k}^k,\mu_{T_k})$.
Label these offspring particles as $\{k1,k2,...,kl\}$ (or $\emptyset$ if $l=0$),
and we have
\[K_{T_k}=(K_{T_{k-}}\setminus\{k\})\cup \{k1,k2,...,kl\}.\]
Moreover, each offspring starts from its ancestor position, \ie
\[X_{S_{ki}}^{ki}=X_{T_{k-}}^k,\quad \forall i=1,2,...,l,\]
and we extend its trajectory by $X_t^{ki}:=X_t^k$ for all $t\in[0,T_k)$.

\vspace{0.5em}

More rigorously, the McKean--Vlasov branching diffusion $\bar{Z}$ described above can be defined as solution to the SDE:
\begin{equation}\label{SDE_E_MeanField}
    \begin{aligned}
        &\langle \phi,\bar{Z}_t\rangle=\,\langle \phi,\bar{Z}_0\rangle+\int_0^t\sum_{k\in K_s}\cL\phi^k(s,X_s^k,\mu_s)\,\d s+\int_0^t\sum_{k\in K_s}D\phi^k(X_s^k)\cdot\sigma(s,X_s^k,\mu_s)\,\d W_s^k\\
        &\quad+\int_{[0,t]\times[0,\bar{\gamma}]\times[0,1]}\sum_{k\in K_{s-}}\sum_{l\in\dbN}\Big(\sum_{i=1}^l \phi^{ki}-\phi^k\Big)(X_s^k)\mathbbm{1}_{[0,\gamma(s,X_s^k,\mu_s)]\times I_l(s,X_s^k,\mu_s)}(z)\,Q^k(\d s,\d z)
    \end{aligned}
\end{equation}
for each $\phi=(\phi^k)_{k\in\mathbb{K}}$ such that $\phi^k\in\cC_b^2(\R^d)$ for all $k\in\mathbb{K}$, where $(\mu_s)_{s \in [0,T]}$ is given by \eqref{eq:environment}.

\paragraph{Finite-population branching process}
At the same time, we can analogously define a finite-population particle system $\{Z^i=(Z^i_t)_{t\in[0,T]}\}_{i=1}^N$ on the filtered probability space
$(\Omega^N,\cF^N,(\cF_t)_{t\geq 0}^N,\dbP^N)$
by
\begin{equation}\label{solution_particle}
    Z_t^i=Z_t^i(\omega)=\sum_{k\in K_t^i}\delta_{(k,X_t^{i,k})}(\omega)\in E,
\end{equation}
which satisfies
\begin{equation}\label{SDE_E_particle}
    \begin{aligned}
        &\langle \phi,Z_t^i\rangle=\,\langle \phi,Z_0^i\rangle+\int_0^t\sum_{k\in K_s^i}\cL\phi^k(s,X_s^{i,k},\mu_s^N)\,\d s+\int_0^t\sum_{k\in K_s^i}D\phi^k(X_s^{i,k})\cdot\sigma(s,X_s^{i,k},\mu_s^N)\,\d W_s^{i,k}\\
        &\quad+\int_{[0,t]\times[0,\bar{\gamma}]\times[0,1]}\sum_{k\in K_{s-}^i}\sum_{l\in\dbN}\Big(\sum_{j=1}^l \phi^{kj}-\phi^k\Big)(X_s^{i,k})\mathbbm{1}_{[0,\gamma(s,X_s^{i,k},\mu_{s-}^N)]\times I_l(s,X_s^{i,k},\mu_{s-}^N)}(z)\,Q^{i,k}(\d s,\d z),
    \end{aligned}
\end{equation}
for each $i=1,...,N$, $\phi=(\phi^k)_{k\in\mathbb{K}}$ such that $\phi^k\in\cC_b^2(\R^d)$ for all $k\in\mathbb{K}$.
Here $\{Q^{i,k},W^{i,k}\}_{1\leq i\leq N,k\in \mathbb{K}}$ is a mutually independent group of Poisson random measures $\{Q^{i,k}\}_{1\leq i\leq N,k\in \mathbb{K}}$ with Lebesgue intensity measures on $[0,T]\times [0,\bar{\gamma}]\times[0,1]$ and standard Brownian motions $\{W^{i,k}\}_{1\leq i\leq N,k\in \mathbb{K}}$,
while $\mu_t^N$ denotes the empirical measure for the particle system \eqref{solution_particle}, namely,
\begin{equation}\label{eq:empirical}
    \langle f,\mu_t^N\rangle:=\frac{1}{N}\sum\limits_{i=1}^N\sum\limits_{k\in K_t^i}f(X_t^{i,k})
\end{equation}
for each bounded measurable function $f$.

\vspace{0.5em}

Let us first introduce some technical conditions on the coefficients to ensures the well-posedness of $\bar{Z}$ and $\{Z^i\}_{i=1}^N$ in \eqref{SDE_E_MeanField} and \eqref{SDE_E_particle}.

\begin{assu}\label{assum:coefficients}
    Let $(b,\sigma,\gamma,(p_l)_{l\in\N}):[0,T]\times\R^d\times\cM_2(\R^d)\to\R^d\times\R^{d\times d}\times[0,\bar{\gamma}]\times[0,1]^{\N}$ be the coefficients of the branching diffusion defined in Subsection~\ref{preliminary}, and denote by $c(t,x,\mu):=\gamma(\sum_l lp_l-1)(t,x,\mu)$.
    Suppose it holds:
    
    \emph{(i)} there exists a constant $M\in(0,\infty)$ such that
    \[\norm{b}_{\infty},\,\norm{\sigma}_{\infty},\,\norm{\sum\limits_{l\in\dbN} l p_l}_{\infty}\leq M;\]
    
    \emph{(ii)} there exists a constant $L\in (0,\infty)$ such that 
    \[
    \begin{aligned}
        &\quad\abs{b(t,x,\mu)-b(t,y,\nu)},\,\norm{\sigma (t,x,\mu)-\sigma (t,y,\nu)},\,\abs{c(t,x,\mu)-c(t,y,\nu)} \\
        &\leq\,L\big(\abs{x-y}+\mathbf{d}(\mu,\nu)\big),
    \end{aligned}\]
    for all $(t,x,\mu)\in[0,T]\times\R^d\times\cM_2(\R^d)$.
\end{assu}

\begin{assu}\label{assum:initial}
    	The initial conditions $\bar{Z}_0$ and $\{Z_0^i\}_{i=1}^N$ for \eqref{SDE_E_MeanField} and \eqref{SDE_E_particle} satisfy that
	$\mu_0\in\cM_2(\R^d)$, where $\mu_0$ is defined by \eqref{eq:environment}.
	Moreover, $\bar{Z}_0, Z_0^1,...,Z_0^N$ are i.i.d. $E$-valued random variables.
\end{assu}

\begin{prop}
	Let Assumptions~\ref{assum:coefficients} and \ref{assum:initial} hold true.
	Then both branching SDE  \eqref{SDE_E_MeanField} and \eqref{SDE_E_particle} have a unique strong solution denoted respectively by $\bar Z$ and $\{ Z^i \}^N_{i=1}$.
\end{prop}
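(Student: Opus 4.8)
The plan is to obtain both assertions by reducing them to the general well-posedness theory for McKean--Vlasov branching diffusions developed in \cite{claisse2024mckean}, of which \eqref{SDE_E_MeanField} and \eqref{SDE_E_particle} are special cases (cf.\ Remark \ref{remark_E_projection}). The only point that requires attention is that the coefficients $(b,\sigma,\gamma,(p_l)_{l\in\N})$, which here depend on the $\cM_2(\R^d)$-valued environment, satisfy the hypotheses imposed in that reference once the environment is expressed through the law $m_t=\Law(\bar Z_t)\in\cP_1(E)$: by Remark \ref{remark_E_projection}, the projection $\Psi$ is $1$-Lipschitz from $(\cP_1(E),\cW_1)$ to $(\cM(\R^d),\mathbf d)$, so the Lipschitz-in-$\mathbf d$ bounds of Assumption \ref{assum:coefficients}(ii) on $b$, $\sigma$ and $c:=\gamma\big(\sum_l lp_l-1\big)$ upgrade to Lipschitz-in-$\cW_1$ bounds for the composed coefficients; combined with the uniform bounds of Assumption \ref{assum:coefficients}(i) and the integrability of the initial configuration guaranteed by Assumption \ref{assum:initial} (so that $\Expect[\langle 1,\bar Z_0\rangle]<\infty$ and $\int|x|^2\,\mu_0(\d x)<\infty$), all the standing assumptions of \cite{claisse2024mckean} hold and yield a unique strong solution of \eqref{SDE_E_MeanField}. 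For the $N$-particle system, the tuple $(Z^1,\dots,Z^N)$ solves a branching SDE on $E^N$ in which the environment $\mu^N$ is an explicit globally Lipschitz functional of the current configuration, with no law-dependence at all, so it is covered by the same --- and in fact simpler --- argument.

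For completeness I would also indicate how the core of that argument runs directly. First fix a deterministic environment flow $(\nu_t)_{t\in[0,T]}\in C([0,T],\cM_2(\R^d))$ and consider the \emph{linear} branching SDE obtained by replacing $\mu_t$ by $\nu_t$ in \eqref{SDE_E_MeanField}. Because the death rate is bounded by $\bar\gamma$ and the mean offspring number by $M$, the expected population $\Expect[\langle 1,\bar Z_t\rangle]$ satisfies a linear Gr\"onwall inequality and is thus finite on $[0,T]$; hence so is the expected number of branching events (whose instantaneous rate is at most $\bar\gamma\langle 1,\bar Z_t\rangle$), and the branching process is a.s.\ non-explosive on $[0,T]$. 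In particular $[0,T]$ a.s.\ contains only finitely many branching times $0=\tau_0<\tau_1<\dots$, each a stopping time read off from the Poisson measures $(Q^k)$; on each interval $[\tau_j,\tau_{j+1})$ the finitely many live particles solve an ordinary SDE system with coefficients Lipschitz in the joint state (Assumption \ref{assum:coefficients}), which has a unique strong solution there, and applying the branching rule at $\tau_{j+1}$ and concatenating over the a.s.\ finitely many events produces a unique strong solution $\bar Z^\nu$.

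The McKean--Vlasov solution is then recovered as a fixed point of the map $\nu\mapsto\mu^\nu$, $\langle f,\mu^\nu_t\rangle:=\Expect\big[\langle f,\bar Z^\nu_t\rangle\big]$, on the complete space of flows in $C([0,T],\cM_2(\R^d))$ with total mass bounded by $e^{\bar\gamma M T}\langle 1,\mu_0\rangle$, endowed with the uniform $\mathbf d$-distance. The contraction estimate compares $\bar Z^\nu$ and $\bar Z^{\nu'}$ driven by the same noise, under the coupling in which each label uses the same Brownian motion and Poisson measure for both environments: one bounds $\Expect\big|X^{\nu,k}_t-X^{\nu',k}_t\big|$ by Gr\"onwall using the Lipschitz bounds on $b,\sigma$, and bounds the label discrepancy $\Expect\big[\#(K^\nu_t\triangle K^{\nu'}_t)\big]$ by noting that a birth/death event present in one tree but not the other is triggered at instantaneous rate at most $\big|c(s,X^\nu_s,\nu_s)-c(s,X^{\nu'}_s,\nu'_s)\big|\le L\big(|X^\nu_s-X^{\nu'}_s|+\mathbf d(\nu_s,\nu'_s)\big)$; a Gr\"onwall argument then gives $\sup_{s\le t}\mathbf d(\mu^\nu_s,\mu^{\nu'}_s)\le C\int_0^t\sup_{r\le s}\mathbf d(\nu_r,\nu'_r)\,\d s$, hence a strict contraction on a short interval and, by iterating over sub-intervals, a unique strong solution on all of $[0,T]$. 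The $N$-particle well-posedness follows identically with $\nu$ replaced throughout by the (now random, but adapted) empirical measure, which is again admissible since it is a Lipschitz functional of the configuration.

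I expect the one genuinely branching-specific difficulty to be this last coupling estimate: since the very set of live particles depends on the environment, one must control positions and topology of the two random genealogical trees simultaneously, which is exactly why the hypotheses are phrased for the combined coefficient $c=\gamma(\sum_l lp_l-1)$ rather than for $\gamma$ and $(p_l)$ separately; the remaining ingredients (non-explosion, inter-event SDE theory, the a priori mass bound, and the $1$-Lipschitz reduction via $\Psi$) are routine.
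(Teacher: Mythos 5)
Your proposal follows essentially the same route as the paper: the well-posedness of \eqref{SDE_E_MeanField} is obtained by reduction to the general results of \cite{claisse2024mckean} (with the hypotheses there verified via the $1$-Lipschitz projection $\Psi$ of Remark \ref{remark_E_projection}), and \eqref{SDE_E_particle} is handled by noting that the empirical environment is a Lipschitz function of the finite configuration, so a direct constructive argument applies. Your additional sketch of the non-explosion, inter-event SDE, and fixed-point/coupling steps merely unpacks the content of the cited theorems and is consistent with them.
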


The well-posedness of McKean--Vlasov branching SDE \eqref{SDE_E_MeanField}  can be ensured by ~\cite[Theorems 2.3, 2.8; Remark 2.8]{claisse2024mckean}.
As for finite-population branching SDE \eqref{SDE_E_particle}, since $(x, x_1, \cdots, x_n) \mapsto (b, \sigma) (x, \sum_{i=1}^n \delta_{x_i})$ is clearly Lipschitz under Assumption \ref{assum:coefficients}, one can use the constructive way to obtain the unique process $\{ Z^i \}^N_{i=1}$ as strong solution to \eqref{SDE_E_particle}.

\begin{rem}
    Under Assumptions~\ref{assum:coefficients} and \ref{assum:initial}, $\mu_t,\mu_t^N\in\cM_2(\R^d),\,t\in[0,T]$ by \cite[Lemma 2.5]{claisse2025optimal},
    and it similarly holds, for each $p\geq 2$, 
    that $\mu_t,\mu_t^N\in\cM_p(\R^d),\,t\in[0,T]$ if $\mu_0\in\cM_p(\R^d)$.
    On the other hand, by \cite[Lemma 3.1]{claisse2024mckean} we have
    \begin{equation}\label{mass_growth}
        \sup_{r\in[t,s]}\Expect[\# K_r]\leq \Expect[\# K_t]\exp\big(\bar{\gamma}M(s-t)\big)
    \end{equation}
    for each $0\leq t<s\leq T$, and thus $m_t\in \cP_1(E)$ for all $t\in[0,T]$.
\end{rem}

\subsection{Quantitative weak propagation of chaos}\label{main_theorems}

	Our main objective is to provide a convergence rate for the approximation of $\mu_T$ in \eqref{eq:environment} with empirical measure $\mu_T^N$ defined in \eqref{eq:empirical}.

\vspace{0.5em}

	To achieve this, let us first provide an It\^o's formula for functional along the flow of measures $(\mu_t)_{t\in[0,T]}$ as well as that of the empirical measures $(\mu_t^N)_{t\in[0,T]}$.
	Notice that the measures $(\mu_t)_{t \in [0,T]}$ are not necessary probability measures, so that the corresponding It\^o's formula consists in an extension of the It\^o's formula for semi-martingale marginal distributions in e.g. \cite{10.1214/15-AOP1076}.
	
\begin{thm}[It\^o's formula]\label{ito_formula}
    Let Assumptions \ref{assum:coefficients} and \ref{assum:initial} hold, and $\mu_t$ be defined by \eqref{eq:environment} and $\mu_t^N$ be the empirical measure in \eqref{eq:empirical}.
    For a functional $F\in\cC^{1,2}\big([0,T]\times\cM_2(\R^d)\big)$ (recall Definitions~\ref{C^2} and \ref{C^1,2}), we have (recall \eqref{infititesimal}), for $t\in[0,T]$,
    \begin{align}
        \label{ito_environment}&\begin{aligned}
            \d F(t,\mu_t)=\,\bigg(\partial_t F(t,\mu_t)+\Big\langle \cL \frac{\delta F}{\delta \mu}+\gamma\Big(\sum\limits_l lp_l-1\Big)\frac{\delta F}{\delta \mu} ,\mu_t\Big\rangle\bigg)\,\d t
        \end{aligned}\\
        \label{ito_empirical}&\begin{aligned}
        \d F(t,\mu_t^N)=& \,\bigg(\partial_t F(t,\mu_t^N)+\Big\langle \cL \frac{\delta F}{\delta \mu},\mu_t^N\Big\rangle\bigg)\,\d t\\
            &\,+\frac{\d t}{2N}\int_{\R^d}\trace\big(D_{\mu}^2 F(t,\mu_t^N,x,x)\sigma\sigma^\transpose(t,x,\mu_t^N)\big)\,\mu_t^N(\d x)\\
            &\,+N\,\d t\int_{\R^d}\gamma\sum\limits_{l}p_l(t,x,\mu_t^N)\Big(F\big(t,\mu_t^N+\frac{l-1}{N}\delta_x\big)-F\big(t,\mu_t^N\big)\Big)\,\mu_t^N(\d x)\\
            &\,+\d \mathbf{M}_t,
        \end{aligned}
    \end{align}
    where $\mathbf{M}$ is a martingale.
    Here we write, for simplicity,
    \[\langle g,\nu\rangle:=\int_{\R^d} g(t,x,\nu)\,\nu(\d x)\]
    for $(t,\nu)\in[0,T]\times\cM_2(\R^d)$ and $g:[0,T]\times\R^d\times\cM_2(\R^d)\to\R$.
\end{thm}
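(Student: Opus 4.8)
The plan is to derive both formulas from the SDE characterizations \eqref{SDE_E_MeanField} and \eqref{SDE_E_particle} by choosing suitable test functions $\phi$ and passing through a discretization/limiting argument in the measure argument of $F$. For \eqref{ito_empirical}, note that $\mu^N_t = \frac{1}{N}\sum_{i=1}^N Z^i_t$ (identifying $Z^i_t \in E$ with its spatial projection in $\cM(\R^d)$), so $F(t,\mu^N_t)$ is a functional of the $E^N$-valued jump-diffusion $(Z^1_t,\dots,Z^N_t)$. First I would fix a partition $0=t_0<\dots<t_n=t$ and write the increment $F(t,\mu^N_t)-F(0,\mu^N_0)$ as a telescoping sum over subintervals, splitting each term into (a) the time increment $F(t_{j+1},\mu^N_{t_j})-F(t_j,\mu^N_{t_j})$, handled by $\cC^1$-regularity in $t$, and (b) the measure increment $F(t_{j+1},\mu^N_{t_{j+1}})-F(t_{j+1},\mu^N_{t_j})$. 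The measure increment decomposes further according to whether, on $(t_j,t_{j+1}]$, a branching event occurred: on the no-jump event the spatial configuration moves by the diffusion \eqref{branching_diffusion}, and a second-order Taylor expansion via the linear derivative $\delta_\mu F$ together with standard It\^o's formula for the finitely many diffusing particles produces the drift term $\langle \cL \frac{\delta F}{\delta\mu},\mu^N_t\rangle\,\d t$, the $\frac{1}{2N}$-trace correction term (the $\frac1N$ coming from $\mu^N=\frac1N\sum\delta$, so that $\delta_x$ perturbations have size $1/N$ and the second-order term picks up $D^2_\mu F$), and a Brownian martingale term; on the jump event, particle $k$ dies and is replaced by $l$ copies at the same location, i.e. $\mu^N$ jumps by $\frac{l-1}{N}\delta_{X^{i,k}}$, and compensating the associated Poisson random measure with intensity $\gamma\,p_l\,\mu^N(\d x)\,\d t$ (scaled by $N$ because of the $\frac1N$ normalization of $\mu^N$) yields the last finite-difference term plus a further martingale contribution. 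Collecting the two martingale pieces into $\mathbf{M}$, and sending the mesh to zero using the uniform continuity of all derivatives (Definition~\ref{C^1,2}) and the moment bound \eqref{mass_growth}, gives \eqref{ito_empirical}.

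For \eqref{ito_environment}, the measure $\mu_t$ is deterministic — it is the expectation in \eqref{eq:environment} — so $F(t,\mu_t)$ is a deterministic function of $t$ and the formula should contain only a $\d t$ term with no martingale and no second-order trace term. The cleanest route is to take expectations in (a regularized version of) the computation above applied to $\mu^N$ and then let $N\to\infty$: the martingale has zero expectation, the $\frac{1}{2N}$-term vanishes, and the finite-difference jump term, after a first-order expansion $F(t,\mu+\frac{l-1}{N}\delta_x)-F(t,\mu)\approx \frac{l-1}{N}\langle \delta_\mu F(t,\mu,\cdot),\delta_x\rangle$, converges (using $\sum_l l p_l$ bounded and $N\cdot\frac{l-1}{N}=l-1$) to $\langle \gamma(\sum_l l p_l - 1)\frac{\delta F}{\delta\mu},\mu_t\rangle$, while $\mu^N_t\to\mu_t$ in $\mathbf{d}$ by the law of large numbers / propagation of chaos. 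Alternatively and more directly, one can use the weak form of the Fokker--Planck equation for $(\mu_t)$: testing \eqref{SDE_E_MeanField} against time-independent $\phi$ and taking expectations gives $\langle \phi,\mu_t\rangle = \langle\phi,\mu_0\rangle + \int_0^t \langle \cL\phi + \gamma(\sum_l lp_l -1)\phi,\mu_s\rangle\,\d s$, and then a chain-rule argument along the linear derivative — approximating $F$ along the linear interpolation between $\mu_s$ and $\mu_{s+h}$ and using that $\langle\delta_\mu F(\lambda\mu_s+(1-\lambda)\mu_{s+h},\cdot),\mu_{s+h}-\mu_s\rangle$ is controlled by the Fokker--Planck increment — yields \eqref{ito_environment}. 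I would present this second route as the proof of \eqref{ito_environment} since it avoids the $N\to\infty$ passage.

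The main obstacle I anticipate is the rigorous control of the second-order Taylor remainder in the measure variable for the empirical case: one expands $F(t,\mu^N+\nu) - F(t,\mu^N)$ to second order in $\nu$ where $\nu$ is itself a sum of small ($O(1/N)$) Dirac perturbations coming from the diffusive displacements of all live particles, and the number of such particles $\# K^N_t$ is random and only controlled in expectation via \eqref{mass_growth}. Making the bookkeeping precise — showing that the cross terms between different particles' displacements contribute exactly the $D^2_\mu F$ trace term in the limit, that the higher-order remainders are negligible, and that everything is uniformly integrable so that the martingale terms are genuine martingales rather than merely local ones — requires careful use of the quadratic-growth bounds and boundedness of $\sigma$, together with a localization/stopping-time argument on $\# K^N_t$ that is removed at the end using \eqref{mass_growth}. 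A secondary technical point is handling the interplay between the diffusive part and the jump part within a single subinterval (the probability of a jump is $O(t_{j+1}-t_j)$, so mixed contributions are higher order), which is routine but must be stated. The full details are deferred to Appendix~\ref{append_ito}.
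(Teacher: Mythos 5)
Your overall strategy is sound and, for \eqref{ito_environment}, your preferred second route is essentially the paper's proof: the paper tests \eqref{SDE_E_MeanField} to get the weak Fokker--Planck identity \eqref{environment_integral}, telescopes $F$ over a partition of $[0,t]$ using the definition of the linear derivative along the interpolation $\lambda\mu_{t_i}+(1-\lambda)\mu_{t_{i-1}}$, and passes to the limit by dominated convergence. One ingredient you leave implicit is that this limit requires continuity of $t\mapsto\mu_t$ in $\mathbf{d}$ (so that the measure argument of $\delta_\mu F$ converges); the paper proves a $\tfrac12$-H\"older bound (Lemma~\ref{time_continuity}) precisely for this purpose, and you should include such a lemma. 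Also note that your discarded first route (taking expectations in \eqref{ito_empirical} and sending $N\to\infty$) would be circular here, since $F$ is nonlinear in the measure and the required convergence $\Expect[F(t,\mu^N_t)]\to F(t,\mu_t)$ is of the same nature as the propagation-of-chaos statement the paper is ultimately proving.

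For \eqref{ito_empirical} your decomposition into diffusive motion between branching events plus compensated Poisson jumps is the right structure and yields the correct terms, but the mesh-limit/second-order-Taylor framework you build around it, together with the remainder, cross-term and localization issues you flag as the main obstacle, is heavier than necessary and is not how the paper proceeds. The paper decomposes $[0,t]$ at the (a.s.\ finitely many, a.s.\ non-simultaneous) branching times $s_m$; between consecutive jumps the label sets $K^i$ are frozen, so $F(s,\mu^N_s)=F^{K_{s_m}}\big(s,(X^{i,k}_s)\big)$ \emph{exactly}, where $F^{K_{s_m}}$ is the finite-dimensional projection of $F$, whose partial derivatives are computed exactly in terms of $D_\mu F$, $D_x^2\delta_\mu F$ and $D^2_\mu F$ with factors $1/N$ and $1/N^2$; the classical It\^o formula then gives \eqref{ito_nojump} with no Taylor remainder to control, the $\tfrac12 D_x^2\delta_\mu F$ contribution recombining with the first-order drift into $\langle\cL\tfrac{\delta F}{\delta\mu},\mu^N_s\rangle$ and the $1/N^2$ diagonal terms giving the $\tfrac{1}{2N}$ trace correction. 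At a jump time, a.s.\ exactly one particle branches, so the jump of $F$ is exactly $F\big(s,\mu^N_{s-}+\tfrac{l-1}{N}\delta_{X^{i,k}_s}\big)-F(s,\mu^N_{s-})$ summed against $Q^{i,k}$ (equation \eqref{ito_jump}), and compensating $Q^{i,k}(\d s,\d z)$ by $\d s\,\d z$ produces the $N\,\d t\int\gamma\sum_l p_l(\cdots)$ drift plus the jump martingale. In this formulation the quadratic-growth/boundedness hypotheses of Definition~\ref{C^1,2} together with \eqref{mass_growth} suffice to see the martingale terms are genuine martingales, and no stopping-time removal or cross-particle remainder bookkeeping is needed. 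I recommend you restructure your empirical-measure argument along these lines: what you gain is that every identity before taking limits is exact, so the ``main obstacle'' you anticipate disappears.
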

	The proof will be completed in Section~\ref{append_ito} in Appendix.

\vspace{0.5em}

Under Assumption \ref{assum:coefficients}, we recall (see Remark \ref{remark_E_projection}) 
\[m_t=\text{Law}(\bar{Z}_t)\in \Psi^{-1}(\cM_2(\R^d))\subset\cP_1(E),\quad t\in[0,T]\]
for each $t\in[0,T]$, which naturally induces
\begin{equation}\label{law_flow}
    m_s^{t,m}:=\text{Law}(\bar{Z}_s|\bar{Z}_t\sim m)
\end{equation}
for each $0\leq t\leq s\leq T,\,m\in\Psi^{-1}(\cM_2(\R^d))$, and satisfies the flow property, namely,
\[m_s=m_s^{t,m_t},\, m_s^{r,m_r^{t,m}}=m_s^{t,m} \quad 0\leq t\leq r\leq s\leq T,\, m\in\Psi^{-1}(\cM_2(\R^d)).\]
For the environment measure $\mu_t=\Psi m_t$, however, such flow is non-trivial.
For instance, for $m,\hat{m}\in\cP_1(E)$ such that $\Psi m=\Psi\hat{m}\in\cM_2(\R^d)$, we do not typically have
\[\Psi m_s^{t,m}=\Psi m_s^{t,\hat{m}}.\]

To recover the flow property on $\mu_t = \Psi m_t$, we will rely on a uniqueness result of the solution to a nonlinear Fokker--Planck equation (c.f.~Section~\ref{append_flow}),
which requires the following non-degeneracy condition of the diffusion coefficient.
\begin{assu}\label{assum:nondegen}
    Let $\sigma:[0,T]\times\R^d\times\cM_2(\R^d)\to\R^{d\times d}$ be the coefficient of the branching diffusion as defined in Subsection~\ref{preliminary}.
    Suppose $\sigma$ is uniformly non-degenerated in the sense that
    \[\sigma\sigma^\transpose (t,x,\mu)\geq \varepsilon_0 I_d,\quad\forall (t,x,\mu)\in [0,T]\times\R^d\times\cM_2(\R^d)\]
    for some constant $\varepsilon_0>0$.
\end{assu}

The proof of weak propagation of chaos relies on analysis of a Backward Kolmogorov equation \eqref{eq:bKol} as well as the functional derivatives of the value function.
Hence, we impose certain regularity on coefficients $b,\sigma,c$ and the test functional $G$ as follows.
\begin{assu}\label{assum:differentiability}
    Let $b=(b_i)_{i=1}^d$, $\sigma=(\sigma_{ij})_{i,j=1}^d$ and $c$ be the coefficients defined above (recall Assumption~\ref{assum:coefficients}).
    For each $1\leq i\leq j\leq d$, suppose it holds: 

    \emph{(i)} $b_i(t,x,\cdot),\,\sigma_{ij}(t,x,\cdot),\, c(t,x,\cdot)$
    belong to $\cC_b^{1,2}$ for all $(t,x)\in[0,T]\times\R^d$;

    \emph{(ii)} $b_i(t,\cdot,\mu),\,\sigma_{ij}(t,\cdot,\mu),\, c(t,\cdot,\mu)$ belong to $\cC_b^{2}(\R^d)$ for all $(t,\mu)\in[0,T]\times\cM_2(\R^d)$;

    \emph{(iii)} all derivatives of $b_i,\,\sigma_{ij},\, c$ up to second-order are bounded and Lipschitz continuous on $[0,T]\times\R^d\times\cM_2(\R^d)$ (or $[0,T]\times\R^d\times\cM_2(\R^d)\times\R^d$, $[0,T]\times\R^d\times\cM_2(\R^d)\times\R^d\times\R^d$).

    We also assume the test functional $G:\cM_2(\R^d)\to\R$ is $\cC_b^2$, 
    with its derivatives $\frac{\delta G}{\delta\mu},\,\frac{\delta^2 G}{\delta\mu^2}$, $D_{\mu}G,\,D_\mu^2 G$ and $D_x^2\frac{\delta G}{\delta\mu}$ globally Lipschitz continuous.
\end{assu}

Finally, we state the main theorem of quantitative weak propagation of chaos for branching diffusions.
\begin{thm}[Weak propagation of chaos]\label{thm_WPoC}
    Suppose Assumptions \ref{assum:coefficients}, \ref{assum:initial}, \ref{assum:nondegen} and \ref{assum:differentiability} hold,
    and the initial value of \eqref{solution} satisfies (recall Definition~\ref{density_regularity})
    \[\Expect[(\# K_0)^2]<\infty,\quad \mu_0\in\cM_{2+\delta}^\theta(\R^d)\]
    for some $\delta,\theta>0$.
    Then the empirical measure $\mu_T^N$ in \eqref{eq:empirical} converges weakly to the environment measure $\mu_T$ \eqref{eq:environment} as $N\to\infty$, in the sense that 
    \[\abs{G(\mu_T)-\Expect[G(\mu_T^N)]}\leq C\frac{1}{N},\]
    where $C>0$ is a constant independent of $N$.
\end{thm}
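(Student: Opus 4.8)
The plan is to follow the standard strategy for quantitative weak propagation of chaos via the backward Kolmogorov (master) equation, adapted to the branching setting. First I would introduce the value function $V(t,\mu):=\Expect\big[G(\mu_T^{t,\mu})\big]$, where $\mu_s^{t,\mu}=\Psi m_s^{t,m}$ for any $m$ with $\Psi m=\mu$; the crucial point — here is where Assumption~\ref{assum:nondegen} enters — is that, thanks to the uniqueness of the nonlinear Fokker--Planck equation established in Section~\ref{append_flow} (relying on \cite{claisse2025optimal}), this is well-defined independently of the lift $m$, so the flow property descends to $(\mu_s^{t,\mu})$ on $\cM_2(\R^d)$. By construction $V(T,\mu)=G(\mu)$ and $V(0,\mu_0)=G(\mu_T)$.

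Next I would establish that $V\in\cC_b^{1,2}\big([0,T]\times\cM_2(\R^d)\big)$ together with Lipschitz bounds on its first and second linear/intrinsic derivatives. The mechanism is the lifting \eqref{lifted_MeanField}: the branching dynamic on $\R^d$ is recast as a classical McKean--Vlasov SDE on $\R^{d+1}$ (the extra coordinate tracking mass/lineage), so that the regularity theory of \cite{10.1214/15-AOP1076} for the decoupling field of classical McKean--Vlasov equations applies directly, using Assumption~\ref{assum:differentiability} (boundedness and Lipschitz continuity of $b,\sigma,c$ and their derivatives up to second order) and the non-degeneracy from Assumption~\ref{assum:nondegen}. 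This yields that $V$ solves the backward Kolmogorov equation
\[
\partial_t V(t,\mu)+\Big\langle \cL\tfrac{\delta V}{\delta\mu}+\gamma\big(\textstyle\sum_l lp_l-1\big)\tfrac{\delta V}{\delta\mu},\mu\Big\rangle=0,\qquad V(T,\cdot)=G,
\]
i.e. \eqref{eq:bKol}, with $\frac{\delta V}{\delta\mu}$, $\frac{\delta^2 V}{\delta\mu^2}$, $D_\mu V$, $D_\mu^2 V$, $D_x^2\frac{\delta V}{\delta\mu}$ all bounded uniformly in $(t,\mu)$ (and the second-order ones Lipschitz), with constants depending on $T$ and the data but not on $N$.

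Then I would apply It\^o's formula (Theorem~\ref{ito_formula}) to $t\mapsto V(t,\mu_t^N)$ on $[0,T]$. Using that $V$ kills its own generator in the first-order terms via \eqref{eq:bKol}, the drift in \eqref{ito_empirical} collapses to exactly the two discrepancy terms between the branching master operator and its finite-$N$ analogue: (a) the second-order trace term $\frac{1}{2N}\int \trace\big(D_\mu^2 V(t,\mu_t^N,x,x)\,\sigma\sigma^\transpose(t,x,\mu_t^N)\big)\,\mu_t^N(\d x)$, which is $O(1/N)$ since $D_\mu^2 V$ and $\sigma$ are bounded and $\sup_t\Expect[\#K_t^{(N)}]\le C$ by \eqref{mass_growth} (hence $\Expect[\mu_t^N(\R^d)]\le C$); and (b) the branching discrepancy $N\int \gamma\sum_l p_l\big(V(t,\mu_t^N+\tfrac{l-1}{N}\delta_x)-V(t,\mu_t^N)\big)\,\mu_t^N(\d x)$ minus the corresponding $\langle\gamma(\sum_l lp_l-1)\frac{\delta V}{\delta\mu},\mu_t^N\rangle$ term from the master operator. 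For (b) I would Taylor expand $V(t,\mu_t^N+\tfrac{l-1}{N}\delta_x)$ to second order in the linear derivative: the first-order term $\frac{l-1}{N}\frac{\delta V}{\delta\mu}(t,\mu_t^N,x)$ reconstructs exactly $\langle\gamma(\sum_l lp_l-1)\frac{\delta V}{\delta\mu},\mu_t^N\rangle$ after multiplying by $N$ and integrating, cancelling that contribution, and the remainder is $\frac{(l-1)^2}{2N^2}\frac{\delta^2 V}{\delta\mu^2}(t,\cdots)$ evaluated along the segment; multiplied by $N$ this is $O(1/N)$ using $\|\frac{\delta^2 V}{\delta\mu^2}\|_\infty<\infty$, $\|\sum_l lp_l\|_\infty\le M$, $\|\gamma\|_\infty\le\bar\gamma$, and $\Expect[\mu_t^N(\R^d)]\le C$. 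Taking expectations (the martingale $\mathbf{M}$ vanishes), integrating in $t$, and using $\Expect[V(T,\mu_T^N)]=\Expect[G(\mu_T^N)]$ and $V(0,\mu_0)=G(\mu_T)$ gives $|G(\mu_T)-\Expect[G(\mu_T^N)]|\le C/N$.

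The main obstacle is the $\cC_b^{1,2}$ regularity of $V$ with $N$-independent bounds on its second-order derivatives — specifically, making the lifting argument rigorous: verifying that the $\R^{d+1}$ reformulation \eqref{lifted_MeanField} genuinely satisfies the hypotheses of \cite{10.1214/15-AOP1076} (in particular that the induced coefficients on $\R^{d+1}$ inherit the required smoothness and non-degeneracy, and that the lifted measures live in the right $\cM_{2+\delta}^\theta$-type class so the well-posedness/uniqueness of Section~\ref{append_flow} applies), and then transferring the derivative estimates back to $V$ on $\cM_2(\R^d)$. The moment condition $\Expect[(\#K_0)^2]<\infty$ and $\mu_0\in\cM_{2+\delta}^\theta(\R^d)$ are needed precisely to propagate these moments along the flow and to control the Taylor remainders uniformly. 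Everything downstream of the regularity of $V$ is a fairly mechanical It\^o-plus-Taylor computation.
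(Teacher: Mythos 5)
Your overall route is the same as the paper's: a value function built from the flow of marginal measures (made well defined through the uniqueness of the nonlinear Fokker--Planck equation), regularity obtained by recasting the branching dynamics as a non-branching McKean--Vlasov SDE on $\R^{d+1}$ so that the estimates of \cite{10.1214/15-AOP1076} apply, the backward Kolmogorov equation \eqref{eq:bKol}, It\^o's formula \eqref{ito_empirical} along $\mu_t^N$, and a second-order Taylor expansion of the jump term whose first-order part cancels the branching part of the master operator while the remainder is $O(1/N)$ by the boundedness of $\delta_\mu^2 U$. This is precisely the strategy of Subsections~\ref{subsec:bkol}--\ref{proof_main}.

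There is, however, a genuine gap at the initial condition. Applying It\^o's formula to $t\mapsto U(t,\mu_t^N)$ on $[0,T]$ and taking expectations compares $\Expect[G(\mu_T^N)]=\Expect[U(T,\mu_T^N)]$ with $\Expect[U(0,\mu_0^N)]$, not with $U(0,\mu_0)=G(\mu_T)$; your concluding sentence silently identifies the two. Since $U(0,\cdot)$ is nonlinear in the measure, $\Expect[U(0,\mu_0^N)]\neq U(0,\mu_0)$ in general, and bounding the difference by Lipschitz continuity of $U(0,\cdot)$ in $\mathbf{d}$ together with the empirical-measure convergence rate would only yield $O(N^{-1/2})$, which ruins the claimed $O(N^{-1})$ rate. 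The paper devotes a separate argument to this term (Lemma~\ref{lem_initial}, following the technique of \cite{chassagneux2022weak}): a second-order expansion in the linear derivative combined with the exchangeability of the i.i.d.\ initial data $Z_0^1,\dots,Z_0^N$, and it is exactly there that the hypothesis $\Expect[(\#K_0)^2]<\infty$ is consumed, to control terms of the form $\sum_{k\in K_0}(\#K_0+\#K_0^1)\sup\abs{\delta_\mu^2 U}$. Your proposal instead attributes that moment condition to ``propagating moments along the flow and controlling the Taylor remainders,'' which is not where it is needed; without a Lemma~\ref{lem_initial}-type estimate the argument as written does not close at rate $1/N$. A minor additional point: the paper does not establish $U\in\cC_b^{1,2}([0,T]\times\cM_2(\R^d))$ as you assert, but only $\cC^{1,2}$ regularity on $[0,T]\times(\cM_{2+\delta}(\R^d)\setminus\{0\})$ with derivatives bounded on mass-bounded sets, which is what the final estimate actually uses.
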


\begin{rem}
    The $(2+\delta)$th moment of $\mu_0$ is required for regularity of the value function (see Lemma~\ref{lifting_inverse_continuity}),
    while density yields flow property of $\mu_t$ (refer to Section~\ref{append_flow}).
    The second moment of $\#K_0$ is used in the estimate of Lemma~\ref{lem_initial}.
\end{rem}

\section{Proofs}\label{proofs}
In Subsection~\ref{subsec:bkol}, we introduce a non-branching McKean-Vlasov diffusion process, whose marginal distribution $\rho_t$ induces the marginal measure $\tilde \mu_t$ which satisfies the same Fokker--Planck equation as that of the branching diffusion $\mu_t$, so that the conditional value of $G(\tilde \mu_T)$ and that of $G(\mu_T)$ satisfy the same Kolmogorov backward master equation \eqref{eq:bKol}.
We then introduce the value function $U(t,\mu)$ as conditional value of $G(\tilde \mu_T)$, and study its regularity based on the non-branching McKean-Vlasov diffusion in Subsection~\ref{proof_value_regularity}.
Finally, in Subsection~\ref{proof_main}, we complete proof of Theorem~\ref{thm_WPoC}.

\subsection{A non-branching diffusion process and  Kolmogorov backward equation}\label{subsec:bkol}

Following the classical approach such as in \cite{10.1214/19-EJP298, chassagneux2022weak}, etc., we will introduce a value function $U$ as conditional value of $U(\mu_T)$,
which  serves as key step to estimate $\abs{G(\mu_T)-\Expect[G(\mu_T^N)]}$ (refer to \eqref{functional_taylor}).
However, the regularity analysis of the branching SDE is far more intricate than that of non-branching SDEs, we therefore introduce a $\R^{d+1}$-valued non-branching diffusion $(Y, Z)$, which can be considered as a lifted process of the branching diffusion $\bar Z$. 
Let us also mention that a similar lifting technique has also been used in \cite{wang2024mean}.

\vspace{0.5em}

Let us consider the diffusion $(Y,Z)={(Y_t,Z_t)_{t\in[0,T]}}$, governed by the following SDE (recall $c(\cdot)=\gamma(\sum_l lp_l-1)(\cdot)$ in Assumption \ref{assum:coefficients})
\begin{equation}\label{lifted_MeanField}
    \left\{
    \begin{aligned}
        \d Y_t&=b(t,Y_t,\tilde{\mu}_t)\,\d t+\sigma(t,Y_t,\tilde{\mu}_t)\,\d W_t,\\
        \d Z_t&=c(t,Y_t,\tilde{\mu}_t)Z_t\,\d t,
    \end{aligned}
    \right.
\end{equation}
where $Z_t\in[0,\infty)~\text{a.s.}$,
and $\tilde{\mu}_t\in\cM_2(\R^d)$ satisfies, for all bounded measurable functions $\varphi:\R^d\to\R$,
\[\langle \varphi,\tilde{\mu}_t\rangle=\Expect[\varphi(Y_t)Z_t].\]
Let
$$\rho_t:=\text{Law}(Y_t,Z_t)\in\cP(\R^d\times [0,\infty))$$
be the marginal distribution of $(Y,Z)$, so that 
\begin{equation}\label{dynamics_dual}
    \langle \varphi,\tilde{\mu}_t\rangle=\int_{\R^d\times\R^1}\varphi(y)z\,\rho_t(\d y\d z).
\end{equation}
In other words, the finite measure $\tilde{\mu}_t$ is a ``projection'' of the probability measure $\rho_t$.
By $Z_t\geq0$ and Assumption~\ref{assum:coefficients}, it holds, for all $0\leq t\leq s\leq T$, that
\[Z_t\exp\big(-\bar{\gamma}M(s-t)\big)\leq Z_s\leq Z_t\exp\big(\bar{\gamma}M(s-t)\big),\]
which implies $\rho_t\in\cP(\R^d\times[0,C\exp(\bar{\gamma}MT)]),\,t\in[0,T]$ if $\rho_0\in\cP(\R^d\times[0,C])$ for some $C\in[0,\infty)$.

\begin{lem} \label{lem:FKP}
	Both flows of measures $(\mu_t)_{t \in [0,T]}$ in \eqref{eq:environment} and $(\tilde \mu_t)_{t \in [0,T]}$ in \eqref{dynamics_dual} satisfy the same nonlinear Fokker--Planck equation:
	for each $t \le s \le T$ and $f\in\cC^{1,2}_b([t,T]\times\R^d)$, with $f_r(\cdot):=f(r,\cdot)$,
	\begin{equation}\label{F-P_duality}
	\langle f_s,\mu_s\rangle
		=\langle f_t,\mu\rangle+\int_t^s\Big\langle \partial_r f_r(\cdot)+\Big(\cL f_r+\gamma\Big(\sum\limits_l lp_l-1\Big)f_r\Big)(r,\cdot,\mu_r),\mu_r\Big\rangle\,\d r.
	\end{equation}	
\end{lem}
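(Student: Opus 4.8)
I would prove the two assertions — the one for $(\mu_t)_{t\in[0,T]}$ and the one for $(\tilde{\mu}_t)_{t\in[0,T]}$ — separately, but by the same device in each case: apply an It\^o formula to the dynamics at hand tested against $f$, then take expectations to kill the martingale part. I want to stress at the outset that the lemma does \emph{not} assert $\mu_t=\tilde{\mu}_t$ — it only says that the two flows satisfy one and the same Fokker--Planck relation, whose nonlinearity enters solely through the already-fixed curves $(\mu_r)$, resp.~$(\tilde{\mu}_r)$, so that the coefficients appearing in it are merely bounded Lipschitz functions of $(r,x)$ and plain It\^o calculus suffices; the identification $\mu_t=\tilde{\mu}_t$ is a separate step, to be carried out later via the uniqueness result of Section~\ref{append_flow}.

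\textbf{The branching flow.} Fix $t\le s\le T$ and $f\in\cC^{1,2}_b([t,T]\times\R^d)$. I would apply the time-dependent analogue of \eqref{SDE_E_MeanField} (which follows from \eqref{SDE_E_MeanField} by a routine in-time approximation) with $\phi^k(r,\cdot)=f(r,\cdot)$ for every $k\in\mathbb{K}$; this expresses $\langle f_r,\bar{Z}_r\rangle-\langle f_t,\bar{Z}_t\rangle$ as $\int_t^r\sum_{k\in K_u}\big(\partial_u f_u(X_u^k)+\cL f_u(u,X_u^k,\mu_u)\big)\,\d u$, plus the Brownian integral $\int_t^r\sum_{k\in K_u}Df_u(X_u^k)\cdot\sigma(u,X_u^k,\mu_u)\,\d W_u^k$, plus the jump integral $J_r$ of \eqref{SDE_E_MeanField}. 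Since every offspring is born at the location of its parent, the bracket $\big(\sum_{i=1}^l\phi^{ki}-\phi^k\big)(X_u^k)$ in $J_r$ collapses to $(l-1)f_u(X_u^k)$, and the predictable intensity of $\mathbbm{1}_{[0,\gamma(u,X_u^k,\mu_u)]\times I_l(u,X_u^k,\mu_u)}(z)\,Q^k(\d u,\d z)$ is $\gamma p_l\,\d u$. Using the bounds $|\partial_u f_u|+|\cL f_u|\le C$, $|Df_u\cdot\sigma|\le C$, $\sum_l|l-1|p_l\le 1+\sum_l lp_l\le 1+M$, together with $\sup_{u\le T}\Expect[\#K_u]<\infty$ from \eqref{mass_growth}, I can justify Fubini in every term, see that the Brownian integral is a genuine martingale, and replace $J_r$ under the expectation by its compensator $\int_t^r\sum_{k\in K_u}\gamma\big(\sum_l lp_l-1\big)f_u(X_u^k)\,\d u$. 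Taking expectations and inserting the definition \eqref{eq:environment} of $\mu_u$ then produces \eqref{F-P_duality} for $\mu$.

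\textbf{The lifted flow.} Along \eqref{lifted_MeanField}, I would apply It\^o's formula to the product $r\mapsto f_r(Y_r)Z_r$; since $Z$ is absolutely continuous, $\d Z_r=c(r,Y_r,\tilde{\mu}_r)Z_r\,\d r$, there is no cross-variation term, and one gets $\d\big(f_r(Y_r)Z_r\big)=Z_r\big(\partial_r f_r+\cL f_r+cf_r\big)(r,Y_r,\tilde{\mu}_r)\,\d r+Z_r\,Df_r(Y_r)\cdot\sigma(r,Y_r,\tilde{\mu}_r)\,\d W_r$, with $\cL$ carrying the coefficients frozen at $\tilde{\mu}_r$. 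The pathwise sandwich $Z_t e^{-\bar{\gamma}M(s-t)}\le Z_s\le Z_t e^{\bar{\gamma}M(s-t)}$ and $\Expect[Z_0]=\langle 1,\tilde{\mu}_0\rangle<\infty$ give $\Expect\big[\sup_{r\le T}Z_r\big]<\infty$, so — localising the stochastic integral by $\tau_n:=\inf\{r\ge t:Z_r\ge n\}$, taking expectations, and letting $n\to\infty$ by dominated convergence (dominating function $C(1+T)\sup_{r\le T}Z_r$) — the stochastic term drops out. Using $\langle g,\tilde{\mu}_r\rangle=\Expect[g(Y_r)Z_r]$ for the bounded integrand $\partial_r f_r+\cL f_r+cf_r$ and recalling $c=\gamma(\sum_l lp_l-1)$, I obtain \eqref{F-P_duality} for $\tilde{\mu}$. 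In both parts the version with lower limit $t$ rather than $0$ will follow by restarting \eqref{SDE_E_MeanField}, resp.~\eqref{lifted_MeanField}, at time $t$ and invoking the flow property.

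\textbf{Main obstacle.} I expect the only genuinely branching-specific difficulty to be the bookkeeping of the jump integral $J$: one must recognise both that offspring inherit the parent's position — so the jump of $r\mapsto\langle f_r,\bar{Z}_r\rangle$ at a reproduction event of size $l$ is $(l-1)f_u(X_u^k)$ — and that the Lebesgue mass of $[0,\gamma]\times I_l$ is $\gamma p_l$, these two facts jointly generating the zeroth-order term $\gamma(\sum_l lp_l-1)f_r=cf_r$ in the Fokker--Planck equation, which is exactly the term $cf_r$ that surfaces on the lifted side through the ODE $\d Z_r=cZ_r\,\d r$. Everything else is routine, the a priori estimate \eqref{mass_growth} (branching side) and the exponential sandwich for $Z$ (lifted side) delivering precisely the integrability required for Fubini and for discarding the martingale increments.
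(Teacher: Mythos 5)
Your proposal is correct and takes essentially the same route as the paper: the paper treats the branching case as a direct consequence of \eqref{eq:environment}--\eqref{SDE_E_MeanField}, which is exactly your computation of testing the SDE with $\phi^k=f$, compensating the Poisson integral to produce the zeroth-order term $\gamma\big(\sum_l lp_l-1\big)f$, and removing the martingale parts in expectation (compare the identity \eqref{environment_integral} in the appendix, extended to time-dependent $f$ as in the proof of Theorem~\ref{ito_formula}). The lifted case, which the paper omits as ``similar'', is precisely your It\^o computation on $f_r(Y_r)Z_r$ using that $Z$ has finite variation and $c=\gamma\big(\sum_l lp_l-1\big)$.
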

The proof for $(\mu_t)_{t \in [0,T]}$ is a direct consequence of \eqref{eq:environment}-\eqref{SDE_E_MeanField}, see also Appendix of \cite{claisse2025optimal}.
The proof of $(\tilde \mu_t)_{t \in [0,T]}$ is similar, which is omitted.

\vspace{0.5em}

Now we consider the lifting operator
\begin{align*}
    \cT:\cC(\R^d)&\to \cC\big(\R^d\times[0,\infty)\big)\\
    \varphi&\mapsto\tilde{\varphi}=:\cT\varphi\quad\text{s.t.}\\
    \tilde{\varphi}(y,z)&=\varphi(y)z\quad\text{for~}(y,z)\in\R^d\times[0,\infty),
\end{align*}
and its ``dual'' in the sense of the projection
\begin{align*}
    \cT^*:\cP_2\big(\R^d\times[0,\infty)\big)&\to\cM(\R^d)\\
    \rho&\mapsto\mu=:\cT^*\rho\quad\text{s.t.}\\
    \mu(\d y)&=\int_0^\infty z\,\rho(\d y\d z)\ \text{for~}(y,z)\in\R^d\times[0,\infty).
\end{align*}
It holds clearly that
\begin{equation}\label{function_dual}
    \langle\varphi,\cT^*\rho\rangle=\langle\cT\varphi,\rho\rangle
\end{equation} 
for all $\varphi\in\cC_b(\R^d),\,\rho\in\cP_2(\R^d\times[0,\infty))$, and that $\tilde{\mu}_t=\cT^*\rho_t$ for all $t\in[0,T]$.
Furthermore, if $\rho\in\cP_2(\R^d\times[0,C])$ for some $C\in[0,\infty)$,
then $\cT^*\rho\in\cM_2(\R^d)$ and $(\cT^*\rho)(\R^d)\leq C$.
The following lemma shows the local Lipschitz continuity of $\cT^*$.
\begin{lem}\label{Lipschitz_lifting}
    For each $C\in[0,\infty)$, it holds that (recall Definition~\ref{wasserstein})
    \[\mathbf{d}(\cT^*\rho^1,\cT^*\rho^2)\leq (C+1)\cW_1(\rho^1,\rho^2)\leq (C+1)\cW_2(\rho^1,\rho^2)\]
    for all $\rho^1,\rho^2\in\cP_2\big(\R^d\times[0,C]\big)$.
\end{lem}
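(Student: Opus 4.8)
The plan is to prove the estimate by exhibiting an explicit coupling of $\rho^1$ and $\rho^2$ and controlling the defect of the bounded Lipschitz distance along it. Recall that
\[
\mathbf{d}(\cT^*\rho^1, \cT^*\rho^2) = \sup_{\|f\|_\infty,\,\|f\|_{\mathrm{Lip}}\le 1} \big|\langle f, \cT^*\rho^1 - \cT^*\rho^2\rangle\big|,
\]
and by \eqref{function_dual} we have $\langle f, \cT^*\rho^i\rangle = \langle \cT f, \rho^i\rangle = \int_{\R^d\times[0,C]} f(y)z\,\rho^i(\d y\,\d z)$. So for any $f$ with $\|f\|_\infty\le 1$ and $\|f\|_{\mathrm{Lip}}\le 1$, and any coupling $\pi\in\Lambda(\rho^1,\rho^2)$ on $(\R^d\times[0,C])^2$,
\[
\big|\langle f, \cT^*\rho^1 - \cT^*\rho^2\rangle\big| = \Big| \int \big( f(y^1)z^1 - f(y^2)z^2 \big)\, \pi(\d(y^1,z^1)\,\d(y^2,z^2)) \Big|.
\]

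The key step is the pointwise bound on the integrand. First I would write the telescoping decomposition
\[
f(y^1)z^1 - f(y^2)z^2 = \big(f(y^1) - f(y^2)\big) z^1 + f(y^2)\big(z^1 - z^2\big),
\]
and estimate each term: since $\|f\|_{\mathrm{Lip}}\le 1$ and $0\le z^1\le C$, the first term is bounded by $C\,|y^1 - y^2|$; since $\|f\|_\infty\le 1$, the second term is bounded by $|z^1 - z^2|$. Hence the integrand is at most $C|y^1-y^2| + |z^1-z^2| \le (C+1)\big(|y^1-y^2| + |z^1-z^2|\big)$, which is at most $(C+1)\,d_{\R^d\times[0,\infty)}\big((y^1,z^1),(y^2,z^2)\big)$ for the product (say $\ell^1$, or up to a constant $\ell^2$) metric on $\R^d\times[0,\infty)$. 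Integrating against $\pi$ gives $(C+1)\int d\big((y^1,z^1),(y^2,z^2)\big)\,\pi$, and taking the infimum over couplings $\pi\in\Lambda(\rho^1,\rho^2)$ yields $(C+1)\cW_1(\rho^1,\rho^2)$ on the right. Finally, $\cW_1\le\cW_2$ by Jensen's inequality (or the monotonicity of $\cW_p$ in $p$). Taking the supremum over admissible $f$ on the left gives exactly the claimed chain of inequalities.

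There is no real obstacle here; the only point requiring a little care is bookkeeping on which metric is used on $\R^d\times[0,\infty)$ when defining $\cW_1$ and $\cW_2$ (Euclidean product metric vs.\ sum metric), so that the factor $(C+1)$ comes out cleanly rather than with an extra $\sqrt 2$; one should state the metric convention explicitly and note that the estimate $C|y^1-y^2|+|z^1-z^2|\le (C+1)\big(|y^1-y^2|^2+|z^1-z^2|^2\big)^{1/2}$ already suffices since $a+b\le\sqrt 2\sqrt{a^2+b^2}\le 2\sqrt{a^2+b^2}$ if one wants to absorb everything, but in fact $C|y^1-y^2|+|z^1-z^2|\le (C+1)(|y^1-y^2|+|z^1-z^2|)$ is the cleanest route and matches the $\ell^1$-type product metric. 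The passage from $\cW_1$ to $\cW_2$ is immediate and needs no comment beyond citing Jensen.
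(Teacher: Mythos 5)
Your argument is correct and is essentially the paper's: the telescoping bound $f(y^1)z^1-f(y^2)z^2=(f(y^1)-f(y^2))z^1+f(y^2)(z^1-z^2)$ is precisely the paper's observation that $\norm{\cT f}_{\mathrm{Lip}(\R^d\times[0,C])}\leq \norm{f}_{\infty}+C\norm{f}_{\mathrm{Lip}}\leq C+1$, and integrating this against an explicit coupling is just the elementary direction of the Kantorovich duality that the paper cites. Your concern about the choice of product metric is harmless, since $C\abs{y^1-y^2}+\abs{z^1-z^2}\leq (C+1)\big(\abs{y^1-y^2}^2+\abs{z^1-z^2}^2\big)^{1/2}$ follows directly from Cauchy--Schwarz (as $\sqrt{C^2+1}\leq C+1$), so the factor $(C+1)$ comes out cleanly for the Euclidean metric as well.
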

\begin{proof}
    By Definition~\ref{wasserstein}, we have
    \[\mathbf{d}(\cT^*\rho^1,\cT^*\rho^2)=\sup_{\norm{f}_{\infty},\,\norm{f}_{\text{Lip}}\leq 1}\langle\cT f,\rho^1-\rho^2\rangle.\]
    Note that
    \[\norm{\cT f}_{\text{Lip}(\R^d\times[0,C])}\leq \norm{f}_{\infty}+C\norm{f}_{\text{Lip}},\]
    and hence we have, by Kantorovich duality (cf.~\cite{villani2008optimal}),
    \[\mathbf{d}(\cT^*\rho^1,\cT^*\rho^2)\leq (C+1)\cW_1(\rho^1,\rho^2).\]
\qed
\end{proof}
By Lemma~\ref{Lipschitz_lifting}, one can further establish the well-posedness of the non-branching SDE \eqref{lifted_MeanField} (refer to Section~\ref{lifted_wellposedness_proof} for proof).
\begin{lem}\label{lifted_wellposedness}  
	Suppose that Assumption~\ref{assum:coefficients} holds,
    and the initial condition $(Y_0, Z_0) \sim \rho_0\in\cP_2(\R^d\times[0,C])$ for some constant $C\in[0,\infty)$.
    Then there exists a unique strong solution 
    to the initial value problem of \eqref{lifted_MeanField}.
\end{lem}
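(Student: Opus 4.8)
The plan is to construct $(Y,Z)$ together with the environment flow $(\tilde\mu_t)_{t\in[0,T]}$ via a Banach fixed point argument, the law-dependence being reduced to a dependence on the finite measure $\tilde\mu_t=\cT^*\rho_t$ by Lemma~\ref{Lipschitz_lifting}. First I would fix a continuous deterministic flow $\nu_\cdot\in C\big([0,T];(\cM_2(\R^d),\mathbf{d})\big)$ and freeze it in the coefficients: by Assumption~\ref{assum:coefficients} the maps $(t,x)\mapsto b(t,x,\nu_t)$, $(t,x)\mapsto\sigma(t,x,\nu_t)$ are bounded and uniformly Lipschitz in $x$, so the decoupled SDE
\[
\d Y_t^\nu=b(t,Y_t^\nu,\nu_t)\,\d t+\sigma(t,Y_t^\nu,\nu_t)\,\d W_t,\qquad Y_0^\nu=Y_0,
\]
has a unique strong solution, and $Z_t^\nu:=Z_0\exp\!\big(\int_0^t c(r,Y_r^\nu,\nu_r)\,\d r\big)$ solves the second line of \eqref{lifted_MeanField}. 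Since $\|c\|_\infty\le\bar\gamma M$ and $Z_0\in[0,C]$ one gets $0\le Z_t^\nu\le C':=Ce^{\bar\gamma MT}$ a.s., and boundedness of $b,\sigma$ gives $\sup_{\nu}\sup_{t\le T}\Expect[|Y_t^\nu|^2]<\infty$; hence $\rho_t^\nu:=\text{Law}(Y_t^\nu,Z_t^\nu)\in\cP_2(\R^d\times[0,C'])$ with second moment bounded uniformly in $(\nu,t)$. Setting $\Phi(\nu)_t:=\cT^*\rho_t^\nu$, Lemma~\ref{Lipschitz_lifting} together with $L^2$-continuity of $t\mapsto(Y_t^\nu,Z_t^\nu)$ shows $\Phi(\nu)_\cdot\in C\big([0,T];(\cM_2(\R^d),\mathbf{d})\big)$, and the above a priori bounds show that $\Phi$ maps the set
\[
\mathcal{S}:=\Big\{\nu_\cdot\in C\big([0,T];(\cM(\R^d),\mathbf{d})\big):\ \sup_{t\le T}\nu_t(\R^d)\le C',\ \sup_{t\le T}\textstyle\int_{\R^d}|x|^2\,\nu_t(\d x)\le R\Big\}
\]
into itself for $R$ large enough; $\mathcal{S}$ is complete because $(\cM(\R^d),\mathbf{d})$ is Polish, $\mu\mapsto\mu(\R^d)$ is $\mathbf{d}$-continuous, and the second moment is lower semicontinuous under weak convergence.

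Next I would establish the contraction estimate. Given $\nu,\nu'\in\mathcal{S}$, couple the corresponding decoupled solutions using the same Brownian motion and the same initial datum $(Y_0,Z_0)$; then Assumption~\ref{assum:coefficients} (Lipschitz continuity of $b,\sigma,c$ in $(x,\mu)$ with respect to $|\cdot|$ and $\mathbf{d}$), the Burkholder--Davis--Gundy inequality and Gronwall's lemma yield
\[
\Expect\Big[\sup_{r\le t}\big(|Y_r^\nu-Y_r^{\nu'}|^2+|Z_r^\nu-Z_r^{\nu'}|^2\big)\Big]\ \le\ K\int_0^t\mathbf{d}(\nu_s,\nu_s')^2\,\d s,\qquad t\in[0,T],
\]
with $K$ depending only on $T,M,L,\bar\gamma,C$. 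Since $\cW_2(\rho_t^\nu,\rho_t^{\nu'})^2\le\Expect[|Y_t^\nu-Y_t^{\nu'}|^2+|Z_t^\nu-Z_t^{\nu'}|^2]$, Lemma~\ref{Lipschitz_lifting} gives $\mathbf{d}\big(\Phi(\nu)_t,\Phi(\nu')_t\big)^2\le(C'+1)^2K\int_0^t\mathbf{d}(\nu_s,\nu_s')^2\,\d s$. Iterating this bound, $\sup_{t\le T}\mathbf{d}\big(\Phi^n(\nu)_t,\Phi^n(\nu')_t\big)^2\le\frac{\big((C'+1)^2KT\big)^n}{n!}\sup_{t\le T}\mathbf{d}(\nu_t,\nu_t')^2$, so some iterate $\Phi^n$ is a strict contraction on $\mathcal{S}$ and $\Phi$ has a unique fixed point $(\tilde\mu_t)_{t\in[0,T]}\in\mathcal{S}$. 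Then $(Y^{\tilde\mu},Z^{\tilde\mu})$ is a strong solution of \eqref{lifted_MeanField}; conversely, any strong solution induces, by the same a priori bounds, a flow in $\mathcal{S}$ that is a fixed point of $\Phi$, hence equal to $(\tilde\mu_t)$, and pathwise uniqueness of the decoupled SDE then identifies the process with $(Y^{\tilde\mu},Z^{\tilde\mu})$, giving existence and uniqueness.

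I expect the only genuinely non-standard points to be (i) that the measure feedback enters through the projection $\tilde\mu_t=\cT^*\rho_t$ rather than through $\rho_t$ itself, and (ii) that $Z_t$ depends on the full past of the path $Y$; both are neutralized by the uniform mass bound $Z_t^\nu\le C'$ and the local Lipschitz continuity of $\cT^*$ (Lemma~\ref{Lipschitz_lifting}), after which the estimate reduces to the classical McKean--Vlasov Gronwall computation. A minor bookkeeping point is to record the a priori bounds first, so that the constants $C',R$ defining $\mathcal{S}$ are available before $\mathcal{S}$ is invoked.
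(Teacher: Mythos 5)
Your proposal is correct, but it takes a different (equally classical) route from the paper. The paper proves Lemma~\ref{lifted_wellposedness} by Picard iteration at the process level, in the spirit of \cite[Theorem 5.2.1]{oksendal2013stochastic}: it defines iterates $(Y^{(k)},Z^{(k)})$ whose own laws $\rho^{(k)}_t$ are fed back into the coefficients, shows via Lemma~\ref{Lipschitz_lifting}, Cauchy--Schwarz, Burkholder--Davis--Gundy and an induction that the successive differences decay factorially in the norm of $\mathbb{H}^2_{d+1}=\mathbb{L}^2(\Omega;\cC([0,T];\R^{d+1}))$, passes to the limit (with Borel--Cantelli for a.s.\ convergence), and then gets uniqueness from a Gronwall estimate. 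You instead decouple: freeze a flow $\nu_\cdot$ of finite measures, solve the resulting standard Lipschitz SDE, and run a Banach fixed-point argument for the map $\nu\mapsto\cT^*\mathrm{Law}(Y^\nu_\cdot,Z^\nu_\cdot)$ on a complete set $\mathcal{S}$ of continuous measure flows, with the contraction coming from the same synchronous-coupling Gronwall/BDG estimate combined with Lemma~\ref{Lipschitz_lifting}, and uniqueness of the process recovered from pathwise uniqueness of the frozen equation. Both arguments hinge on the same two ingredients the paper isolates, namely the a priori mass bound $Z_t\le C e^{\bar\gamma M T}$ and the local Lipschitz continuity of $\cT^*$; your route buys a cleaner separation between classical SDE theory and the measure-level fixed point (and makes uniqueness of the flow $(\tilde\mu_t)$ explicit), at the cost of the extra bookkeeping you already flag: verifying that $\mathcal{S}$ is complete (using that $\mathbf{d}$ metrizes weak convergence, that the mass functional is $\mathbf{d}$-Lipschitz, and lower semicontinuity of second moments) and that $\Phi$ maps $\mathcal{S}$ into itself, steps the paper avoids by staying in $\mathbb{H}^2_{d+1}$. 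Your estimates as stated are achievable under Assumption~\ref{assum:coefficients}, and the final identification of an arbitrary strong solution with the fixed point is the standard argument and is sound.
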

\begin{rem}
    We observe that the marginal distribution $\rho_t$ and the flow $\rho_s^{t,\rho}$ below can take value throughout the space $\cup_{C\geq0}\cP_2(\R^d\times[0,C])$.
    For simplicity, we still write $\cP_2(\R^d\times[0,C])$, where $C$ represents a positive constant whose value can vary through lines.
\end{rem}

Consequently, \eqref{lifted_MeanField} induces the flow (cf.~\cite[Proposition 1, Section 2.2]{chaintron2022propagation})
\[\rho_{s}^{t,\rho}:=\text{Law}\Big((Y_s,Z_s)\big| (Y_t,Z_t)\sim\rho\Big),\quad 0\leq t\leq s\leq T,\,\rho\in\cP_2(\R^d\times[0,C]),\]
which satisfies
\[\rho_s^{t,\rho_t}=\rho_s,\,\rho_s^{r,\rho_r^{t,\rho}}=\rho_s^{t,\rho},\quad 0 \leq t\leq r\leq s\leq T,\,\rho\in\cP_2(\R^d\times[0,C]).\]
Moreover, the flow $\rho_\cdot^{t,\cdot}$ satisfies the following property (refer to Subsection~\ref{proof:mollify} for proof).
\begin{lem}\label{lem:mollify}
    Let Assumptions~\ref{assum:coefficients} and \ref{assum:nondegen} hold.
    For all $\rho,\hat{\rho}\in\cP_2(\R^d\times[0,C])$ such that $\cT^*\rho=\cT^*\hat{\rho}$, it holds that
    \[\cT^*\rho_s^{t,\rho}=\cT^*\rho_s^{t,\hat{\rho}}\]
    for all $0\leq t\leq s\leq T$.
\end{lem}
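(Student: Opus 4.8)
The plan is to identify each of the two flows $s\mapsto\cT^*\rho_s^{t,\rho}$ and $s\mapsto\cT^*\rho_s^{t,\hat\rho}$ with a solution of one and the same nonlinear Fokker--Planck equation, and then to invoke a uniqueness result for that equation after a mollification reducing matters to initial data in $\cM_2^\theta(\R^d)$. Fix $t\in[0,T]$ and $\rho,\hat\rho\in\cP_2(\R^d\times[0,C])$ with $\mu:=\cT^*\rho=\cT^*\hat\rho\in\cM_2(\R^d)$, and set $\nu_s:=\cT^*\rho_s^{t,\rho}$, $\hat\nu_s:=\cT^*\rho_s^{t,\hat\rho}$ for $s\in[t,T]$. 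Exactly as in the derivation of Lemma~\ref{lem:FKP} --- applying It\^o's formula to $s\mapsto\varphi(Y_s)Z_s$ along \eqref{lifted_MeanField} started from $\rho$ at time $t$ and using $\d Z_s=c(s,Y_s,\tilde\mu_s)Z_s\,\d s$ --- one checks that $(\nu_s)_{s\in[t,T]}$ solves \eqref{F-P_duality} on $[t,T]$ with $\nu_t=\mu$, and likewise $(\hat\nu_s)_{s\in[t,T]}$ solves it with $\hat\nu_t=\mu$. Moreover, from $Z_t\exp(-\bar{\gamma}M(s-t))\le Z_s\le Z_t\exp(\bar{\gamma}M(s-t))$ and Assumption~\ref{assum:coefficients}, both flows take values in $\cM_2(\R^d)$ with total mass bounded by $C\exp(\bar{\gamma}MT)$. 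Hence the lemma reduces to showing that \eqref{F-P_duality} has at most one such solution issued from a prescribed initial datum at time $t$.

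For initial data in $\cM_2^\theta(\R^d)$ this uniqueness is exactly the uniqueness part of the well-posedness of the nonlinear Fokker--Planck equation established in Section~\ref{append_flow}, which uses the non-degeneracy Assumption~\ref{assum:nondegen}. To bootstrap to a general $\mu\in\cM_2(\R^d)$, I would mollify. Fix mollifiers $\phi_n\ge 0$, $\int\phi_n=1$, supported in the ball $B(0,1/n)$, and smooth cut-offs $\chi_n:\R^d\to[0,1]$ with $\chi_n\equiv 1$ on $B(0,n)$ and compact support, and set $\mu^n:=\phi_n*(\chi_n\mu)$; then $\mu^n$ has a bounded, compactly supported density, so $\mu^n\in\cM_2^\theta(\R^d)$ for every $\theta>0$, and $\mathbf{d}(\mu^n,\mu)\to 0$. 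Let $\rho^n$ be obtained from $\rho$ by convolving with $\phi_n$ in the $y$-variable, multiplying by $\chi_n$ evaluated at the original $y$-coordinate, and then placing the missing mass $1-\int\chi_n\,\d\rho$ at the point $(0,0)$; then $\rho^n$ is a probability measure on $\R^d\times[0,C]$ with $\rho^n\to\rho$ in $\cW_2$ (weak convergence together with convergence of second moments, the $z$-coordinate being bounded). Since the $z$-weight in $\cT^*$ commutes with the $y$-operations and the correction mass sits on $\{z=0\}$ and is therefore invisible to $\cT^*$, and since $\cT^*\rho=\mu$, applying this same recipe to $\hat\rho$ gives $\hat\rho^n\to\hat\rho$ in $\cW_2$ together with the crucial identity
\[
\cT^*\rho^n=\phi_n*(\chi_n\mu)=\cT^*\hat\rho^n=:\mu^n\in\cM_2^\theta(\R^d).
\]

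For each $n$, the flows $s\mapsto\cT^*\rho_s^{t,\rho^n}$ and $s\mapsto\cT^*\rho_s^{t,\hat\rho^n}$ again solve \eqref{F-P_duality} with the same initial datum $\mu^n\in\cM_2^\theta(\R^d)$ at time $t$, so by the uniqueness result they coincide on $[t,T]$. I then pass to the limit $n\to\infty$ using (i) the $\cW_2$-stability of the mean-field flow map $\rho\mapsto\rho_s^{t,\rho}$, which follows from Assumption~\ref{assum:coefficients}, Lemma~\ref{Lipschitz_lifting} (turning the $\mathbf{d}$-Lipschitz dependence of $(b,\sigma,c)$ on the measure argument into a $\cW_2$-Lipschitz one) and a Gr\"onwall argument, exactly as in the proof of Lemma~\ref{lifted_wellposedness}; and (ii) the continuity of $\cT^*$ from Lemma~\ref{Lipschitz_lifting}. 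This yields $\cT^*\rho_s^{t,\rho}=\lim_n\cT^*\rho_s^{t,\rho^n}=\lim_n\cT^*\rho_s^{t,\hat\rho^n}=\cT^*\rho_s^{t,\hat\rho}$ for all $s\in[t,T]$, and since $t$ was arbitrary this is the assertion.

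The main obstacle is the simultaneous approximation in the second paragraph: one must produce $\rho^n\to\rho$ and $\hat\rho^n\to\hat\rho$ (in $\cW_2$, staying in $\cup_{C}\cP_2(\R^d\times[0,C])$) whose $\cT^*$-images are \emph{identical} and lie in $\cM_2^\theta$, even though $\rho$ and $\hat\rho$ may be genuinely different lifts of $\mu$. What makes it work is precisely that $\cT^*$ sees a measure only through its $z$-weighted $y$-marginal, so any operation performed in the $y$-variable acts on $\rho$ and $\hat\rho$ through the single measure $\mu$, while mass defects can be repaired on the slice $\{z=0\}$ without perturbing $\cT^*$. The only other nontrivial ingredient is the $\cW_2$-stability of the flow of \eqref{lifted_MeanField} uniformly over the family of initial laws in play (mass bounded, $z$-support in $[0,C]$), for which Lemma~\ref{Lipschitz_lifting} is exactly what is needed.
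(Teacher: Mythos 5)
Your overall route is the paper's: project both lifted flows onto the same nonlinear Fokker--Planck equation \eqref{F-P_duality}, regularize so that the initial datum falls within the scope of the uniqueness result of Proposition~\ref{prop:uniqueness}, and then remove the regularization by a $\cW_2$-stability (Gr\"onwall) estimate for \eqref{lifted_MeanField} combined with Lemma~\ref{Lipschitz_lifting}. Your regularization of the initial data is different from, and arguably cleaner than, the paper's: you convolve/truncate $\rho$ and $\hat\rho$ in the $y$-variable at the level of measures and deposit the missing mass on $\{z=0\}$, so that $\cT^*\rho^n=\cT^*\hat\rho^n=\phi_n*(\chi_n\mu)\in\cM_2^\theta(\R^d)$ by inspection; the paper instead perturbs $Y_0$ pathwise by a truncation plus an independent Gaussian $\varepsilon\xi$ (keeping $Z_0$), and needs a conditional-expectation argument ($g(Y_0^\varepsilon)=\Expect[Z_0^\varepsilon\,|\,Y_0^\varepsilon]$) to exhibit a density of $\cT^*\rho_0^\varepsilon$ with Gaussian tails. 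Your key observation --- that $\cT^*$ only sees the $z$-weighted $y$-marginal, so identical $y$-operations applied to different lifts of $\mu$ produce identical projections, and mass repair on $\{z=0\}$ is invisible to $\cT^*$ --- is the same mechanism the paper exploits when checking $\cT^*\rho_0^\varepsilon=\cT^*\hat\rho_0^\varepsilon$. Your limit passage (optimal coupling of initial laws, Gr\"onwall as in Appendix~\ref{lifted_wellposedness_proof}, then Lemma~\ref{Lipschitz_lifting}) matches the paper's.

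There is, however, one genuine omission. Lemma~\ref{lem:mollify} assumes only Assumptions~\ref{assum:coefficients} and \ref{assum:nondegen}, while Proposition~\ref{prop:uniqueness} additionally requires that a weak derivative $(\partial_x\sigma\sigma^\transpose)(t,\cdot,\nu)\in\mathbb{L}^1_{\mathrm{loc}}(\R^d;\R^{d\times d})$ exists; this hypothesis is not granted here (the paper's remark after Proposition~\ref{prop:uniqueness} states it is ensured by Assumption~\ref{assum:differentiability}, which is \emph{not} assumed in this lemma). This is precisely why the paper's proof mollifies the coefficient as well: it replaces $\sigma$ by $\sigma^\varepsilon=\sigma*\alpha^\varepsilon$ (smooth in $x$, uniformly bounded, Lipschitz, and still non-degenerate for small $\varepsilon$), applies the uniqueness result to the $\sigma^\varepsilon$-equation, and then absorbs the additional coefficient error $\norm{\sigma^\varepsilon-\sigma}_\infty\le L\varepsilon$ into the final stability estimate. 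Your proof invokes the uniqueness result for the original $\sigma$ without addressing this hypothesis. The gap is fixable: either add the same mollification of $\sigma$ (and a corresponding coefficient-difference term in your Gr\"onwall bound), or argue explicitly that Assumption~\ref{assum:coefficients}(ii) already implies $\sigma\sigma^\transpose(t,\cdot,\nu)$ is Lipschitz, hence weakly differentiable with bounded weak derivative (Rademacher), so Proposition~\ref{prop:uniqueness} applies directly --- but as written the step is missing, and it is exactly the step the paper's proof is built around.
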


On the other hand, we define an ``inverse'' mapping $\Phi$ of $\cT^*$, given by
\begin{equation}\label{lifting_inverse}
    \Phi\mu:=\bar{\mu}\otimes\delta_{\mu(\R^d)},\quad\mu\in\cM_2(\R^d).
\end{equation}
Here $\bar{\mu}$ denotes the normalized probability measure of some $\mu\in\cM(\R^d)$ such that $\bar{\mu}=\mu/\mu(\R^d)$ if $\mu\neq0$, and $\delta_0$ if otherwise.
We observe, for each $\mu\in\cM_2(\R^d)$,
that $\rho=\Phi\mu\in\cP_2(\R^d\times[0,C])$ for $C=\mu(\R^d)$ and $\cT^*\rho=\mu$,
which also implies $\cT^*$ maps $\cup_{C\geq0}\cP_2(\R^d\times[0,C])$ onto $\cM_2(\R^d)$.
Hence, by Lemma~\ref{lem:mollify}, the mapping
\begin{equation}\label{flow_lifting_defn}
    \tilde{\mu}_s^{t,\mu}:=\cT^*\rho_s^{t,\rho},\quad 0\leq t\leq s\leq T,\,\mu=\cT^*\rho\in\cM_2(\R^d)
\end{equation}
is well-defined, and satisfies the flow property
\begin{equation}\label{flow_lifting}
    \tilde{\mu}_s^{t,\tilde{\mu}_t}=\tilde{\mu}_s,\quad\tilde{\mu}_s^{r,\tilde{\mu}_r^{t,\mu}}=\tilde{\mu}_s^{t,\mu}
\end{equation}
for each $0\leq t\leq r\leq s\leq T$.
It is also clear that $\tilde{\mu}_s^{t,\mu}\in\cM_{2+\delta}(\R^d)$ if $\mu\in\cM_{2+\delta}(\R^d)$.

By \eqref{flow_lifting_defn}, the value function $U$ is defined as
\begin{equation}\label{value_function}
    U(t,\mu):=G(\tilde{\mu}_T^{t,\mu}),\quad (t,\mu)\in[0,T]\times \cM_2(\R^d).
\end{equation}

\begin{prop}\label{prop_bKol}
Suppose that Assumptions \ref{assum:coefficients}, \ref{assum:nondegen} and \ref{assum:differentiability} hold. 
For the value function $U$ defined by \eqref{value_function}, we have:

\emph{(i)} $U$ is $\cC^{1,2}$ on $[0,T]\times(\cM_{2+\delta}(\R^d)\setminus \{0\})$;

\emph{(ii)} $\delta_\mu U,\,D_\mu U,\, D_x^2\delta_\mu U$ are bounded on $[0,T]\times \mathcal{B}_2(C)\times\R^d$ and
$\delta_\mu^2 U,\,\D_\mu^2 U$ are bounded on $[0,T]\times \mathcal{B}_2(C)\times\R^d\times\R^d$, for all $C>0$, where $\mathcal{B}_p(C):=\{\mu\in\cM_p(\R^d):\mu(\R^d)\leq C\},\,p\geq 1$;

\emph{(iii)} For arbitrary $\delta > 0$, $U|_{[0,T]\times\cM_{2+\delta}(\R^d)}$ is a solution to the Kolmogorov backward  equation
\begin{align}\label{eq:bKol}
    &\left\{
    \begin{aligned}
        &\frac{\partial U}{\partial t}+\,\Big\langle \cL \frac{\delta U}{\delta \mu}+\gamma\Big(\sum\limits_l lp_l-1\Big)\frac{\delta U}{\delta \mu} ,\mu\Big\rangle=0,\quad t\in[0,T),\mu\in\cM_{2+\delta}(\R^d),\\
        &U(T,\cdot)=G(\cdot).
    \end{aligned}
    \right.
\end{align}
\end{prop}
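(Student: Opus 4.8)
The plan is to \emph{lift} the branching dynamics to the classical (non-branching) McKean--Vlasov system \eqref{lifted_MeanField} on $\R^{d+1}$, invoke the known $\cC^{1,2}$-regularity theory for McKean--Vlasov flows there, and then descend back to $U$ through the projection $\cT^*$. Concretely, set $\mathbf V(t,\rho):=(G\circ\cT^*)\big(\rho_T^{t,\rho}\big)$ for $(t,\rho)\in[0,T]\times\cP_2(\R^d\times[0,\infty))$, with $\rho_\cdot^{t,\cdot}$ the flow of \eqref{lifted_MeanField}. Since $Z_s$ stays in a compact interval $[0,C']$ with $C'=Ce^{\bar\gamma MT}$ once $Z_t\le C$ (Assumption~\ref{assum:coefficients}), I would first replace the unbounded term $c\,z$ in the $Z$-drift by $c\,\beta(z)$ with $\beta\in\cC_b^\infty$ equal to the identity on $[0,C']$: this leaves $\rho_\cdot^{t,\cdot}$ unchanged on $\cP_2(\R^d\times[0,C])$ and renders all coefficients of \eqref{lifted_MeanField} bounded with bounded derivatives. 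Because $\cT^*$ is linear and bounded while $b,\sigma,c$ and $G$ are $\cC_b^{1,2}$ in the measure argument (Assumption~\ref{assum:differentiability}), the lifted coefficients $(t,(y,z),\rho)\mapsto(b,\sigma,c)(t,y,\cT^*\rho)$ and the lifted terminal $\rho\mapsto(G\circ\cT^*)(\rho)$ are $\cC_b^{1,2}$ on $\cP_2(\R^d\times[0,C'])$, with e.g.\ $\delta_\rho(G\circ\cT^*)(\rho,(y,z))=z\,\delta_\mu G(\cT^*\rho,y)$ and the analogous identities for the higher and intrinsic derivatives, all bounded for $z\in[0,C']$. The theory of \cite{10.1214/15-AOP1076} (first-order derivatives and the master PDE) together with \cite{chassagneux2022weak} (second-order bounds) then yields $\mathbf V\in\cC_b^{1,2}\big([0,T]\times\cP_2(\R^d\times[0,C'])\big)$, together with $\partial_t\mathbf V+\langle\widetilde{\cL}\,\delta_\rho\mathbf V,\rho\rangle=0$ and $\mathbf V(T,\cdot)=G\circ\cT^*$, where $\widetilde{\cL}$ is the frozen-measure generator of \eqref{lifted_MeanField}.

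For parts (i) and (ii) I would transfer this regularity to $U$ using, for each $C>0$, the \emph{affine} right-inverse $\Phi_C$ of $\cT^*$ on $\mathcal{B}_2(C)$ given by $\Phi_C\mu:=\frac{1}{C}\,\mu\otimes\delta_C+\big(1-\frac{\mu(\R^d)}{C}\big)\delta_{(0,0)}$, which satisfies $\cT^*\Phi_C=\Id$ and $\Phi_C(\lambda\mu+(1-\lambda)\nu)=\lambda\Phi_C\mu+(1-\lambda)\Phi_C\nu$. By Lemma~\ref{lem:mollify}, $\mathbf V(t,\rho)=U(t,\cT^*\rho)$ for every $\rho$ (both equal $G(\cT^*\rho_T^{t,\rho})$ and $\cT^*\rho_T^{t,\rho}$ depends on $\rho$ only through $\cT^*\rho$), hence $U(t,\mu)=\mathbf V(t,\Phi_C\mu)$. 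Differentiating this identity along linear interpolations of measures and using that $\Phi_C$ is affine with $\cT^*\Phi_C=\Id$, one obtains explicit formulas such as
\[
\delta_\mu U(t,\mu,y)=\frac{1}{C}\Big[\delta_\rho\mathbf V\big(t,\Phi_C\mu,(y,C)\big)-\delta_\rho\mathbf V\big(t,\Phi_C\mu,(0,0)\big)\Big],\qquad D_\mu U(t,\mu,y)=\frac{1}{C}D_y\delta_\rho\mathbf V\big(t,\Phi_C\mu,(y,C)\big),
\]
and similarly for $D_x^2\delta_\mu U$, $\delta_\mu^2 U$, $D_\mu^2 U$ and $\partial_t U$ in terms of the matching derivatives of $\mathbf V$. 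Since $\|\delta_\rho\mathbf V\|_\infty\lesssim C'\|\delta_\mu G\|_\infty$ and $C'=Ce^{\bar\gamma MT}$, the factor $1/C$ cancels, giving the boundedness claimed in (ii) on each $\mathcal{B}_2(C)$; global continuity of these derivatives (hence $U\in\cC^{1,2}$ on $[0,T]\times(\cM_{2+\delta}(\R^d)\setminus\{0\})$) follows from continuity of the derivatives of $\mathbf V$ together with the continuity of the lift, where the $(2+\delta)$th-moment hypothesis is exactly what upgrades $\mathbf{d}$-convergence of measures to $\cW_2$-convergence of their lifts (cf.~Lemma~\ref{lifting_inverse_continuity}); the exclusion of $\{0\}$ is forced by the same point.

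For part (iii) I would not push the PDE of $\mathbf V$ down to $U$ but argue directly from the flow property \eqref{flow_lifting}. Fix $t_0\in[0,T)$ and $\mu\in\cM_{2+\delta}(\R^d)\setminus\{0\}$, and set $\nu_r:=\tilde\mu_r^{t_0,\mu}$ for $r\in[t_0,T]$. On one hand $U(r,\nu_r)=G\big(\tilde\mu_T^{r,\nu_r}\big)=G\big(\tilde\mu_T^{t_0,\mu}\big)=U(t_0,\mu)$ is constant in $r$. On the other hand $(\nu_r)$ is $\cM_{2+\delta}$-valued with mass bounded above and below away from $0$, and solves the nonlinear Fokker--Planck equation \eqref{F-P_duality} (Lemma~\ref{lem:FKP}); so combining part (i) with the deterministic It\^o formula \eqref{ito_environment} (valid along any flow solving \eqref{F-P_duality}) applied to $U$ along $(\nu_r)$ — after the routine localization replacing $U$ by a globally $\cC_b^{1,2}$ functional agreeing with $U$ on the bounded-mass, $\cM_{2+\delta}$ region swept by $(\nu_r)$, which is legitimate by (ii) — yields
\[
0=\frac{\d}{\d r}U(r,\nu_r)=\partial_t U(r,\nu_r)+\Big\langle\cL\frac{\delta U}{\delta\mu}+\gamma\Big(\sum_l l p_l-1\Big)\frac{\delta U}{\delta\mu},\,\nu_r\Big\rangle.
\]
Evaluating at $r=t_0$ gives the equation in \eqref{eq:bKol}, and the terminal condition is immediate from $\tilde\mu_T^{T,\mu}=\mu$.

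The main obstacle is the transfer step of the second paragraph: one must make the chain rule through the linear but mass-altering projection $\cT^*$ (equivalently through the affine sections $\Phi_C$) rigorous at the level of second-order linear \emph{and} intrinsic derivatives, including the Hessian $D_x^2\delta_\mu U$, and reconcile a genuine metric mismatch — $\cT^*$ is $\cW_p$-to-$\mathbf{d}$ Lipschitz, but the sections $\Phi_C$ are badly discontinuous from $(\cM_2,\mathbf{d})$ to $(\cP_2,\cW_2)$ unless moments are controlled, which is precisely why the statement is restricted to $\cM_{2+\delta}\setminus\{0\}$ and why Lemma~\ref{lifting_inverse_continuity} is needed. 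A secondary point is to confirm that the localized lifted system falls under the hypotheses of \cite{10.1214/15-AOP1076} despite its diffusion being degenerate in the $z$-direction; this is harmless because the $\cC^{1,2}$-flow regularity there is obtained by differentiating the SDE flow and does not use ellipticity — the non-degeneracy Assumption~\ref{assum:nondegen} enters only upstream, through Lemma~\ref{lem:mollify}, to make $U$ well defined.
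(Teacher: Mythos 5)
Your overall architecture is the one the paper uses: lift the branching flow to the non-branching McKean--Vlasov system \eqref{lifted_MeanField}, define the lifted value function (your $\mathbf V$ is the paper's $\tilde U$), import its $\cC^{1,2}$-regularity from \cite{10.1214/15-AOP1076}-type flow differentiation (where, as you correctly observe, ellipticity is not needed and Assumption~\ref{assum:nondegen} only enters through Lemma~\ref{lem:mollify}), and obtain the PDE in (iii) exactly as the paper does, from constancy of $U$ along $\tilde\mu^{t,\mu}_\cdot$ plus the It\^o formula \eqref{ito_environment}, which indeed only uses the Fokker--Planck identity \eqref{F-P_duality}; your localization remark there is a point the paper leaves implicit. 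Where you genuinely deviate is the transfer step from $\mathbf V$ back to $U$: the paper works with the section $\Phi\mu=\bar\mu\otimes\delta_{\mu(\R^d)}$ and spends Lemmas~\ref{regularity_G_lifting}, \ref{intrinsic_is_linear}, Proposition~\ref{linear_is_linear} and Appendix~\ref{proof_intrinsic_is_linear} proving that the lifted derivatives are proportional to $z$ (by analyzing the SDEs satisfied by $\partial_z$ and $D_\rho$ of the flow), which then yields $\delta_\rho\tilde U(t,\Phi\mu,(y,z))=z\,\delta_\mu U(t,\mu,y)$; you instead use the affine section $\Phi_C\mu=\frac1C\mu\otimes\delta_C+(1-\mu(\R^d)/C)\delta_{(0,0)}$, note $U(t,\cdot)=\mathbf V(t,\Phi_C\cdot)$ on $\mathcal B_2(C)$ via Lemma~\ref{lem:mollify}, and read off $\delta_\mu U$, $\delta^2_\mu U$, $D_\mu U$, etc., by a direct chain rule through the affine map. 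This is a clean and correct alternative: affinity of $\Phi_C$ makes the interpolation identity exact, the formulas you display are right, the constants may depend on $C$ so your boundedness argument for (ii) is fine, and your section is even better behaved near $\mu=0$ than the paper's normalized one, while the $(2+\delta)$-moment/continuity issue you flag is exactly the content of Lemma~\ref{lifting_inverse_continuity}. The trade-off is that your route does not eliminate the lifted-regularity work: you still need existence, boundedness and joint continuity of the first- and second-order \emph{linear} derivatives of $\mathbf V$ on $\cP_2(\R^d\times[0,C'])$ (not just the intrinsic ones), which in the paper is precisely what the $z$-proportionality analysis together with \cite[Propositions 5.50, 5.51]{carmona2018probabilistic} delivers; you outsource this to \cite{10.1214/15-AOP1076,chassagneux2022weak} in one sentence, so if this were written out in full, that step (including the $z$-degenerate, restricted-domain setting and your truncation $\beta(z)$ of the linear drift, which is legitimate but must be checked to leave the flow on $\cP_2(\R^d\times[0,C])$ unchanged) is where the remaining work would concentrate.
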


Notice that in above, for $\nu\in\cM_{2+\delta}(\R^d)$ and $g:[0,T]\times\R^d\times\cM_{2+\delta}(\R^d)\to\R$, we denote
\[\langle g,\nu\rangle:=\int_{\R^d} g(t,x,\nu)\,\nu(\d x). \]

\subsection{Proof of Proposition~\ref{prop_bKol}}\label{proof_value_regularity}
Firstly, we observe, for each $g\in\cC_b (\R^d\times[0,\infty))$, that (recall \eqref{lifting_inverse})
\[\langle g,\Phi\mu\rangle=
    \begin{cases}
        g(0,0), &\text{if\quad}\mu=0,\\
        \frac{1}{\mu(\R^d)}\big\langle g(\cdot,\mu(\R^d)),\mu\big\rangle, &\text{otherwise.}
    \end{cases}\]
Hence, combining the above with \eqref{function_dual} we can lift or project functionals as follows.

\begin{lem}\label{regularity_G_lifting}
    Let $G\in\cC_b^2(\cM_2(\R^d))$ and $F\in\cC_b^2(\cP_2(\R^d\times[0,C]))$ be differentiable functionals, and $\tilde{G}:=G\circ\cT^*,\,\hat{F}:=F\circ\Phi$ their ``lifted'' and ``projected'' functionals, respectively.
    Suppose there exists a first-order linear derivative $\delta_\rho F:=\delta_\rho F(\rho,(y,z))$ of $F$, which is in direct proportion to $z$, \ie~$\partial_z^2\delta_\rho F=0,\,\delta_\rho F(\rho,(y,0))\equiv0$, and a second-order linear derivative 
    \[\delta_\rho^2 F:=\delta_\rho^2 F(\rho,(y,z),(y^\prime,z^\prime))\]
    in direct proportion to $zz^\prime$. 
    Then it holds that:\\ 
    \emph{(i)} there exist derivatives $\delta_\rho\tilde{G}$ and $\delta_\rho^2\tilde{G}$ of $\tilde{G}$, such that  
    \begin{equation}\label{first_transform}
        \delta_\rho \tilde{G}(\rho,(y,z))=\,\delta_\mu G(\cT^*\rho,y)z
    \end{equation}
    and
    \begin{equation}\label{second_transform}
        \delta_\rho^2 \tilde{G}\big(\rho,(y,z),(y^\prime,z^\prime)\big)=\,\delta_\mu^2 G(\cT^*\rho,y,y^\prime)zz^\prime
    \end{equation}
    for all $\rho\in\cP_2(\R^d\times[0,C]),\,(y,z),(y^\prime,z^\prime)\in\R^d\times[0,C]$;\\
    \emph{(ii)} there exist derivatives $\delta_\mu\hat{F}$ and $\delta_\mu^2\hat{F}$ of $\hat{F}$, such that  
    \begin{equation}\label{first_inverse_transform}
        \delta_\rho F(\Phi\mu,(y,z))=\,\delta_\mu \hat{F}(\mu,y)z
    \end{equation}
    and
    \begin{equation}\label{second_inverse_transform}
        \delta_\rho^2 F\big(\Phi\mu,(y,z),(y^\prime,z^\prime)\big)=\,\delta_\mu^2 \hat{F}(\mu,y,y^\prime)zz^\prime
    \end{equation}
    for all $\mu\in\cM_2(\R^d),\,(y,z),(y^\prime,z^\prime)\in\R^d\times[0,C]$.\\
\end{lem}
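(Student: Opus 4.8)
The plan is to exhibit the derivatives named in the statement explicitly and verify they satisfy Definition~\ref{derivatives}, the only genuine work lying in part (ii), where $\Phi$ fails to be linear. The two structural inputs are the duality $\langle\varphi,\cT^*\sigma\rangle=\langle\cT\varphi,\sigma\rangle$ from \eqref{function_dual}, extended by linearity to signed $\sigma$, and the identity $\cT^*\Phi\mu=\mu$ for $\mu\in\cM_2(\R^d)$ noted after \eqref{lifting_inverse}. The at most quadratic growth (in fact boundedness on $\R^d\times[0,C]$) and the global continuity of the derivatives constructed below will be immediate from $G,F\in\cC_b^2$ together with continuity of $\cT^*$ and of $\Phi$ on $\cM_2(\R^d)\setminus\{0\}$.

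For part (i) I would simply push the linear-derivative identity of $G$ through $\cT^*$. Since $\cT^*$ is linear, for $\rho,\rho'\in\cP_2(\R^d\times[0,C])$ and $\sigma_\lambda:=\lambda\rho+(1-\lambda)\rho'$ we have $\cT^*\sigma_\lambda=\lambda\cT^*\rho+(1-\lambda)\cT^*\rho'\in\cM_2(\R^d)$, so applying the linear-derivative identity of $G$ between $\cT^*\rho,\cT^*\rho'$ and then the duality,
\[ \tilde G(\rho)-\tilde G(\rho')=\int_0^1\big\langle\delta_\mu G(\cT^*\sigma_\lambda,\cdot),\cT^*(\rho-\rho')\big\rangle\,\d\lambda=\int_0^1\big\langle\cT\big[\delta_\mu G(\cT^*\sigma_\lambda,\cdot)\big],\rho-\rho'\big\rangle\,\d\lambda. \]
Since $\cT[\delta_\mu G(\cT^*\sigma_\lambda,\cdot)](y,z)=\delta_\mu G(\cT^*\sigma_\lambda,y)\,z$, this is exactly the linear-derivative identity for $\tilde G$ with derivative \eqref{first_transform}; formula \eqref{second_transform} follows by running the same computation for $\nu\mapsto\delta_\mu G(\nu,y)$ at each fixed $y$ (whose linear derivative is $\delta_\mu^2 G(\nu,y,\cdot)$) and multiplying through by the frozen value $z$.

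Part (ii) is the crux. Writing the proportionality hypotheses as $\delta_\rho F(\rho,(y,z))=\varphi_\rho(y)z$ and $\delta_\rho^2 F(\rho,(y,z),(y',z'))=\psi_\rho(y,y')zz'$, I would \emph{define} $\delta_\mu\hat F(\mu,y):=\varphi_{\Phi\mu}(y)$ and $\delta_\mu^2\hat F(\mu,y,y'):=\psi_{\Phi\mu}(y,y')$, so that \eqref{first_inverse_transform}--\eqref{second_inverse_transform} hold tautologically, and it remains to prove the integral identities. Because $\Phi$ is not linear I would not substitute $\rho=\Phi\mu,\rho'=\Phi\nu$ into the identity for $F$, but instead differentiate along the segment: fixing $\mu,\nu\in\cM_2(\R^d)$ and $\mu_\lambda:=\lambda\mu+(1-\lambda)\nu$, applying the linear-derivative identity of $F$ between $\Phi\mu_{\lambda+h}$ and $\Phi\mu_\lambda$ and using $\delta_\rho F(\rho,\cdot)=\cT\varphi_\rho$, the duality, and $\cT^*\Phi=\mathrm{id}$ gives, with $\tau_\theta^h:=\theta\Phi\mu_{\lambda+h}+(1-\theta)\Phi\mu_\lambda$,
\[ \hat F(\mu_{\lambda+h})-\hat F(\mu_\lambda)=\int_0^1\big\langle\varphi_{\tau_\theta^h},\,\cT^*\Phi\mu_{\lambda+h}-\cT^*\Phi\mu_\lambda\big\rangle\,\d\theta=h\int_0^1\big\langle\varphi_{\tau_\theta^h},\,\mu-\nu\big\rangle\,\d\theta. \]
When $\mu_\lambda\ne 0$ the map $\Phi$ is continuous at $\mu_\lambda$ (its normalization and the location of the Dirac in $z$ both vary continuously there), so $\tau_\theta^h\to\Phi\mu_\lambda$ as $h\to 0$; since $\varphi_\cdot$ is continuous and bounded, dividing by $h$ and letting $h\to 0$ shows $\lambda\mapsto\hat F(\mu_\lambda)$ is differentiable with $\tfrac{\d}{\d\lambda}\hat F(\mu_\lambda)=\langle\delta_\mu\hat F(\mu_\lambda,\cdot),\mu-\nu\rangle$, and integrating over $[0,1]$ yields \eqref{first_inverse_transform}. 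Identity \eqref{second_inverse_transform} is obtained in the same way by differentiating $\lambda\mapsto\delta_\mu\hat F(\mu_\lambda,y)=\delta_\rho F(\Phi\mu_\lambda,(y,1))$ at each fixed $y$, using that $\delta_\rho^2 F(\cdot,(y,1),\cdot)$ is a linear derivative of $\delta_\rho F(\cdot,(y,1))$ and that its proportionality to $z'$ again makes the $\cT^*\Phi=\mathrm{id}$ cancellation go through.

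The hard part will be the degenerate endpoints of the segment. Integrating $\tfrac{\d}{\d\lambda}\hat F(\mu_\lambda)$ over $[0,1]$ is legitimate only once $\hat F$ is known to be continuous on the whole closed segment, and the one nontrivial point is an endpoint at which $\mu_\lambda=0$ (which can happen only if $\mu=0$ or $\nu=0$): there one exploits that $\delta_\rho F(\rho,(y,0))\equiv 0$ forces $F$ to be constant on measures supported in $\R^d\times\{0\}$, whence $\lambda\mapsto F(\Phi\mu_\lambda)$ has the correct limit at that endpoint; the corresponding point for \eqref{second_inverse_transform} uses instead the vanishing of $\delta_\rho^2 F$ at $z'=0$. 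Apart from this, and the routine $\cW_2$-continuity of $\Phi$ away from $0$ that is needed for $\tau_\theta^h\to\Phi\mu_\lambda$, everything reduces to bookkeeping with the explicit forms of $\cT^*$ and $\Phi$ and the $\cC_b^2$ bounds on $G$ and $F$.
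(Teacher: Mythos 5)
Your proposal is correct, and part (i) coincides with the paper's proof verbatim: push $G$'s linear-derivative identity through the linear map $\cT^*$ via the duality \eqref{function_dual}, then iterate for the second order. For part (ii) you use the same core mechanism as the paper — the proportionality of $\delta_\rho F$ (resp.\ $\delta_\rho^2 F$) to $z$ (resp.\ $zz'$) together with $\cT^*\Phi=\mathrm{Id}$ turns pairings against differences of lifted measures into pairings against differences in $\cM_2(\R^d)$ — but you execute it differently. The paper substitutes the endpoints $\Phi\mu,\Phi\mu_0$ directly into $F$'s linear-derivative identity, obtaining $\hat F(\mu)-\hat F(\mu_0)=\int_0^1\langle\partial_z\delta_\rho F(\lambda\Phi\mu+(1-\lambda)\Phi\mu_0,\cdot),\mu-\mu_0\rangle\,\d\lambda$ and then reads off $\delta_\mu\hat F(\mu,\cdot)=\partial_z\delta_\rho F(\Phi\mu,\cdot)$; since $\Phi$ is not affine, the measure argument there is $\lambda\Phi\mu+(1-\lambda)\Phi\mu_0$ rather than $\Phi(\lambda\mu+(1-\lambda)\mu_0)$, so this last identification is stated without the (routine but nontrivial) localization it implicitly needs. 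Your route — differentiating $\lambda\mapsto\hat F(\mu_\lambda)$ along the straight segment in $\cM_2(\R^d)$ and integrating back — produces the Definition~\ref{derivatives} identity with exactly the right argument $\Phi\mu_\lambda$, at the price of invoking $\cW_2$-continuity of $\Phi$ along the segment and the separate treatment of a degenerate endpoint via $\delta_\rho F(\rho,(y,0))\equiv0$ (resp.\ the vanishing of $\delta_\rho^2 F$ at $z'=0$), both of which you handle correctly; in effect your argument supplies the continuity/localization step that the paper's ``which yields'' compresses. The two approaches buy, respectively, brevity (the paper) and a fully explicit match with the definition of the linear derivative plus a clean account of the singular point $\mu=0$ (yours).
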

\begin{proof}
    \emph{(i)}
    For $G:\cC_b^2(\cM_2(\R^d)),\,\rho,\rho_0\in\cP_2(\R^d\times[0,C])$, it holds, by definition of linear derivative,
    \[
        \begin{aligned}
            \tilde{G}(\rho)-\tilde{G}(\rho_0)=&\,G(\cT^*\rho)-G(\cT^*\rho_0)\\
            =&\,\int_0^1\Big\langle\delta_\mu G\big(\lambda\cT^*\rho+(1-\lambda)\cT^*\rho_0,\cdot\big),\cT^*\rho-\cT^*\rho_0\Big\rangle\,\d\lambda\\
            =&\,\int_0^1\Big\langle\cT\delta_\mu G\big(\cT^*(\lambda\rho+(1-\lambda)\rho_0)\big)(\cdot),\rho-\rho_0\Big\rangle\,\d\lambda.
        \end{aligned}
    \]
    Hence, we have, for all $\rho\in\cP_2(\R^d\times[0,C])$,
    \[\frac{\delta \tilde{G}}{\delta\rho}(\rho,\cdot)=\cT\frac{\delta G}{\delta\mu}(\cT^*\rho,\cdot),\]
    which yields \eqref{first_transform}.
    Analogously, \eqref{second_transform} holds, and the proof is complete.\\
    \emph{(ii)}
    For $F:\cC_b^2(\cP_2(\R^d\times[0,C])),\,\mu,\mu_0\in\cM_2(\R^d)$, it holds, by definition of linear derivative,
    \[
        \begin{aligned}
            \hat{F}(\mu)-\hat{F}(\mu_0)=&\,F(\Phi\mu)-F(\Phi(\mu_0))\\
            =&\,\int_0^1\Big\langle\delta_\rho F\big(\lambda\Phi\mu+(1-\lambda)\Phi(\mu_0),\cdot\big),\Phi\mu-\Phi(\mu_0)\Big\rangle\,\d\lambda.
        \end{aligned}
    \]
    Note that
    \[
    \begin{aligned}
        &\, \Big\langle\delta_\rho F\big(\lambda\Phi\mu+(1-\lambda)\Phi(\mu_0),\cdot\big),\Phi\mu\Big\rangle\\
        =\,&
    \begin{cases}
        \delta_\rho F\big(\lambda\Phi\mu+(1-\lambda)\Phi(\mu_0),0,0\big), &\text{if\quad}\mu=0,\\
        \frac{1}{\mu(\R^d)}\Big\langle \delta_\rho F\big(\lambda\Phi\mu+(1-\lambda)\Phi(\mu_0),\cdot,\mu(R^d)\big),\mu\Big\rangle, &\text{otherwise.}
    \end{cases}\\
    =\,&\Big\langle\partial_z\delta_\rho F\big(\lambda\Phi\mu+(1-\lambda)\Phi(\mu_0)\big)(\cdot),\mu\Big\rangle,
    \end{aligned}\]
    and similarly that
    \[
        \Big\langle\delta_\rho F\big(\lambda\Phi\mu+(1-\lambda)\Phi(\mu_0),\cdot\big),\Phi(\mu_0)\Big\rangle=\,\Big\langle\partial_z\delta_\rho F\big(\lambda\Phi\mu+(1-\lambda)\Phi(\mu_0)\big)(\cdot),\mu_0\Big\rangle,\]
    Hence, we have
    \[
        \hat{F}(\mu)-\hat{F}(\mu_0)=\,\int_0^1\Big\langle\partial_z\delta_\rho F\big(\lambda\Phi\mu+(1-\lambda)\Phi(\mu_0)\big)(\cdot),\mu-\mu_0\Big\rangle\,\d\lambda.
    \]
    which yields $\delta_\mu\hat{F}(\mu,\cdot)=\partial_z\delta_\rho F(\Phi\mu,\cdot)$ and thus \eqref{first_inverse_transform}.
    Analogously, \eqref{second_inverse_transform} holds, and the proof is complete.
    \qed
\end{proof}

We also show that $\Phi$ is continuous on $\cM_{2+\delta}(\R^d)\setminus\{0\}$ for all $\delta>0$.
\begin{lem}\label{lifting_inverse_continuity}
    Let $\{\nu_k\}_{k\geq1}\subset \cM_2(\R^d)$ be a sequence converging to some $\nu\in\cM_2(\R^d)\setminus\{0\}$ in $\mathbf{d}$.
    If $\limsup_{k\to\infty}\nu_k(\abs{\cdot}^{2+\delta})<\infty$ for some $\delta>0$, then it holds that
    $\Phi\nu_k$ converges to $\Phi\nu$ in $\cW_2$.
\end{lem}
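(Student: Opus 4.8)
The plan is to exploit the product structure $\Phi\nu=\ol{\nu}\otimes\delta_{\nu(\R^d)}$: I would show that the normalized probability measures $\ol{\nu}_k$ converge to $\ol{\nu}$ in $\cW_2(\R^d)$ and that the Dirac factors $\delta_{\nu_k(\R^d)}$ converge to $\delta_{\nu(\R^d)}$ in $\cW_2([0,\infty))$, and then combine the two via the elementary bound $\cW_2^2(\mu^1\otimes\mu^2,\rho^1\otimes\rho^2)\le\cW_2^2(\mu^1,\rho^1)+\cW_2^2(\mu^2,\rho^2)$ for the Euclidean distance on $\R^d\times[0,\infty)\subset\R^{d+1}$, which follows by forming the product of optimal couplings.

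First I would note that, by \cite[Theorem~8.3.2]{bogachev2007measure}, convergence in $\mathbf{d}$ is precisely weak convergence of finite measures; testing against the constant $1$ gives $m_k:=\nu_k(\R^d)\to m:=\nu(\R^d)$, and $m>0$ because $\nu\neq0$. Hence $m_k>0$ for all $k$ large, so $\ol{\nu}_k=\nu_k/m_k$ is eventually well-defined, and for every $f\in\cC_b(\R^d)$ we have $\langle f,\ol{\nu}_k\rangle=\langle f,\nu_k\rangle/m_k\to\langle f,\nu\rangle/m=\langle f,\ol{\nu}\rangle$, i.e.\ $\ol{\nu}_k\to\ol{\nu}$ weakly.

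The quantitative input is the $(2+\delta)$-moment control: since $\nu_k(\abs{\cdot}^{2+\delta})$ stays bounded by hypothesis while $m_k\to m>0$, we get $\sup_k\ol{\nu}_k(\abs{\cdot}^{2+\delta})<\infty$, and then $\abs{x}^2\mathbbm{1}_{\{\abs{x}>R\}}\le R^{-\delta}\abs{x}^{2+\delta}$ shows that $\{\abs{\cdot}^2\}$ is uniformly integrable along $(\ol{\nu}_k)_k$. Weak convergence together with this uniform integrability yields $\ol{\nu}_k\to\ol{\nu}$ in $\cW_2(\R^d)$ by the standard characterization of Wasserstein convergence (e.g.\ \cite[Theorem~6.9]{villani2008optimal}). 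Since $\cW_2(\delta_{m_k},\delta_m)=\abs{m_k-m}\to0$ trivially, the product bound above gives $\cW_2^2(\Phi\nu_k,\Phi\nu)\le\cW_2^2(\ol{\nu}_k,\ol{\nu})+\abs{m_k-m}^2\to0$, which is the claim.

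I do not expect a serious obstacle; the only delicate point — and the reason the hypotheses $\nu\neq0$ and the uniform $(2+\delta)$-moment bound are imposed — is that every step concerning $\ol{\nu}_k$ involves dividing by $m_k$, which must therefore stay bounded away from $0$, and that upgrading weak convergence to $\cW_2$-convergence requires a moment bound uniform in $k$.
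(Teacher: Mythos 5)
Your proposal is correct and follows essentially the same route as the paper: decompose $\cW_2(\Phi\nu_k,\Phi\nu)$ via the product structure into the normalized-measure part and the mass part, get $\nu_k(\R^d)\to\nu(\R^d)>0$ from convergence in $\mathbf{d}$, deduce weak convergence of $\bar{\nu}_k$ to $\bar{\nu}$, and upgrade it to $\cW_2$-convergence using the uniform $(2+\delta)$-moment bound together with the Villani characterization. The only cosmetic difference is that you invoke uniform integrability of $\abs{\cdot}^2$ abstractly, whereas the paper writes out the same truncation/Chebyshev estimate explicitly to verify convergence of second moments.
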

\begin{proof}
    By definition of Wasserstein distances, it holds that
    \[
    \begin{aligned}
        \cW_2(\Phi\nu_k,\Phi\nu)\leq\,& \sqrt{\cW_2(\bar{\nu_k},\bar{\nu})^2+\cW_2(\delta_{\nu_k(\R^d)},\delta_{\nu(\R^d)})^2}\\
        \leq\,&\cW_2(\bar{\nu_k},\bar{\nu})+\abs{\nu_k(\R^d)-\nu(\R^d)}.
    \end{aligned}\]
    Hence, it suffices to prove $\cW_2(\bar{\nu_k},\bar{\nu})\to0$ and $\abs{\nu_k(\R^d)-\nu(\R^d)}\to 0$.
    By definition of $\mathbf{d}$, we have 
    \[\abs{\nu_k(\R^d)-\nu(\R^d)}=\abs{\langle1,\nu_k-\nu\rangle}\leq \mathbf{d}(\nu_k,\nu)\to0. \]
    Thus their exists some $N_0>0$ such that $0<\nu(\R^d)/2\leq\nu_k\leq 3\nu(\R^d)/2$ for all $k\geq N_0$.
    For each $k\geq N_0$ and Lipschitz function $f$ on $\R^d$ such that $\norm{f}_{\infty},\norm{f}_{\text{Lip}}\leq 1$, we have
    \[
        \langle f,\bar{\nu_k}-\bar{\nu}\rangle=\, \frac{1}{\nu_k(\R^d)}\langle f,\nu_k-\nu\rangle+\langle f,\nu\rangle(\frac{1}{\nu_k(\R^d)}-\frac{1}{\nu(\R^d)}),\]
    and thus 
     \[
    \begin{aligned}
        \mathbf{d}(\bar{\nu_k},\bar{\nu})=\,&\sup_{\norm{f}_{\infty},\norm{f}_{\text{Lip}}\leq 1}\langle f,\bar{\nu_k}-\bar{\nu}\rangle\\
        \leq\,&\frac{1}{\nu_k(\R^d)}\mathbf{d}(\nu_k,\nu)+\nu(\R^d)\abs{\frac{1}{\nu_k(\R^d)}-\frac{1}{\nu(\R^d)}}\to0.
    \end{aligned}\]
    We observe that $\mathbf{d}$ coincides with the $1$-Wasserstein distance with respect to the metric $\abs{\cdot}\wedge 2$, which induces the same topology as $\abs{\cdot}$.
    Hence, for sequences in $\cP_2(\R^d)$, convergence in $\mathbf{d}$ yields the weak convergence, and by \cite[Definition 6.8 and Theorem 6.9]{villani2008optimal}, it suffices to prove
    \[\lim_{k\to\infty}\bar{\nu_k}(\abs{\cdot}^2)=\bar{\nu}(\abs{\cdot}^2).\]
    Since $\limsup_{k\to\infty}\nu_k(\abs{\cdot}^{2+\delta}) <\infty,\,\lim_{k\to\infty}\nu_k(\R^d)=\nu(\R^d)>0$, we have
    \[\limsup_{k\to\infty}\bar{\nu_k}(\abs{\cdot}^{2+\delta})<\infty.\]
    Without loss of generality, we assume
    \[\sup_{k\geq1}\bar{\nu_k}(\abs{\cdot}^{2+\delta})\leq C\]
    for some $C\in[0,\infty)$, and thus $\bar{\nu}(\abs{\cdot}^{2+\delta})\leq C$.
    For each $M\geq1$, we have. by triangle inequality and Chebyshev's inequality,
    \[
    \begin{aligned}
        \abs{\bar{\nu_k}(\abs{\cdot}^2)-\bar{\nu}(\abs{\cdot}^2)}\leq\,& \abs{\bar{\nu_k}(\abs{\cdot}^2)-\bar{\nu_k}(\abs{\cdot}^2\wedge M)}+\abs{\bar{\nu_k}(\abs{\cdot}^2\wedge M)-\bar{\nu}(\abs{\cdot}^2\wedge M)}\\
        &\,+\abs{\bar{\nu}(\abs{\cdot}^2\wedge M)-\bar{\nu}(\abs{\cdot}^2)}\\
        =\,& \abs{\bar{\nu_k}(\abs{\cdot}^2\wedge M)-\bar{\nu}(\abs{\cdot}^2\wedge M)}\\
        &\,+\bar{\nu_k}(\abs{\cdot}^2\mathbbm{1}_{\abs{\cdot}>M})+\bar{\nu}(\abs{\cdot}^2\mathbbm{1}_{\abs{\cdot}>M})\\
        \leq\,&\abs{\bar{\nu_k}(\abs{\cdot}^2\wedge M)-\bar{\nu}(\abs{\cdot}^2\wedge M)}\\
        &\,+M^{-\delta}\big(\bar{\nu_k}(\abs{\cdot}^{2+\delta})+\bar{\nu}(\abs{\cdot}^{2+\delta})\big)\\
        \leq\,&\abs{\bar{\nu_k}(\abs{\cdot}^2\wedge M)-\bar{\nu}(\abs{\cdot}^2\wedge M)}+2CM^{-\delta}.
    \end{aligned}\]
    Note that $\abs{\cdot}^2\wedge M$ is bounded Lipschitz, and thus we let $k\to\infty$ and obtain
    \[\limsup_{k\to\infty}\abs{\bar{\nu_k}(\abs{\cdot}^2)-\bar{\nu}(\abs{\cdot}^2)}\leq 2CM^{-\delta}.\]
    Let $M\to\infty$, and we have $\bar{\nu_k}(\abs{\cdot}^2)\to \bar{\nu}(\abs{\cdot}^2)$.
\qed
\end{proof}

Now we turn back to regularity of $U$.
Let $\tilde{U}(t,\rho):=\tilde{G}(\rho_T^{t,\rho})=G(\cT^*\rho_T^{t,\rho})$, and by \eqref{flow_lifting_defn} and \eqref{value_function} we have, for each $\mu\in\cM_2(\R^d),\,\rho\in\cP_2(\R^d\times [0,C])$,
    \[\tilde{U}(t,\Phi\mu)=U(t,\mu),\quad \tilde{U}(t,\rho)=U(t,\cT^*\rho),\quad t\in[0,T].\]
Note that regularity of $\tilde{G}$ holds by Lemma~\ref{regularity_G_lifting}, and thus intrinsic derivatives $D_\rho\tilde{U}, D_\rho^2\tilde{U}$ exist (cf.~\cite[Lemmas 6.1, 6.2]{10.1214/15-AOP1076}).
The following argument addresses the regularity of intrinsic derivatives, which consequently yields the existence of linear derivatives $\delta_\mu U,\delta_\mu^2 U$.

\begin{lem}\label{intrinsic_is_linear}
    Suppose Assumptions \ref{assum:coefficients}, \ref{assum:nondegen} and \ref{assum:differentiability} hold.
    Then for all $t\in[0,T]$, $\rho\in\cP_2(\R^d\times[0,C])$, it holds that:\\
    \emph{(i)} for $1\leq i\leq d$, the $i$th entry of the intrinsic derivative $D_\rho \tilde{U}$ is in direct proportion to the $(d+1)$th variable $z$, \ie~it assumes the form $(D_\rho\tilde{U})_i(t,\rho,(y,z))=f_i(t,\rho,y)z$;\\
    \emph{(ii)} the $(d+1)$th entry of $D_\rho \tilde{U}$ is independent of $z$, \ie~$\partial_z(D_\rho\tilde{U})_{d+1}(t,\rho,(y,z))\equiv0$.
\end{lem}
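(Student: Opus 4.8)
The key input is the identity $\tilde U(t,\rho)=U(t,\cT^*\rho)$ established just above (which relies on Lemma~\ref{lem:mollify}): the lifted value function $\tilde U(t,\cdot)$ depends on $\rho$ only through its projection $\cT^*\rho$. Since $D_\rho\tilde U$ exists (by Lemma~\ref{regularity_G_lifting} together with \cite[Lemmas~6.1, 6.2]{10.1214/15-AOP1076}) and, by Definition~\ref{derivatives}, $D_\rho\tilde U(t,\rho,\cdot)=\nabla_v\delta_\rho\tilde U(t,\rho,\cdot)$ for a linear derivative $\delta_\rho\tilde U$ of $\tilde U$ that is jointly continuous, it suffices to show that $\delta_\rho\tilde U(t,\rho,(y,z))=a(t,\rho)+\psi(t,\rho,y)\,z$ on $\R^d\times[0,C]$ for some functions $a,\psi$; then, because $D_\rho\tilde U$ exists, $\psi(t,\rho,\cdot)$ is differentiable and $D_\rho\tilde U(t,\rho,(y,z))=\big(z\,\nabla_y\psi(t,\rho,y),\,\psi(t,\rho,y)\big)$, which is precisely (i) (with $f_i=\partial_{y_i}\psi$) and (ii).

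To pin down $\delta_\rho\tilde U$ I would use that $\tilde U$ must annihilate the ``$\cT^*$-null'' perturbation directions. For $\rho,\rho'\in\cP_2(\R^d\times[0,C])$ with $\cT^*\rho=\cT^*\rho'$ we have $\tilde U(t,\rho)=\tilde U(t,\rho')$, so the defining identity of the linear derivative gives $\int_0^1\langle\delta_\rho\tilde U(t,\lambda\rho+(1-\lambda)\rho',\cdot),\rho-\rho'\rangle\,\d\lambda=0$. Letting $\rho'\to\rho$ inside the affine fibre $\{\rho'':\cT^*\rho''=\cT^*\rho\}$ and using continuity of $\delta_\rho\tilde U$ yields $\langle\delta_\rho\tilde U(t,\rho,\cdot),\chi\rangle=0$ for every signed measure $\chi$ with $\cT^*\chi=0$ and $\chi^-\ll\rho$. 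For $\rho$ of full support on $\R^d\times[0,C]$ this family is rich: the ``discrete second difference in $z$ at fixed $y$'' perturbations force $z\mapsto\delta_\rho\tilde U(t,\rho,(y,z))$ to be affine, while the ``compensated $y$-transport'' perturbations — moving mass $m$ at a level $z_0$ against mass $-m z_0/z_1$ at another level $z_1$, which is $\cT^*$-null — force the $z$-independent part of $\delta_\rho\tilde U(t,\rho,\cdot)$ to be constant in $y$. This gives the desired form for full-support $\rho$, and a density argument (approximating a general $\rho$ by full-support ones and using continuity of $\delta_\rho\tilde U$) extends it to all $\rho\in\cP_2(\R^d\times[0,C])$.

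An equivalent, more hands-on route is the tangent-process representation of $D_\rho\tilde U$ along the lifted McKean--Vlasov SDE \eqref{lifted_MeanField}: the $Z$-equation $\d Z=c(\cdot,Y,\tilde\mu)Z\,\d t$ is linear in $Z$ with coefficient depending only on the $Y$-path, and in the environment $\tilde\mu=\cT^*\rho$ the particle at $(y,z)$ carries mass $z$; hence displacing that particle in an $\R^d$-direction perturbs $\tilde\mu$ by an amount proportional to $z$, whereas displacing it in the $z$-direction perturbs $\tilde\mu$ (and its own $Z$-value) by a $z$-independent unit amount. Propagating this homogeneity through the linear first-variation system and pairing with $D_\rho\tilde G$ — whose first $d$ components are proportional to $z'$ and whose last component is independent of $z'$ by Lemma~\ref{regularity_G_lifting} — yields the same conclusion.

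The main obstacle, common to both routes, is controlling the constraint that admissible perturbations keep $\rho$ in $\cP_2(\R^d\times[0,C])$ — in particular that the $Z$-coordinate stays in $[0,C]$ — which forces one to work with perturbations supported where $Z$ lies in the interior and then recover the boundary values by continuity, and to verify carefully that the $\cT^*$-null family is genuinely rich enough, together with the density passage to a general $\rho$. The joint continuity and essential uniqueness of $\delta_\rho\tilde U$ used throughout rest on the regularity of the flow $\rho_s^{t,\rho}$, for which Assumption~\ref{assum:nondegen} (through the well-posedness of \eqref{lifted_MeanField} and Lemma~\ref{lem:mollify}) and the second-order Lipschitz bounds of Assumption~\ref{assum:differentiability} are needed.
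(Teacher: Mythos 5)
Your second, ``more hands-on'' route is the one the paper actually takes, and in the paper it is carried out in full detail rather than by a homogeneity heuristic: starting from the representation \eqref{intrinsic_derivative_lifted} of $D_\rho\tilde U$ (from \cite[Lemma 6.1]{10.1214/15-AOP1076} combined with Lemma~\ref{regularity_G_lifting} for $D_\rho\tilde G$), one first computes the first variation in $z$ of the tangent flow of \eqref{lifted_MeanField} and obtains $Y_s^{t,(y,z),\rho}=Y_s^{t,y,\rho}$ and $Z_s^{t,(y,z),\rho}=zF_s^{t,y,\rho}$ as in \eqref{lifted_solution}; then one writes the linear McKean--Vlasov systems \eqref{lifted_intrinsic_sde} and \eqref{lifting_intrinsic_auxiliary} satisfied by $D_\rho(Y,Z)$ and the auxiliary process $U^{t,\xi}$, differentiates them in $z$, and uses uniqueness of solutions to these linear systems to get the factorizations \eqref{lifting_intrinsic_auxiliary_simplified} and \eqref{lifting_intrinsic_sde_simplified}; substituting all of this back into \eqref{intrinsic_derivative_lifted} gives (i) and (ii). So the core of the paper's proof is exactly the ``propagating the homogeneity through the first-variation system'' step that you leave as a one-line remark; if you follow this route you must actually set up and solve (by uniqueness) these variational SDEs, since the pairing in \eqref{intrinsic_derivative_lifted} involves not only $\partial_{y_j}(Y,Z)$ but also the measure-derivative processes $D_\rho(Y,Z)$ and $U^{t,\xi}$, whose $z$-structure is not obvious without the computation.

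Your primary route (fibre-invariance of $\tilde U$ under $\cT^*$ plus testing against $\cT^*$-null directions) is genuinely different, but as written it has concrete gaps. First, the two-point ``compensated $y$-transport'' perturbation $\chi=m\delta_{(y_1,z_0)}-\frac{mz_0}{z_1}\delta_{(y_2,z_1)}$ is not admissible: $\cT^*\chi=0$ must hold as a measure on $\R^d$ (i.e.\ $mz_0\delta_{y_1}=mz_0\delta_{y_2}$), which forces $y_1=y_2$, and then mass conservation $\langle 1,\chi\rangle=0$ forces $\chi=0$ unless $z_0=z_1$; to compare the $z$-intercepts at two different $y$'s you need at least a four-point combination (an ``affine-in-$z$ annihilating'' block at each of the two $y$-locations with opposite total masses). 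Second, admissible perturbations must keep $\rho+\epsilon\chi$ a probability measure on $\R^d\times[0,C]$, so $\chi^-$ must be dominated by a multiple of $\rho$; Dirac-type second differences are therefore not directly admissible and must be mollified, and the passage from full-support $\rho$ to general $\rho$, as well as from $\rho$-a.e.\ to everywhere, requires joint continuity of a linear derivative $\delta_\rho\tilde U$ in $(\rho,y,z)$. Third, that continuity (and even the existence of a linear derivative compatible with $D_\rho\tilde U$, obtained as in Proposition~\ref{linear_is_linear} via \cite[Propositions 5.50, 5.51]{carmona2018probabilistic}) is itself extracted from the same tangent-process machinery of \cite{10.1214/15-AOP1076}, so the soft argument does not bypass that analysis, it presupposes it. The fibre-invariance idea (via Lemma~\ref{lem:mollify}) is sound and could be made rigorous along these repaired lines, but the paper's explicit variational computation is both shorter to justify and yields the boundedness information on the derivatives that is needed later in Proposition~\ref{prop_bKol}.
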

The proof of Lemma~\ref{intrinsic_is_linear} is given in Section~\ref{proof_intrinsic_is_linear}, and the second-order version can be obtained analogously.
The above analysis can directly lead to the following proposition on linear derivatives.
\begin{prop}\label{linear_is_linear}
    Suppose Assumptions~\ref{assum:coefficients}, \ref{assum:nondegen} and \ref{assum:differentiability} hold.
    Then for all $t\in[0,T]$, $\rho\in\cP_2(\R^d\times[0,C])$, it holds that:\\
    \emph{(i)} there exists a linear derivative $\delta_\rho \tilde{U}=\delta_\rho \tilde{U}(t,\rho,(y,z))$ in direct proportion to $z$; \\
    \emph{(ii)} there exists a linear derivative $\delta^2_\rho \tilde{U}=\delta^2_\rho \tilde{U}(t,\rho,(y,z),(y^\prime,z^\prime))$ in direct proportion to $zz^\prime$.
\end{prop}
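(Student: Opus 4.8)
The plan is to upgrade the structural information on the intrinsic derivatives of $\tilde U$ (Lemma~\ref{intrinsic_is_linear} and its announced second-order counterpart) to the corresponding statement for linear derivatives, by integrating in the lifted variable. Recall that $D_\rho\tilde U,D_\rho^2\tilde U$ exist by \cite[Lemmas 6.1, 6.2]{10.1214/15-AOP1076} together with the regularity of $\tilde G=G\circ\cT^*$ from Lemma~\ref{regularity_G_lifting}, and that a linear derivative, once it exists, must satisfy $D_{(y,z)}\big[\delta_\rho\tilde U(t,\rho,\cdot)\big]=D_\rho\tilde U(t,\rho,\cdot)$. So the strategy is: (a) reconstruct a candidate $\delta_\rho\tilde U$ by integrating $D_\rho\tilde U$ in $(y,z)$, using Lemma~\ref{intrinsic_is_linear} to pin down its form; (b) check the candidate is a genuine linear derivative in the sense of Definition~\ref{derivatives}, the fundamental-theorem-of-calculus identity being inherited from the differentiability of $\tilde U$ along the flow and the chain rule; (c) repeat for the second order.

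For (i): by Lemma~\ref{intrinsic_is_linear}, write $(D_\rho\tilde U)_i(t,\rho,(y,z))=f_i(t,\rho,y)\,z$ for $1\le i\le d$ and $(D_\rho\tilde U)_{d+1}(t,\rho,(y,z))=g(t,\rho,y)$, the latter independent of $z$. Integrating $\partial_z\delta_\rho\tilde U=g$ in $z$ from $0$ gives $\delta_\rho\tilde U(t,\rho,(y,z))=g(t,\rho,y)\,z+h_0(t,\rho,y)$ with $h_0(t,\rho,y):=\delta_\rho\tilde U(t,\rho,(y,0))$; differentiating in $y_i$ and matching with $(D_\rho\tilde U)_i=f_iz$ (both sides affine in $z$) forces $\partial_{y_i}g=f_i$ and $\partial_{y_i}h_0=0$, so $h_0=h_0(t,\rho)$ is independent of $(y,z)$. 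Since every $\rho,\rho_0\in\cP_2(\R^d\times[0,C])$ has total mass one, $\big\langle h_0(t,\cdot),\rho-\rho_0\big\rangle\equiv 0$; hence subtracting $h_0$ does not affect the defining identity, and $\delta_\rho\tilde U(t,\rho,(y,z))=g(t,\rho,y)\,z$ is a linear derivative in direct proportion to $z$. The quadratic-growth requirement of Definition~\ref{derivatives} is met because $z\in[0,C]$ is bounded and $g$ is controlled by the boundedness of $D_\rho\tilde U$ (cf.~Proposition~\ref{prop_bKol}(ii)).

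For (ii): for each fixed $(y,z)$, $\delta_\rho\tilde U(t,\cdot,(y,z))=z\cdot g(t,\cdot,y)$ is $z$ times a functional of $\rho$ alone, so a linear derivative of it is $z$ times a linear derivative of $\rho\mapsto g(t,\rho,y)$. Applying the same integration argument, now starting from the second-order version of Lemma~\ref{intrinsic_is_linear} (which gives the analogous $zz'$-structure of $D_\rho^2\tilde U$) and discarding the resulting additive functions of $(t,\rho)$, one obtains $\delta_\rho^2\tilde U(t,\rho,(y,z),(y',z'))=\tilde g(t,\rho,y,y')\,zz'$ for a suitable $\tilde g$, again with the required growth.

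I expect the main obstacle to be step (b): verifying that the reconstructed $g(t,\rho,y)\,z$ actually satisfies the integral identity $\tilde U(t,\rho)-\tilde U(t,\rho_0)=\int_0^1\langle\delta_\rho\tilde U(t,\lambda\rho+(1-\lambda)\rho_0,\cdot),\rho-\rho_0\rangle\,\d\lambda$, rather than merely having the right $(y,z)$-gradient for each fixed $\rho$. This is where one must use the differentiability of $\lambda\mapsto\tilde U(t,\lambda\rho+(1-\lambda)\rho_0)$ supplied by the regularity of the flow $\rho\mapsto\rho_T^{t,\rho}$ and of $\tilde G$, together with $D_{(y,z)}\delta_\rho\tilde U=D_\rho\tilde U$; once this is in place, the proportionality in $z$ (and $zz'$) is the purely algebraic consequence carried out above.
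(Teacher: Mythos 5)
Your detailed argument is conditional on exactly the point that the paper's proof must (and does) supply: the \emph{existence} of some linear derivative $\delta_\rho\tilde U$ whose $(y,z)$-gradient is $D_\rho\tilde U$. What is actually known at this stage (from Lemma~\ref{regularity_G_lifting} and the lifted-SDE estimates \`a la \cite[Lemmas 6.1, 6.2]{10.1214/15-AOP1076}) is the existence and regularity of the intrinsic/L-derivative $D_\rho\tilde U$; passing from that to a linear derivative in the sense of Definition~\ref{derivatives} is the nontrivial step. In your write-up this appears twice in a partially circular way: you define $h_0(t,\rho,y):=\delta_\rho\tilde U(t,\rho,(y,0))$ and integrate $\partial_z\delta_\rho\tilde U=g$, both of which presuppose the object whose existence is being claimed; and your step (b), where the fundamental-theorem-of-calculus identity $\tilde U(t,\rho)-\tilde U(t,\rho_0)=\int_0^1\langle\delta_\rho\tilde U(t,\lambda\rho+(1-\lambda)\rho_0,\cdot),\rho-\rho_0\rangle\,\d\lambda$ would be verified, is explicitly left as ``the main obstacle'' with only a sketch. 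That verification is precisely the content of \cite[Propositions 5.50, 5.51]{carmona2018probabilistic}, which the paper invokes: under the available regularity, the radial integral
\begin{equation*}
p^{t,\rho}(y,z):=\int_0^1 D_\rho\tilde U\big(t,\rho,(sy,sz)\big)\cdot(y,z)\,\d s
\end{equation*}
\emph{is} a linear derivative of $\tilde U(t,\cdot)$. Once this is in hand, Lemma~\ref{intrinsic_is_linear} makes every term of the integrand carry a factor $z$ (since $(D_\rho\tilde U)_i(t,\rho,(sy,sz))=f_i(t,\rho,sy)\,sz$ for $i\le d$ and the $(d+1)$th component is independent of $z$), so the $z$-proportionality is immediate, with no need for your reduction; the same computation with the second-order analogue gives (ii).

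Your conditional reduction itself is fine: given a linear derivative with gradient $D_\rho\tilde U$, matching the affine-in-$z$ expressions forces the form $g(t,\rho,y)z+h_0(t,\rho)$, and since $\rho-\rho_0$ has zero total mass on $\cP_2(\R^d\times[0,C])$ the additive term $h_0$ can be discarded. But this buys nothing the paper's route does not already give, and it does not close the existence gap. To make your plan complete you would either have to cite the Carmona--Delarue construction (at which point the paper's one-line computation is the shorter path) or reprove it, i.e.\ establish differentiability of $\lambda\mapsto\tilde U(t,(1-\lambda)\rho_0+\lambda\rho)$ with derivative expressed through $D_\rho\tilde U$ and an independent coupling--this is not a formality and is where the actual work lies.
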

\begin{proof}
    For simplicity, we only prove (i) here, and the proof of (ii) is similar.
    By \cite[Propositions 5.50, 5.51]{carmona2018probabilistic}, the term
    \[
    \begin{aligned}
        p^{t,\rho}(y,z):=\,&\int_0^1 D_\rho\tilde{U}(t,\rho,(sy,sz))\cdot(y,z)\,\d s\\
        =\,&\int_0^1\Big(\sum_{i=1}^d (D_\rho\tilde{U})_{i}(t,\rho,(sy,sz))y_i+(D_\rho\tilde{U})_{d+1}(t,\rho,(sy,sz))z\Big)\,\d s
    \end{aligned}\]
    is a linear derivative $\delta_\rho \tilde{U}$ of $\tilde{U}$.
    Hence, it holds, by Lemma~\ref{intrinsic_is_linear}, that
    \[
    \begin{aligned}
        \delta_\rho \tilde{U}(t,\rho,(y,z))=\,&\int_0^1\Big(\sum_{i=1}^d f_{i}(t,\rho,sy)szy_i+(D_\rho\tilde{U})_{d+1}(t,\rho,sy)z\Big)\,\d s\\
        =\,&z\int_0^1\Big(\sum_{i=1}^d f_{i}(t,\rho,sy)sy_i+(D_\rho\tilde{U})_{d+1}(t,\rho,sy)\Big)\,\d s,
    \end{aligned}\]
    which completes the proof.
    \qed
\end{proof}

Combining Lemma~\ref{regularity_G_lifting} and Proposition~\ref{linear_is_linear}, we obtain, for each fixed $t\in[0,T]$, the following result.
\begin{prop}
    Suppose Assumptions~\ref{assum:coefficients}, \ref{assum:nondegen} and \ref{assum:differentiability} hold.
    For $t\in[0,T]$, $\mu\in\cM_2(\R^d)$ and $\rho=\Phi\mu$, it holds
    \[\frac{\delta\tilde{U}}{\delta\rho}\big(t,\rho,(y,z)\big)=\frac{\delta U}{\delta\mu}(t,\mu,y)z\]
    for all $(y,z)\in\R^d\times[0,C]$, and
    \[\frac{\delta^2\tilde{U}}{\delta\rho^2}\big(t,\rho,(y,z),(y^\prime,z^\prime)\big)=\,\frac{\delta^2 U}{\delta\mu^2}(t,\mu,y,y^\prime)zz^\prime\]
    for all $(y,z),(y^\prime,z^\prime)\in\R^d\times[0,C]$.
\end{prop}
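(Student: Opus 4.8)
The plan is to deduce the statement, for each fixed $t\in[0,T]$, as a direct corollary of Lemma~\ref{regularity_G_lifting}(ii) applied to the functional $F:=\tilde U(t,\cdot)$ on $\cP_2(\R^d\times[0,C])$, with $C\ge 0$ arbitrary. First I would record that, by \eqref{flow_lifting_defn}--\eqref{value_function}, the ``projected'' functional $\hat F:=F\circ\Phi$ is nothing but $U(t,\cdot)$, so that the two formulas produced by Lemma~\ref{regularity_G_lifting}(ii), namely
\[
\delta_\rho F\big(\Phi\mu,(y,z)\big)=\delta_\mu\hat F(\mu,y)\,z,\qquad
\delta^2_\rho F\big(\Phi\mu,(y,z),(y',z')\big)=\delta^2_\mu\hat F(\mu,y,y')\,z z',
\]
are already the asserted identities once $\delta_\mu\hat F$ and $\delta^2_\mu\hat F$ are identified with $\frac{\delta U}{\delta\mu}(t,\cdot)$ and $\frac{\delta^2 U}{\delta\mu^2}(t,\cdot)$.

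Next I would verify the hypotheses of Lemma~\ref{regularity_G_lifting}(ii) for $F=\tilde U(t,\cdot)$. That $F\in\cC_b^2\big(\cP_2(\R^d\times[0,C])\big)$ follows from $\tilde G\in\cC_b^2$ with derivatives bounded on bounded-mass sets (Lemma~\ref{regularity_G_lifting}(i), using that $\cT^*$ is locally Lipschitz by Lemma~\ref{Lipschitz_lifting}), the a priori bound keeping $\rho^{t,\rho}_s$ inside $\cP_2(\R^d\times[0,Ce^{\bar\gamma MT}])$, and the smoothness of the flow map $\rho\mapsto\rho^{t,\rho}_T$ together with the chain-rule formulas of \cite[Lemmas 6.1, 6.2]{10.1214/15-AOP1076}. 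The remaining two hypotheses, namely the existence of a first-order linear derivative $\delta_\rho F(\cdot,(y,z))$ in direct proportion to $z$ (so that automatically $\partial_z^2\delta_\rho F\equiv 0$ and $\delta_\rho F(\rho,(y,0))\equiv 0$) and of a second-order linear derivative $\delta^2_\rho F(\cdot,(y,z),(y',z'))$ in direct proportion to $zz'$, are exactly the content of Proposition~\ref{linear_is_linear} (which in turn rests on Lemma~\ref{intrinsic_is_linear}). Lemma~\ref{regularity_G_lifting}(ii) then applies and yields the two displayed relations with $\delta_\rho F=\delta_\rho\tilde U(t,\cdot)$ and $\delta^2_\rho F=\delta^2_\rho\tilde U(t,\cdot)$.

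It then remains to identify $\delta_\mu\hat F$ and $\delta^2_\mu\hat F$ with the linear derivatives $\frac{\delta U}{\delta\mu}(t,\cdot)$, $\frac{\delta^2 U}{\delta\mu^2}(t,\cdot)$, which I would do via uniqueness of the linear derivative on the cone $\cM_2(\R^d)$: differentiating the defining identity $\hat F(\mu)-\hat F(\nu)=\int_0^1\big\langle\delta_\mu\hat F(\lambda\mu+(1-\lambda)\nu,\cdot),\mu-\nu\big\rangle\,\d\lambda$ in $\lambda$ shows that $\langle\delta_\mu\hat F(\eta,\cdot),\mu-\nu\rangle$ is pinned down by $\hat F$ for all $\mu,\nu\in\cM_2(\R^d)$, and since differences of nonnegative finite measures exhaust all finite signed measures (in particular all Dirac masses) a continuous linear derivative is determined pointwise; the same argument handles the second-order variable, and the degenerate case $\mu=0$ is trivial since both sides of each identity then vanish. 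I expect no essential difficulty here: everything is bookkeeping on top of Lemma~\ref{regularity_G_lifting} and Proposition~\ref{linear_is_linear}, and the only steps requiring a little care are the boundedness of the derivatives of $\tilde U(t,\cdot)$ on bounded-mass balls (needed to invoke Lemma~\ref{regularity_G_lifting}(ii)) and the uniqueness argument that reconciles the two candidate linear derivatives.
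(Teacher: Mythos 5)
Your proposal is correct and follows essentially the same route as the paper: the paper also obtains this proposition by applying Lemma~\ref{regularity_G_lifting}(ii) to $F=\tilde U(t,\cdot)$ (whose projected functional $\hat F=F\circ\Phi$ is $U(t,\cdot)$), with the required proportionality of $\delta_\rho\tilde U$ to $z$ and of $\delta^2_\rho\tilde U$ to $zz'$ supplied by Proposition~\ref{linear_is_linear} (via Lemma~\ref{intrinsic_is_linear} and the regularity of $\tilde U$ from Lemma~\ref{regularity_G_lifting}(i) and \cite[Lemmas 6.1, 6.2]{10.1214/15-AOP1076}). Your extra uniqueness step identifying $\delta_\mu\hat F$ with $\delta_\mu U$ is harmless bookkeeping that the paper skips, since it takes this construction as yielding the linear derivatives of $U$ in the first place.
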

Hence, we have, for each $(t,\mu,y,y^\prime)\in[0,T]\times\cM_2(\R^d)\times\R^d\times\R^d$,
\begin{align*}
    \frac{\delta U}{\delta\mu}(t,\mu,y)=\,&(D_\rho)_{d+1}\tilde{U}\big(t,\rho,(y,z)\big),\\
    (D_\mu)_i U(t,\mu,y)=\,&(D_\rho)_i\tilde{U}\big(t,\rho,(y,1)\big),\,1\leq i\leq d,\\
    \frac{\delta^2 U}{\delta\mu^2}(t,\mu,y,y^\prime)=\,&(D_\rho^2)_{d+1,d+1}\tilde{U}\big(t,\rho,(y,1),(y^\prime,1)\big),\\(D_\mu^2)_{i,j} U(t,\mu,y,y^\prime)=\,&(D_\rho)_{i,j}\tilde{U}\big(t,\rho,(y,1),(y^\prime,1)\big),\,1\leq i,j\leq d,
\end{align*}
where $\rho=\Phi\mu$.
On the other hand, by argument similar to \cite[(7.9), (7.10)]{10.1214/15-AOP1076}, we have
\[
\begin{aligned}
    \partial_t U(t,\mu)=-\Big\langle \Big(\cL \delta_\mu G+\gamma\big(\sum\limits_l lp_l-1\big)\delta_\mu G\Big)(t,\cdot,\tilde{\mu}_T^{t,\mu}),\tilde{\mu}_T^{t,\mu}\Big\rangle.
\end{aligned}
\]
Combining the above with \cite[Lemmas 6.1, 6.2]{10.1214/15-AOP1076}, we can prove, by Assumptions~\ref{assum:coefficients} and \ref{assum:differentiability}, (i) and (ii) of Proposition~\ref{prop_bKol}.

\begin{rem}
    In \cite{10.1214/15-AOP1076}, the mean-field dynamics is time homogeneous, while in our work the coefficients can rely on time.
    The time homogeneous nature, however, was not used in \cite{10.1214/15-AOP1076} except for regularity of the value function with respect to $t$, which can be handled by replacing $x$ with $\tilde{x}=(t,x)$.
    On the other hand, the Lipschitz properties in our work are all uniform in time, and we do not need to compute the derivatives of $b,\sigma,c$ with respect to $t$.
    Hence, regularity of $\tilde{U}$ and consequently $U$ can be obtained analogously to the time homogeneous scenarios.    
\end{rem}

\begin{rem}
    Note that $\tilde{U}(t,\cdot)$ is defined on $\cP_2(\R^d\times [0,C])$ instead of $\cP_2(\R^d)$ or $\cP_2(\R^{d+1})$, which is different from \cite{10.1214/15-AOP1076,carmona2018probabilistic} scenarios.
    The regularity of $\tilde{U}$, however, can be obtained with the same method.
\end{rem}
    
\begin{rem}
	One can in fact prove the existence and boundedness for derivatives of $U$ on $\cM_2(\R^d)$.
	However, for the continuity of the derivatives, we require that $\mu\in\cM_{2+\delta}(\R^d)\setminus\{0\}$ to have some additional tightness property (refer to Lemma \ref{lifting_inverse_continuity}).
\end{rem}

It remains to verify that $U|_{[0,T]\times\cM_{2+\delta}(\R^d)}$ is a solution to \eqref{eq:bKol}.
Note that $U(\cdot,0)\equiv G(0)$ and thus it suffices to prove for $\mu\neq 0$ scenarios,
where $\tilde{\mu}_T^{t,\mu}\neq 0$ for each $t\in [0,T]$.
For all $0\leq t\leq s\leq T,\,\mu\in\cM_{2+\delta}(\R^d)$, we have, by \eqref{flow_lifting}, 
\[U(s,\tilde{\mu}_s^{t,\mu})=G\big(\tilde{\mu}_T^{s,\tilde{\mu}_s^{t,\mu}}\big)=G(\tilde{\mu}_T^{t,\mu})=U(t,\mu).\]
Hence, applying It\^o's formula (see Theorem \ref{ito_formula}) we have
\[
\begin{aligned}
    0=\,&U(s,\tilde{\mu}_s^{t,\mu})-U(t,\mu)\\
    =\,&\int_t^s\bigg(\partial_r U(r,\tilde{\mu}_r^{t,\mu})+\Big\langle \cL \frac{\delta U}{\delta \mu}+\gamma\Big(\sum\limits_l lp_l-1\Big)\frac{\delta U}{\delta \mu} ,\tilde{\mu}_r^{t,\mu}\Big\rangle\bigg)\,\d t,
\end{aligned}
\]
and thus, by continuity,
\[\partial_t U(t,\mu)+\Big\langle \cL \frac{\delta U}{\delta \mu}+\gamma\Big(\sum\limits_l lp_l-1\Big)\frac{\delta U}{\delta \mu} ,\mu\Big\rangle=0.\]
On the other hand, it is clear that $U(T,\cdot)=G(\tilde{\mu}_T^{T,\cdot})=G(\cdot)$.

Combining the above, we complete the proof of Proposition~\ref{prop_bKol}.

\subsection{Proof of Theorem~\ref{thm_WPoC}}\label{proof_main}

By Lemma \ref{lem:FKP} together with the uniqueness result of the nonlinear Fokker--Planck equation in Proposition~\ref{prop:uniqueness},
it follows that, for $\tilde \mu_0 = \mu_0\in\cM_{2+\delta}^\theta(\R^d)$,
\[\mu_T=\tilde{\mu}_T=\tilde{\mu}_T^{0,\mu_0}.\]
Hence, applying Proposition~\ref{prop_bKol} and Theorem~\ref{ito_formula}, one obtains that
\begin{equation}\label{functional_taylor}
    \begin{aligned}
        &\quad G(\mu_T)-\Expect[G(\mu_T^N)]=\,U(0,\mu_0)-\Expect[U(0,\mu_0^N)]-\Expect[U(T,\mu_T^N)-U(0,\mu_0^N)]\\
        =\,& U(0,\mu_0)-\Expect[U(0,\mu_0^N)]
        \\
        &\, - \frac{1}{2N}\int_0^T \Expect\Big[\int_{\R^d}\trace\big(D_{\mu}^2 U(t,\mu_t^N,x,x)\sigma\sigma^\transpose\big)\mu_t^N(\d x)\Big]\d t\\
        &\, +\int_0^T \Expect\Big[\Big\langle\gamma\Big(\sum\limits_l lp_l-1\Big)\frac{\delta U}{\delta \mu} ,\mu_t^N\Big\rangle\Big]\d t\\
        &\, -N\int_0^T \Expect\Big[\int_{\R^d}\gamma\sum\limits_{l}p_l\Big(U\big(t,\mu_t^N+\frac{l-1}{N}\delta_x\big)-U\big(t,\mu_t^N\big)\Big)\mu_t^N(\d x)\Big]\d t,
    \end{aligned}
\end{equation}
where we use 
\[\partial_t U(t,\mu_t^N)+\Big\langle \cL \frac{\delta U}{\delta \mu}+\gamma\Big(\sum\limits_l lp_l-1\Big)\frac{\delta U}{\delta \mu} ,\mu_t^N\Big\rangle=0\]
for all $t\in [0,T)$.
Note that
\[
\begin{aligned}
    &\quad U\big(t,\mu_t^N+\frac{l-1}{N}\delta_x\big)-U\big(t,\mu_t^N\big)=\,\int_0^1 \frac{l-1}{N}\frac{\delta U}{\delta\mu}\big(t,\mu_t^N+\lambda\frac{l-1}{N}\delta_x,x\big)\,\d\lambda\\
    =\,&\frac{l-1}{N}\bigg(\frac{\delta U}{\delta\mu}(t,\mu_t^N,x)+\int_0^1 \Big(\frac{\delta U}{\delta\mu}\big(t,\mu_t^N+\lambda\frac{l-1}{N}\delta_x,x\big)-\frac{\delta U}{\delta\mu}(t,\mu_t^N,x)\Big)\,\d\lambda\bigg)\\
    =\,&\frac{l-1}{N}\frac{\delta U}{\delta\mu}(t,\mu_t^N,x)\\
    &\,+\frac{l-1}{N}\int_0^1\d\lambda\bigg(\lambda\frac{l-1}{N}\int_0^1\frac{\delta^2 U}{\delta\mu^2}\big(t,\mu_t^N+\lambda\lambda^\prime\frac{l-1}{N}\delta_x,x,x\big)\,\d\lambda^\prime\bigg)\\
    =\,&\frac{l-1}{N}\frac{\delta U}{\delta\mu}(t,\mu_t^N,x)+\frac{(l-1)^2}{2N^2}\frac{\delta^2 U}{\delta\mu^2}\big(t,\mu_t^N+\eta\frac{l-1}{N}\delta_x,x,x\big),
\end{aligned}\]
where we apply the mean value theorem for integrals in the fifth line, and $\eta\in[0,1]$.
Hence, we have
\[
\begin{aligned}
   &\quad G(\mu_T)-\Expect[G(\mu_T^N)]\\
        =\,& U(0,\mu_0)-\Expect[U(0,\mu_0^N)]
        \\
        &\, - \frac{1}{2N}\int_0^T \Expect\Big[\int_{\R^d}\trace\big(D_{\mu}^2 U(t,\mu_t^N,x,x)\sigma\sigma^\transpose\big)\mu_t^N(\d x)\Big]\d t\\
        &\, -\frac{1}{2N}\int_0^T \Expect\Big[\int_{\R^d}\gamma\sum_l p_l(l-1)^2\frac{\delta^2 U}{\delta\mu^2}\big(t,\mu_t^N+\eta(l)\frac{l-1}{N}\delta_x,x,x\big)\,\mu_t^N(\d x)\Big]\d t, 
\end{aligned}
\]
and thus
\begin{equation}\label{final_inequality}
    \begin{aligned}
   &\quad \abs{G(\mu_T)-\Expect[G(\mu_T^N)]}\\
        \leq\,& \abs{U(0,\mu_0)-\Expect[U(0,\mu_0^N)]}
        \\
        &\, +\frac{1}{2N}\int_0^T \Expect\Big[\int_{\R^d}\sup\abs{\trace\big(D_{\mu}^2 U\sigma\sigma^\transpose\big)}\mu_t^N(\d x)\Big]\d t\\
        &\, +\frac{1}{2N}\int_0^T \Expect\Big[\int_{\R^d}\gamma\sum_l p_l(l-1)^2\sup\abs{\frac{\delta^2 U}{\delta\mu^2}}\,\mu_t^N(\d x)\Big]\d t.
\end{aligned}
\end{equation}
	By the boundedness result of the derivatives in Proposition \ref{prop_bKol}, together with Lemma \ref{lem_initial} below, 
	we then conclude the proof of Theorem \ref{thm_WPoC}.
\qed

\begin{lem}\label{lem_initial}
    Under Assumptions~\ref{assum:coefficients}, \ref{assum:initial}, \ref{assum:nondegen} and \ref{assum:differentiability}, we have
    \[\abs{U(0,\mu_0)-\Expect[U(0,\mu_0^N)]}\leq C\frac{1}{N}.\]
\end{lem}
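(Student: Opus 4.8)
The plan is to run the classical second-order expansion of the value function around the independent empirical sample, combined with a localisation that controls the (random, unbounded) total mass. Put $\xi_i:=\Psi Z_0^i$, the projection of the $i$-th initial configuration through the map $\Psi$ of \eqref{E_projection}; by Assumption~\ref{assum:initial} (and, as needed, the additional hypotheses of Theorem~\ref{thm_WPoC}) the $\xi_i\in\cM_{2+\delta}(\R^d)$ are i.i.d.\ with $\Expect[\xi_i]=\mu_0$ and $\Expect[\xi_i(\R^d)^2]=\Expect[(\#K_0)^2]<\infty$, and $\mu_0^N=\tfrac1N\sum_{i=1}^N\xi_i$. We may assume $\mu_0\neq0$ (else $U(0,\cdot)\equiv G(0)$ and the claim is trivial). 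Fix the deterministic constant $C:=1+\mu_0(\R^d)$ and set $A_N:=\{0<\mu_0^N(\R^d)\le C\}$. Since $\Expect[(\#K_0)^2]<\infty$, Chebyshev's inequality gives $\Proba(A_N^c)=O(1/N)$, and on $A_N^c$ we use the crude bound $\abs{U(0,\mu_0^N)-U(0,\mu_0)}\le2\norm{G}_\infty$, valid because $U(0,\cdot)=G(\tilde\mu_T^{0,\cdot})$ with $G\in\cC_b^2$; this contributes $O(1/N)$ to $\Expect[U(0,\mu_0^N)]-U(0,\mu_0)$.

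On $A_N$ the segment joining $\mu_0$ and $\mu_0^N$ stays in $\cM_{2+\delta}(\R^d)\setminus\{0\}$ with total mass $\le C$, i.e.\ inside $\mathcal B_2(C)$, where by Proposition~\ref{prop_bKol} (and the Lipschitz estimates behind it, which use Assumption~\ref{assum:differentiability} along the lines of \cite{10.1214/15-AOP1076}) $U(0,\cdot)$ is $\cC^2$ with $\delta_\mu U(0,\cdot,\cdot)$ and $\delta_\mu^2U(0,\cdot,\cdot,\cdot)$ bounded and Lipschitz in the measure argument for $\mathbf d$. Hence the second-order Taylor formula associated with the linear derivatives of Definition~\ref{derivatives} holds there: writing $\eta_N:=\mu_0^N-\mu_0$ and $\mu_0^u:=\mu_0+u\eta_N$,
\[
U(0,\mu_0^N)-U(0,\mu_0)=\big\langle\delta_\mu U(0,\mu_0,\cdot),\eta_N\big\rangle+\int_0^1(1-u)\int_{\R^d}\!\int_{\R^d}\delta_\mu^2U\big(0,\mu_0^u,x,x'\big)\,\eta_N(\d x)\,\eta_N(\d x')\,\d u.
\]
The first-order term has expectation zero before localisation, $\Expect[\langle\delta_\mu U(0,\mu_0,\cdot),\eta_N\rangle]=\langle\delta_\mu U(0,\mu_0,\cdot),\Expect[\mu_0^N]-\mu_0\rangle=0$, because $\delta_\mu U(0,\mu_0,\cdot)$ is bounded and $\Expect[\mu_0^N]=\mu_0$; the correction coming from restricting to $A_N$ is $\mp\Expect[\mathbf 1_{A_N^c}\langle\delta_\mu U(0,\mu_0,\cdot),\eta_N\rangle]$, which is $O(1/N)$ since $\abs{\langle\delta_\mu U(0,\mu_0,\cdot),\eta_N\rangle}\le\norm{\delta_\mu U(0,\mu_0,\cdot)}_\infty(\mu_0^N(\R^d)+\mu_0(\R^d))$ and, with $\bar Y:=\mu_0^N(\R^d)-\mu_0(\R^d)$, one has $\mathbf 1_{A_N^c}\abs{\bar Y}\le\bar Y^2$ while $\Expect[\bar Y^2]=\var(\#K_0)/N$.

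The core is the remainder. Writing $\eta_N\otimes\eta_N=\tfrac1{N^2}\sum_{i,j}(\xi_i-\mu_0)\otimes(\xi_j-\mu_0)$, the $N$ diagonal terms $i=j$ contribute $O(1/N)$ at once, using boundedness of $\delta_\mu^2U$ on $\mathcal B_2(C)$ and $\Expect[\norm{\xi_i-\mu_0}_{\mathrm{TV}}^2]\le\Expect[(\#K_0+\mu_0(\R^d))^2]<\infty$. For each of the $N(N-1)$ off-diagonal terms one freezes $\xi_i,\xi_j$: inside $\delta_\mu^2U$ one replaces $\mu_0^N$ by $\mu_0^{N,ij}:=\tfrac1N\sum_{k\neq i,j}\xi_k+\tfrac2N\mu_0$, which is measurable with respect to $\mathcal G_{ij}$, the $\sigma$-field generated by the samples other than $i,j$, and hence independent of $(\xi_i,\xi_j)$. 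The replacement error is controlled by the local Lipschitz continuity of $\delta_\mu^2U(0,\cdot,x,x')$ together with $\mathbf d(\mu_0^N,\mu_0^{N,ij})\le\tfrac1N(\norm{\xi_i-\mu_0}_{\mathrm{TV}}+\norm{\xi_j-\mu_0}_{\mathrm{TV}})$, so that summed over $i\neq j$ and divided by $N^2$ it is $O(1/N)$ (still integrable with only the second moment of $\#K_0$, because the factor $\norm{\xi_i-\mu_0}_{\mathrm{TV}}$ attaches to $\xi_i$ and $\norm{\xi_j-\mu_0}_{\mathrm{TV}}$ to $\xi_j$). After this replacement — and after also subtracting/adding the deterministic $\delta_\mu^2U(0,\mu_0,\cdot,\cdot)$ to keep the bounds uniform — conditioning on $\mathcal G_{ij}$ and using $\Expect[\xi_i-\mu_0\mid\mathcal G_{ij}]=\Expect[\xi_j-\mu_0\mid\mathcal G_{ij}]=0$ makes the off-diagonal contribution vanish. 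Collecting the three pieces yields $\abs{\Expect[U(0,\mu_0^N)]-U(0,\mu_0)}\le C/N$.

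The step I expect to be the main obstacle is reconciling the localisation with the exchangeability argument: the derivative bounds of $U$ degrade with the total mass, so the cut at $A_N$ is unavoidable, but $\mathbf 1_{A_N}$ depends on \emph{all} the $\xi_k$ and so blocks the $\mathcal G_{ij}$-conditioning used in the off-diagonal step. This is handled by splitting $\mathbf 1_{A_N}=1-\mathbf 1_{A_N^c}$: the cancellation is applied with the $\mathcal G_{ij}$-measurable localising event $\{0<\mu_0^{N,ij}(\R^d)\le C\}$ in place of $A_N$, and for the residual tail events of the form $\{\mu_0^N(\R^d)>C\}$ one decomposes the excess of $\tfrac1N\sum_k\#K_0^k$ into a ``bulk'' part $\tfrac1N\sum_{k\neq i,j}\#K_0^k$ (independent of $\xi_i,\xi_j$, and abnormally large with probability $O(1/N)$ by Chebyshev) and the individual parts $\#K_0^i,\#K_0^j$ (for which $\Expect[\mathbf 1_{\{\#K_0^i>N/6\}}\#K_0^i]=O(1/N)$ by $\Expect[(\#K_0)^2]<\infty$). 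Carrying every one of these error terms at the precise order $1/N$ using \emph{only} the second moment of $\#K_0$ — rather than a higher moment — is the delicate bookkeeping, and it relies repeatedly on the elementary inequality $\mathbf 1_{\{\bar Y>1\}}\abs{\bar Y}\le\bar Y^2$.
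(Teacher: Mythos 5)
Your route (second-order Taylor expansion around $\mu_0$ plus conditioning on $\mathcal G_{ij}$ for the off-diagonal terms, with a mass localisation) is genuinely different from the paper's, which follows \cite[Theorem 2.14]{chassagneux2022weak}: a single first-order expansion $\Expect[U_0(\mu_0^N)-U_0(\mu_0)]=\Expect[\int_0^1\langle\delta_\mu U_0(\mu_0^{N,\lambda},\cdot),\mu_0^N-\mu_0\rangle\,\d\lambda]$, then exchangeability plus an independent copy $\bar Z_0$ to swap the particles of $Z_0^1$ for those of $\bar Z_0$ inside both the test point and the measure argument, and finally one more linear-derivative expansion of the resulting difference, which produces the factor $1/N$ directly. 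That argument needs only the \emph{boundedness} of $\delta_\mu^2 U_0$ from Proposition~\ref{prop_bKol}(ii) and $\Expect[(\#K_0)^2]<\infty$; it requires no localisation, no conditioning decomposition, and in particular no quantitative continuity of the second derivative.

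The genuine gap in your proposal is precisely there: the off-diagonal cancellation hinges on replacing $\mu_0^u$ by the $\mathcal G_{ij}$-measurable $\mu_0^{N,ij}$ and controlling the replacement error through \emph{Lipschitz continuity of $\delta_\mu^2 U(0,\cdot,x,x')$ in the measure argument with respect to $\mathbf d$}, uniformly in $(x,x')$ on mass-bounded sets. Proposition~\ref{prop_bKol} only gives boundedness and continuity of $\delta_\mu^2 U$; no Lipschitz estimate for the second-order derivative is stated or proved anywhere in the paper, and obtaining it would require extending the second-order estimates imported from \cite{10.1214/15-AOP1076} beyond what Assumption~\ref{assum:differentiability} is shown to deliver. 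Without such a quantitative modulus, each of the $N(N-1)$ off-diagonal replacement errors is only $o(1/N^2)$ rather than $O(1/N^3)$, and the total is not $O(1/N)$. In addition, the interleaving of the non-$\mathcal G_{ij}$-measurable event $A_N$ with the conditioning step is only sketched (and the elementary bound $\mathbf 1_{A_N^c}\abs{\bar Y}\le\bar Y^2$ fails on the piece $\{\mu_0^N(\R^d)=0\}$ when $\mu_0(\R^d)<1$, though that piece is exponentially small); this bookkeeping, while plausibly completable with only second moments, is exactly the machinery the paper's swap argument makes unnecessary. If you want to keep your structure, you would need to first prove a Lipschitz (or Hölder with summable exponent) estimate for $\delta_\mu^2 U$ in $\mathbf d$ on $\mathcal B_2(C)$; otherwise, the exchangeability-swap route is both shorter and matches the regularity actually established.
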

\begin{proof}
	We follow the technique in \cite[Theorem 2.14]{chassagneux2022weak}.
    For simplicity, we write $U_0(\cdot):=U(0,\cdot),\,\mu_0^{N,\lambda}:=\lambda\mu_0^N+(1-\lambda)\mu_0$ for $\lambda\in[0,1]$.
    Note that the regularity of $U_0$ follows from Proposition~\ref{prop_bKol},
    and we have, by Assumption~\ref{assum:initial} and definition of linear derivatives,
    \[
    \begin{aligned}
        &\quad\Expect[U_0(\mu_0^N)-U_0(\mu_0)]=\,\Expect\Big[\int_0^1\langle\delta_\mu U_0(\mu_0^{N,\lambda},\cdot),\mu_0^N-\mu_0\rangle\,\d\lambda\Big]\\
        =\,&\int_0^1\frac{1}{N}\sum_{i=1}^N\Expect\Big[\sum_{k\in K_0^i}\delta_\mu U_0(\mu_0^{N,\lambda},X_0^{i,k})-\sum_{k\in K_0}\delta_\mu U_0(\mu_0^{N,\lambda},X_0^k)\Big]\,\d\lambda\\
        =\,&\int_0^1\Expect\Big[\sum_{k\in K_0^1}\delta_\mu U_0(\mu_0^{N,\lambda},X_0^{1,k})-\sum_{k\in K_0}\delta_\mu U_0(\mu_0^{N,\lambda},X_0^k)\Big]\,\d\lambda,
    \end{aligned}\]
    where we use the i.i.d. property of $\{Z_0^i\}_{i=1}^N$ in the last step.
    Without loss of generality, we assume $\bar{Z}_0$ is independent of $\{Z_0^i\}_{i=1}^N$,
    and write 
    \[
    \begin{aligned}
        \bar{\mu}_0^{N,\lambda}:=\,&\mu_0^{N,\lambda}+\frac{\lambda}{N}\Big(\sum_{k\in K_0}\delta_{X_0^k}-\sum_{k\in K_0^1}\delta_{X_0^{1,k}}\Big)\\
        \bar{\mu}_0^{N,\lambda,\lambda^\prime}:=\,&\lambda^\prime \bar{\mu}_0^{N,\lambda}+(1-\lambda^\prime)\mu_0^{N,\lambda}
    \end{aligned}\]
    for $\lambda,\lambda^\prime\in[0,1]$.
    Utilizing independence and identical distribution, we have
    \[\Expect\Big[\sum_{k\in K_0^1}\delta_\mu U_0(\mu_0^{N,\lambda},X_0^{1,k})\Big]=\Expect\Big[\sum_{k\in K_0}\delta_\mu U_0(\bar{\mu}_0^{N,\lambda},X_0^k)\Big],\]
    and thus
    \[
    \begin{aligned}
        &\quad\Expect[U_0(\mu_0^N)-U_0(\mu_0)]=\,\int_0^1\Expect\Big[\sum_{k\in K_0}\big(\delta_\mu U_0(\bar{\mu}_0^{N,\lambda},X_0^k)-\delta_\mu U_0(\mu_0^{N,\lambda},X_0^k)\big)\Big]\,\d\lambda\\
        =\,&\int_0^1\d\lambda\frac{\lambda}{N}\Expect\Big[\sum_{k\in K_0}\int_0^1\Big(\sum_{k\in K_0}\delta_\mu^2 U_0(\bar{\mu}_0^{N,\lambda,\lambda^\prime},X_0^k,X_0^k)\\
        &\,-\sum_{k\in K_0^1}\delta_\mu^2 U_0(\bar{\mu}_0^{N,\lambda,\lambda^\prime},X_0^k,X_0^{1,k})\Big)\,\d\lambda^\prime\Big].
    \end{aligned}\]
    By Proposition~\ref{prop_bKol}, we have
    \[
    \begin{aligned}
        \abs{\Expect[U_0(\mu_0^N)-U_0(\mu_0)]}\leq\,&\int_0^1\d\lambda\frac{\lambda}{N}\Expect\Big[\sum_{k\in K_0}(\# K_0+\#K_0^1)\sup\abs{\delta_\mu^2 U}\Big]\\
        \leq\,&\frac{C}{2N}\Big(\Expect\big[(\# K_0)^2\big]+\Expect\big[(\# K_0)^2\big]^{1/2}\Expect\big[(\# K_0^1)^2\big]^{1/2}\Big)\\
        \leq\,& C\frac{1}{N},
    \end{aligned}\]
    which completes the proof.
    \qed
\end{proof}

\begin{appendix}

\section{Proof of Theorem~\ref{ito_formula}}\label{append_ito}
In this section, we prove functional It\^o's formula \eqref{ito_environment} for the environment measure $\mu_t$ and \eqref{ito_empirical} for the empirical measure $\mu_t^N$, as detailed in the following two subsections, respectively.
\subsection{It\^o's formula for $\mu_t$}
Firstly, we prove the continuity of $\mu_t$ with respect to $t$.
\begin{lem}\label{time_continuity}
    For all $0\leq t\leq s\le T$, it holds 
    \[\mathbf{d}(\mu_s,\mu_t)\leq C\abs{s-t}^{1/2}\]
    for some constant $C=C(d,T,M,\bar{\gamma},\Expect[\# K_0])$.
\end{lem}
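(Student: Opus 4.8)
The plan is to bound $\abs{\langle f,\mu_s-\mu_t\rangle}$ uniformly over all test functions $f$ with $\norm{f}_\infty\le 1$ and $\norm{f}_{\text{Lip}}\le 1$, and then take the supremum to recover $\mathbf{d}(\mu_s,\mu_t)$ by Definition~\ref{wasserstein}. The obstruction is that such an $f$ is only Lipschitz, whereas the Fokker--Planck identity \eqref{F-P_duality} of Lemma~\ref{lem:FKP} (equivalently, taking expectations in the SDE \eqref{SDE_E_MeanField}) requires $\cC^2$ test functions; I would therefore first mollify. Let $\phi_\varepsilon$ be a smooth mollifier supported in the ball of radius $\varepsilon$ and put $f^\varepsilon:=f*\phi_\varepsilon$. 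Then $f^\varepsilon\in\cC^2_b(\R^d)$ with $\norm{f^\varepsilon}_\infty\le 1$, $\norm{Df^\varepsilon}_\infty\le\norm{f}_{\text{Lip}}\le 1$, $\norm{D^2f^\varepsilon}_\infty=\norm{(Df)*D\phi_\varepsilon}_\infty\le\norm{f}_{\text{Lip}}\norm{D\phi_\varepsilon}_{L^1}\le C\varepsilon^{-1}$, and $\norm{f-f^\varepsilon}_\infty\le\norm{f}_{\text{Lip}}\,\varepsilon\le\varepsilon$.

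Next, for the $\cC^2_b$ function $f^\varepsilon$ (viewed as time-independent, hence in $\cC^{1,2}_b([t,T]\times\R^d)$) I would apply \eqref{F-P_duality}, which gives
\[
\langle f^\varepsilon,\mu_s\rangle-\langle f^\varepsilon,\mu_t\rangle=\int_t^s\Big\langle \cL f^\varepsilon(r,\cdot,\mu_r)+\gamma\Big(\sum_l lp_l-1\Big)f^\varepsilon,\mu_r\Big\rangle\,\d r .
\]
Using Assumption~\ref{assum:coefficients} ($\norm{b}_\infty,\norm{\sigma}_\infty\le M$, $\gamma\le\bar\gamma$, $\norm{\sum_l lp_l}_\infty\le M$), the bounds on $f^\varepsilon$ above, and $\mu_r(\R^d)=\Expect[\#K_r]\le\Expect[\#K_0]e^{\bar\gamma MT}$ from \eqref{mass_growth}, the integrand is bounded in absolute value by $C(1+\varepsilon^{-1})\Expect[\#K_0]e^{\bar\gamma MT}$, so that for $\varepsilon\le 1$,
\[
\abs{\langle f^\varepsilon,\mu_s-\mu_t\rangle}\le \frac{C}{\varepsilon}\,\abs{s-t},
\]
with $C=C(d,T,M,\bar\gamma,\Expect[\#K_0])$. (If one prefers to avoid citing Lemma~\ref{lem:FKP}, the same identity follows by taking expectations in \eqref{SDE_E_MeanField} with $\phi^k\equiv f^\varepsilon$: the stochastic-integral term has zero mean and the $Q^k$-integral equals its compensator in mean, both steps justified by $\Expect\int_0^T\#K_r\,\d r<\infty$.)

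Finally I would combine $\abs{\langle f,\mu_s-\mu_t\rangle}\le\abs{\langle f-f^\varepsilon,\mu_s\rangle}+\abs{\langle f^\varepsilon,\mu_s-\mu_t\rangle}+\abs{\langle f^\varepsilon-f,\mu_t\rangle}$ with $\abs{\langle f-f^\varepsilon,\mu_r\rangle}\le\varepsilon\,\mu_r(\R^d)\le C\varepsilon$, which yields $\abs{\langle f,\mu_s-\mu_t\rangle}\le C\big(\varepsilon+\varepsilon^{-1}\abs{s-t}\big)$; choosing $\varepsilon=\abs{s-t}^{1/2}$ when $\abs{s-t}\le 1$ gives $\abs{\langle f,\mu_s-\mu_t\rangle}\le 2C\abs{s-t}^{1/2}$, while for $\abs{s-t}\in(1,T]$ the trivial estimate $\mathbf{d}(\mu_s,\mu_t)\le\mu_s(\R^d)+\mu_t(\R^d)\le C\le C\abs{s-t}^{1/2}$ closes the remaining case; taking the supremum over admissible $f$ completes the proof. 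The only genuine obstacle is the low-regularity/mollification step, and it is also the reason one cannot improve the exponent past $1/2$: the second-order term in $\cL$ forces the $\varepsilon^{-1}$ blow-up of $D^2f^\varepsilon$, which must be balanced against $\abs{s-t}$, whereas the branching contribution $\gamma(\sum_l lp_l-1)f^\varepsilon$ involves no derivative and is of the lower order $O(\abs{s-t})$.
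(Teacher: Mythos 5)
Your proposal is correct, but it proves the lemma by a genuinely different route than the paper. The paper argues pathwise on the particle system: it splits $\langle f,\mu_s-\mu_t\rangle$ according to labels, bounds the contribution of particles in $K_s\cap K_t$ by $\sup_k\Expect[|X_s^k-X_t^k|]\cdot\Expect[\#(K_s\cap K_t)]\leq C|s-t|^{1/2}$ using the SDE \eqref{branching_diffusion} and \eqref{mass_growth}, and bounds the birth/death contribution $K_s\triangle K_t$ by $C|s-t|$ via the compensator of the Poisson random measures; the exponent $1/2$ comes directly from the Brownian increment. You instead work entirely at the level of the weak (Fokker--Planck) formulation: mollify the Lipschitz test function, use \eqref{F-P_duality} (equivalently \eqref{environment_integral}, which the paper derives directly from \eqref{SDE_E_MeanField} and is independent of this lemma, so there is no circularity), pay $\varepsilon^{-1}$ through $\|D^2f^\varepsilon\|_\infty$ and $\varepsilon$ through $\|f-f^\varepsilon\|_\infty$, and optimize $\varepsilon=|s-t|^{1/2}$, with the trivial mass bound handling $|s-t|>1$. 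Your steps check out (the mollifier estimates, the integrand bound $C(1+\varepsilon^{-1})$ against the mass bound $\Expect[\#K_r]\leq\Expect[\#K_0]e^{\bar\gamma MT}$, and the justification of the martingale/compensator identities via $\Expect\int_0^T\#K_r\,\d r<\infty$ are all standard and valid), and the constant has the stated dependence. What each approach buys: the paper's argument is more elementary and probabilistically transparent, requiring only $L^1$/$L^2$ moment bounds on individual particles; yours is more robust in that it only uses the weak formulation plus a mass bound, so it would apply verbatim to any measure flow solving \eqref{F-P_duality} (e.g.\ $(\tilde\mu_t)$ from \eqref{dynamics_dual}) without reference to the underlying branching particle representation, at the modest cost of the mollification step. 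Both yield the same exponent $1/2$, attributable in both cases to the second-order (diffusion) part of $\cL$.
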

\begin{proof}
    For each bounded Lipschitz function $f:\R^d\to\R$ such that $\norm{f}_{\infty},\norm{f}_{\text{Lip}}\leq 1$, we have
    \begin{align*}
        \abs{\langle f,\mu_s-\mu_t\rangle}\leq\,& \Expect\Big[\sum_{k\in K_s\cap K_t}\abs{f(x_s^k)-f(X_t^k)}\Big]\\
        &\,+\Big\vert\Expect\Big[\sum_{k\in K_s\setminus K_t}f(x_s^k)-\sum_{k\in K_t\setminus K_s}f(x_t^k)\Big]\Big\vert.
    \end{align*}
    Note that
    \begin{align*}
        \Expect\Big[\sum_{k\in K_s\cap K_t}\abs{f(x_s^k)-f(X_t^k)}\Big]\leq\,& \Expect\Big[\sum_{k\in \mathbb{K}}\abs{X_s^k-X_t^k}\mathbbm{1}_{K_s\cap K_t}(k)\Big]\\
        =\,&\sum_{k\in \mathbb{K}}\Expect\Big[\abs{X_s^k-X_t^k}\Big\vert k\in K_s\cap K_t\Big]\Expect[1_{\{k\in K_s\cap K_t\}}]\\
        \leq\,& \sup_{k\in\mathbb{K}} \Expect \Big[\abs{X_s^k-X_t^k}\Big]\Expect\Big[\sum_{k\in\mathbb{K}}1_{\{k\in K_s\cap K_t\}}\Big].
    \end{align*}
    By Assumption~\ref{assum:coefficients}, it holds that
    \[
    \begin{aligned}
        \Expect\Big[\abs{X_s^k-X_t^k}\Big]\leq\,&\Expect\Big[\abs{X_s^k-X_t^k}^2\Big]^{1/2}\\
        \leq\,&\Expect\bigg[\abs{\int_t^s b(r,X_r^k,\mu_r)\,\d r+\int_t^s \sigma(r,X_r^k,\mu_r)\,\d W_r^k}^2\bigg]^{1/2}\\
        \leq\,& M\abs{s-t}+Md\abs{s-t}^{1/2}\leq C\abs{s-t}^{1/2}.
    \end{aligned}\]
    Applying Assumption~\ref{assum:initial}, we have (recall \eqref{mass_growth})
    \[\Expect\Big[\sum_{k\in\mathbb{K}}1_{\{k\in K_s\cap K_t\}}\Big]=\Expect[\#(K_s\cap K_t)]\leq \Expect[\# K_0]\exp(\bar{\gamma} MT)\leq C.\]
    On the other hand, note that
    \[
    \begin{aligned}
        &\quad\Expect\Big[\sum_{k\in K_s\setminus K_t}f(x_s^k)-\sum_{k\in K_t\setminus K_s}f(x_t^k)\Big]\\
        =\,&\Expect\Big[\int_t^s\int_{[0,\bar{\gamma}]\times[0,1]}\sum\limits_{k\in K_{r-}}\sum\limits_{l\geq0}(l-1)f(X_r^k)\mathbbm{1}_{([0,\gamma]\times I_l)(r,X_r^k,\mu_{r-})}\,Q^k(\d r,\d z)\Big]\\
        =\,&\int_t^s\Big\langle \gamma\Big(\sum\limits_l lp_l-1\Big)f(r,\cdot,\mu_r),\mu_r\Big\rangle\,\d r.
    \end{aligned}\]
    Hence, we have
    \[\Big\vert\Expect\Big[\sum_{k\in K_s\setminus K_t}f(X_s^k)-\sum_{k\in K_t\setminus K_s}f(X_t^k)\Big]\Big\vert\leq \int_t^s\Big\langle \bar{\gamma}M,\mu_r\Big\rangle\,\d r\leq C\abs{s-t}\leq C\abs{s-t}^{1/2}.\]

    Combining the above with Definition~\ref{wasserstein}, we complete proof of the lemma.
    \qed
\end{proof}

On the other hand, for each $f\in \cC_b^2(\R^d)$ it holds that
\begin{equation}\label{environment_integral}
    \begin{aligned}
        \langle f,\mu_t\rangle=&\,\langle f,\mu_0\rangle+\int_0^t\big\langle \cL f(s,\cdot,\mu_s),\mu_s\big\rangle\,\d s\\
        &\,+\Expect\Big[\int_0^t\int_{[0,\bar{\gamma}]\times[0,1]}\sum\limits_{k\in K_{s-}}\sum\limits_{l\geq0}(l-1)f(X_s^k)\mathbbm{1}_{([0,\gamma]\times I_l)(s,X_s^k,\mu_{s-})}\,Q^k(\d s,\d z)\Big]\\
        =&\,\langle f,\mu_0\rangle+\int_0^t\Big\langle \Big(\cL f+\gamma\Big(\sum\limits_l lp_l-1\Big)f\Big)(s,\cdot,\mu_s),\mu_s\Big\rangle\,\d s,
    \end{aligned}
\end{equation}
where we use $K_{t-}=K_t,\,\mu_{t-}=\mu_t$ for almost all $t\in[0,T]$.

\paragraph{Proof of the equation \eqref{ito_environment}.}

\emph{Step 1}: Let us begin with $F\in\cC^2(\cM_2(\R^d))$.
For all $t\in[0,T],\,n\geq1$, let $0=t_0<t_1<...<t_n=t$ be a partition of $[0,t]$ such that $t_i-t_{i-1}=\frac{t}{n}$ for $1\leq i\leq n$, and we have
\[F(\mu_t)-F(\mu_0)=\sum\limits_{i=1}^n\big(F(\mu_{t_i})-F(\mu_{t_{i-1}})\big).\]
For $1\leq i\leq n$, it holds
\[
\begin{aligned}
    &\quad F(\mu_{t_i})-F(\mu_{t_{i-1}})\\
    =&\,\int_0^1\big\langle\frac{\delta F}{\delta\mu}\big(\lambda\mu_{t_i}+(1-\lambda)\mu_{t_{i-1}},\cdot\big),\,\mu_{t_i}-\mu_{t_{i-1}}\big\rangle\,\d\lambda\\
    =&\,\int_{t_{i-1}}^{t_i}\d r
    \int_0^1\bigg\langle\Big(\cL+\gamma\big(\sum\limits_l lp_l-1\big)\Big)\frac{\delta F}{\delta\mu}\big(\lambda\mu_{t_i}+(1-\lambda)\mu_{t_{i-1}}\big)\big(r,\cdot,\mu_r\big),\,\mu_r\bigg\rangle\,\d\lambda,
\end{aligned}\]
where we use \eqref{environment_integral} in the last line.
For each $r\in[t_{i-1},t_i)$, denote by $t^n(r):=t_{i-1}$ and by $\bar{t}^n(r):=t_i$, and the above can be rewritten as
\[
    \begin{aligned}
        &\quad F(\mu_t)-F(\mu_0)\\
        =&\,\int_0^t\int_0^1\bigg\langle\Big(\cL+\gamma\big(\sum\limits_l lp_l-1\big)\Big)\frac{\delta F}{\delta\mu}\big(\lambda\mu_{\bar{t}^n(r)}+(1-\lambda)\mu_{t^n(r)}\big)\big(r,\cdot,\mu_r\big),\,\mu_r\bigg\rangle\,\d\lambda\d r.
    \end{aligned}
\]
We write 
\[g_n(r,\lambda):=\bigg\langle\Big(\cL+\gamma\big(\sum\limits_l lp_l-1\big)\Big)\frac{\delta F}{\delta\mu}\big(\lambda\mu_{\bar{t}^n(r)}+(1-\lambda)\mu_{t^n(r)}\big)\big(r,\cdot,\mu_r\big),\,\mu_r\bigg\rangle\]
for each $n\geq1,\,(r,\lambda)\in[0,t)\times[0,1]$.
By Definition~\ref{C^2} and Assumption~\ref{assum:coefficients}, $\{g_n\}_{n\geq 1}$ is uniformly bounded.
Note that 
\[\lim_{n\to\infty}t^n(r)=\lim_{n\to\infty}\bar{t}^n(r)=r.\]
Hence, applying Definition~\ref{C^2} and Lemma~\ref{time_continuity} to $\{g_n\}_{n\geq 1}$, we have 
\[\lim_{n\to\infty}g_n(r,\lambda)=\Big\langle \cL \frac{\delta F}{\delta \mu}+\gamma\Big(\sum\limits_l lp_l-1\Big)\frac{\delta F}{\delta \mu} ,\mu_r\Big\rangle\]
for all $(r,\lambda)\in[0,t)\times[0,1]$.
By Lebesgue's dominated convergence theorem, we therefore obtain
\begin{equation}\label{ito_time_homogeneous}
    F(\mu_t)-F(\mu_0)=\int_0^t\Big\langle \cL \frac{\delta F}{\delta \mu}+\gamma\Big(\sum\limits_l lp_l-1\Big)\frac{\delta F}{\delta \mu} ,\mu_r\Big\rangle\,\d r
\end{equation}
for each $t\in[0,T]$.

\emph{Step 2}: For $F\in\cC_b^{1,2}\big([0,T]\times\cM_2(\R^d)\big)$, we denote by $F_t(\cdot):=F(t,\cdot)$ for all $t\in[0,T]$.
For $t\in[0,T]$, let us consider the same partition of $[0,t]$ as in \emph{Step 1},
and we have, for each $1\leq i\leq n$,
\[
\begin{aligned}
    &\quad F(t_i,\mu_{t_i})-F(t_{i-1},\mu_{t_{i-1}})\\
    =&\,F(t_i,\mu_{t_i})-F(t_{i-1},\mu_{t_i})+F_{t_{i-1}}(\mu_{t_i})-F_{t_{i-1}}(\mu_{t_{i-1}})\\
    =&\int_{t_{i-1}}^{t_i}\partial_r F(r,\mu_{t_i})\,\d r\\
    &\,+\int_{t_{i-1}}^{t_i}\Big\langle \cL \frac{\delta F_{t_{i-1}}}{\delta \mu}+\gamma\Big(\sum\limits_l lp_l-1\Big)\frac{\delta F_{t_{i-1}}}{\delta \mu} ,\mu_r\Big\rangle\,\d r,
\end{aligned}\]
where we apply \eqref{ito_time_homogeneous} to $F_{t_{i-1}}$ in the last line.
Hence, it holds
\[
\begin{aligned}
    &\quad F(t,\mu_t)-F(0,\mu_0)\\
    =&\int_0^t\partial_r F(r,\mu_{\bar{t}^n(r)})\,\d r\\
    &\,+\int_0^t\Big\langle \cL \frac{\delta F_{t^n(r)}}{\delta \mu}+\gamma\Big(\sum\limits_l lp_l-1\Big)\frac{\delta F_{t^n(r)}}{\delta \mu} ,\mu_r\Big\rangle\,\d r.
\end{aligned}\]
Analogously to \emph{Step 1}, we let $n\to\infty$, applying Definition~\ref{C^1,2} and Assumption~\ref{assum:coefficients}, and the proof of \eqref{ito_environment} is complete.

\subsection{It\^o's formula for $\mu_t^N$}
In this subsection, we prove \eqref{ito_empirical}.
We write
\[m(t):=\#\Big\{s\in(0,t]:\,\sup_{1\leq i\leq N}\sup_{k\in K_s^i}Q^{i,k}\big(\{s\}\times[0,\gamma(s,X_s^{i,k},\mu_{s-}^N)]\times[0,1]\big)=1\Big\},\]
and consider the stopping times
\[s_m:=\inf\{t\in[0,T]:\, m(t)\geq m\},\quad\forall\,m\geq0,\]
which characterize the jump moments.
Note that it holds almost surely
\[m(T)<\infty,\quad s_{m(T)}\leq T<\infty=s_{m(T)+1},\quad s_{m}<s_{m+1}\ \text{for }m\leq m(T).\]
Also note that $K_t^i=K_{s_m}^i$ for all $t\in[s_m,s_{m+1}),\,1\leq i\leq N$.

\emph{Step 1}: For $t\in[0,T]$, there exists $0\leq m\leq m(T)$ such that $t\in[s_m,s_{m+1})$.
We aim to prove
\begin{equation}\label{ito_nojump}
    \begin{aligned}
        F(t,\mu_t^N)=&F(s_m,\mu_{s_m}^N)+\int_{s_m}^t\bigg(\partial_s F+\Big\langle\cL\frac{\delta F}{\delta\mu}(s,\cdot,\mu_s^N),\mu_s^N\Big\rangle\bigg)\,\d s\\
        &\,+\frac{1}{2N}\int_{s_m}^t\d s\int_{\R^d}\trace\big(D_\mu^2 F(s,\mu_s^N,x,x)\sigma\sigma^\transpose(s,x,\mu_s^N)\big)\,\mu_s^N(\d x)\\
        &\,+\frac{1}{N}\sum\limits_{i=1}^N\int_{s_m}^t\sum\limits_{k\in K_{s_m}^i} D_\mu F(s,\mu_s^N,X_s^{i,k})\cdot\sigma(s,X_s^{i,k},\mu_s^N)\,\d W_s^{i,k}.
    \end{aligned}
\end{equation}
Note that $F$ and $(K_{s_m}^i)_{i=1}^N$ (we abbreviate it to $K_{s_m}$ in this subsection) naturally induces the function 
\[F^{K_{s_m}}\in\cC_b^{1,2}\Big([s_m,t]\times\prod\limits_{i=1}^N\prod\limits_{k\in K_{s_m}^i}\R^d\Big)\]
such that
\[F^{K_{s_m}}\big(s,(x^{i,k})\big)=F\Big(s,\frac{1}{N}\sum_{i=1}^N\sum_{k\in K_{s_m}^i}\delta_{x^{i,k}}\Big).\]
Hence, $F(s,\mu_s^N)$ can be rewritten as
\begin{equation}\label{classical_transformation}
    F(s,\mu_s^N)=F^{K_{s_m}}\big(s,(X_s^{i,k})\big)
\end{equation}
for all $s\in[s_m,t]$,
and we have
\begin{align*}
    \partial_s F^{K_{s_m}}\big(s,(X_s^{i,k})\big)=&\,\partial_s F(s,\mu_s^N),\\
    D_{x^{i,k}}F^{K_{s_m}}\big(s,(X_s^{i,k})\big)=&\,\frac{1}{N}D_\mu F(s,\mu_s^N,X_s^{i,k}),\\
    D_{x^{i,k}}^2 F^{K_{s_m}}\big(s,(X_s^{i,k})\big)=&\,\frac{1}{N}D_x ^2\frac{\delta F}{\delta\mu}(s,\mu_s^N,X_s^{i,k})\\
    &\,+\frac{1}{N^2}D_\mu^2 F(s,\mu_s^N,X_s^{i,k},X_s^{i,k}),\\
    D_{x^{i^\prime,k^\prime}}D_{x^{i,k}}F^{K_{s_m}}\big(s,(X_s^{i,k})\big)=&\,\frac{1}{N^2}D_\mu^2 F(s,\mu_s^N,X_s^{i,k},X_s^{i^\prime,k^\prime}),\quad(i,k)\neq(i^\prime,k^\prime).
\end{align*}
Applying the classical It\^o's formula to \eqref{classical_transformation}, we obtain \eqref{ito_nojump}.

\emph{Step 2}: For $t\geq0$, we aim to prove
\begin{equation}\label{ito_jump}
    \begin{aligned}
        &\quad \sum_{m=1}^{m(t)}\big(F(s_m,\mu_{s_m}^N)-F(s_m,\mu_{s_m-}^N)\big)\\
        =&\int_0^t\sum_{i=1}^N\sum_{k\in K_{s-}^i}\sum_{l\geq0}\Big(F\big(s,\mu_{s-}^N+\frac{l-1}{N}\delta_{X_s^{i,k}}\big)-F\big(s,\mu_{s-}^N\big)\Big)\,Q^{i,k}(\d s\times[0,\gamma]\times I_l),
    \end{aligned}
\end{equation}
where $[0,\gamma]\times I_l=([0,\gamma]\times I_l)(s,X_s^{i,k},\mu_{s-}^N)$.
Note that
\[
\begin{aligned}
    &\mu_t^N-\mu_{t-}^N=\sum_{i=1}^N\sum_{k\in K_{t-}^i}\sum_{l\geq0}\frac{l-1}{N}\delta_{X_t^{i,k}}\,Q^{i,k}(\{t\}\times[0,\gamma]\times I_l).
\end{aligned}\]
Also note that for $i\neq i^\prime$ or $k\neq k^\prime$, Poisson random measures $Q^{i,k}$ and $Q^{i^\prime,k^\prime}$ are independent; for $l\neq l^\prime$, $I_l$ and $I_{l^\prime}$ does not intersect. 
Hence, it holds almost surely that only one particle dies and gives birth to $l$ new particles at some $t=s_m$.
Therefore, we have, almost surely,
\[F(t,\mu_t^N)=\sum_{i=1}^N\sum_{k\in K_{t-}^i}\sum_{l\geq0}F\big(t,\mu_{t-}^N+\frac{l-1}{N}\delta_{X_t^{i,k}}\big)\,Q^{i,k}(\{t\}\times[0,\gamma]\times I_l),\]
which leads to \eqref{ito_jump}.

\emph{Step 3}: For $t\in[0,T]$, it holds
\[
\begin{aligned}
    F(t,\mu_t^N)-F(0,\mu_0^N)=&F(t,\mu_t^N)-F(s_m(t),\mu_{s_{m(t)}}^N)+\sum_{m=1}^{m(t)}\big(F(s_m,\mu_{s_m-}^N)-F(s_{m-1},\mu_{s_{m-1}}^N)\big)\\
    &\quad+\sum_{m=1}^{m(t)}\big(F(s_m,\mu_{s_m}^N)-F(s_m,\mu_{s_m-}^N)\big).
\end{aligned}\]
Applying \eqref{ito_nojump} to
\[F(t,\mu_t^N)-F(s_{m(t)},\mu_{s_{m(t)}}^N)+\sum_{m=1}^{m(t)}\big(F(s_m,\mu_{s_m-}^N)-F(s_{m-1},\mu_{s_{m-1}}^N)\big),\]
and \eqref{ito_jump} to
\[\sum_{m=1}^{m(t)}\big(F(s_m,\mu_{s_m}^N)-F(s_m,\mu_{s_m-}^N)\big),\]
we obtain \eqref{ito_empirical}.
Note that we replace $Q^{i,k}(\d t\d z)$ with $\bar{Q}^{i,k}(\d t,\d z)+\d t\d z$,
and use
\[\mu_t^N=\mu_{t-}^N\quad\text{a.e.\quad in\quad} \Omega\times[0,T]\]
in \eqref{ito_empirical}.

The proof of \eqref{ito_empirical} is complete.
\section{Proof of Lemma~\ref{lifted_wellposedness}}\label{lifted_wellposedness_proof}
The idea of proof is derived from \cite[Theorem 5.2.1]{oksendal2013stochastic}, where we define $\{(Y^{(k)},Z^{(k)})\}_{k=0}^\infty$ as follows.
Let $(Y_t^{(0)},Z_t^{(0)})= (Y_0,Z_0)$ for all $t\in[0,T]$, and inductively,
    \[
    \left\{
    \begin{aligned}
        Y_t^{(k+1)}=\,&Y_0+\int_0^t b\big(s,Y_s^{(k)},\cT^*\rho^{(k)}_s\big)\,\d s+\int_0^t \sigma\big(s,Y_s^{(k)},\cT^*\rho^{(k)}_s\big)\,\d W_s,\\
        Z_t^{(k+1)}=\,&Z_0+\int_0^t c\big(s,Y_s^{(k)},\cT^*\rho^{(k)}_s\big) Z_s^{(k)}\,\d s,
    \end{aligned}
    \right.
    \]
for all $t\in[0,T]$, where $\rho^{(k)}_t:=\text{Law}(Y_t^{(k)},Z_t^{(k)})$.
Note that $Z_0\in[0,C]$, and hence we can inductively prove, by Assumption~\ref{assum:coefficients}, that 
\[Z_t^{(k)}\in [0,C\exp(\bar{\gamma}Mt)],\]
and thus
\[\rho_t^{(k)}\in\cP_2\big(\R^d\times [0,C\exp(\bar{\gamma}MT)]\big),\]
for all $t\in[0,T],\,k\geq 0.$
Hence, applying Assumption~\ref{assum:coefficients} again, we have
\[
\begin{aligned}
    &\quad\Expect \Big[\sup_{t\in[0,T]}\abs{Y_t^{(k+1)}-Y_t^{(k)}}^2\Big]\\
    =\,&\Expect\Big[\sup_{t\in[0,T]}\Big\vert \int_0^t \big(b\big(s,Y_s^{(k)},\cT^*\rho^{(k)}_s)-b\big(s,Y_s^{(k-1)},\cT^*\rho^{(k-1)}_s)\big)\,\d s\\
    &+\int_0^t\big(\sigma\big(s,Y_s^{(k)},\cT^*\rho^{(k)}_s)-\sigma\big(s,Y_s^{(k-1)},\cT^*\rho^{(k-1)}_s)\big)\,\d W_s\Big\vert^2\Big]\\
    \leq\,& 2\Expect\Big[\sup_{t\in[0,T]}\Big\vert \int_0^t \big(b\big(s,Y_s^{(k)},\cT^*\rho^{(k)}_s)-b\big(s,Y_s^{(k-1)},\cT^*\rho^{(k-1)}_s)\big)\,\d s\Big\vert^2\\
    &+\sup_{t\in[0,T]}\Big\vert\int_0^t\big(\sigma\big(s,Y_s^{(k)},\cT^*\rho^{(k)}_s)-\sigma\big(s,Y_s^{(k-1)},\cT^*\rho^{(k-1)}_s)\big)\,\d W_s\Big\vert^2\Big]\\
    \leq\,& 2\Expect\Big[T \int_0^T L^2\Big(\big\vert Y_t^{(k)}-Y_t^{(k-1)}\big\vert+\mathbf{d}(\cT^*\rho^{(k)}_t,\cT^*\rho^{(k-1)}_t) \Big)^2\,\d t\Big]\\
    &+2C_2\Expect\Big[\int_0^T\norm{\sigma\big(s,Y_s^{(k)},\cT^*\rho^{(k)}_s)-\sigma\big(s,Y_s^{(k-1)},\cT^*\rho^{(k-1)}_s)}^2\,\d s\Big],
\end{aligned}\]
where we use the Cauchy--Schwarz inequality and Burkholder--Davis--Gundy inequality in the last step, and $C_2$ is a positive constant.
By Lemma~\ref{Lipschitz_lifting} and definition of Wasserstein distance, it holds that
\[
\begin{aligned}
    &\quad\mathbf{d}(\cT^*\rho^{(k+1)}_t,\cT^*\rho^{(k)}_t)\leq (C\exp(\bar{\gamma}MT)+1)\cW_2(\rho^{(k)}_t,\rho^{(k-1)}_t)\\
    \leq\,& (C\exp(\bar{\gamma}MT)+1)\Expect\Big[\abs{Y_t^{(k)}-Y_t^{(k-1)}}^2+\abs{Z_t^{(k)}-Z_t^{(k-1)}}^2\Big]^{1/2},
\end{aligned}\]
and thus
\[
\begin{aligned}
    &\quad\Expect \Big[\sup_{t\in[0,T]}\abs{Y_t^{(k+1)}-Y_t^{(k)}}^2\Big]\\
    \leq\,& 4L^2 T\Expect\Big[\int_0^T\Big(\big\vert Y_t^{(k)}-Y_t^{(k-1)}\big\vert^2\\
    &\,+(C\exp(\bar{\gamma}MT)+1)^2\Expect\big[\big\vert Y_t^{(k)}-Y_t^{(k-1)}\big\vert^2+\big\vert Z_t^{(k)}-Z_t^{(k-1)}\big\vert^2\big]\Big)\,\d t\Big]\\
    &\,+4C_2 L^2 \Expect\Big[\int_0^T\Big(\big\vert Y_t^{(k)}-Y_t^{(k-1)}\big\vert^2\\
    &\,+(C\exp(\bar{\gamma}MT)+1)^2\Expect\big[\big\vert Y_t^{(k)}-Y_t^{(k-1)}\big\vert^2+\big\vert Z_t^{(k)}-Z_t^{(k-1)}\big\vert^2\big]\Big)\,\d t\Big]\\
    \leq\,& D \Expect\Big[\int_0^T \Big(\abs{Y_t^{(k)}-Y_t^{(k-1)}}^2+\abs{Z_t^{(k)}-Z_t^{(k-1)}}^2\Big)\,\d t\Big],
\end{aligned}\]
where $D\in(0,\infty)$ denotes a constant dependent only on $M,T,\bar{\gamma},C,C_2,L$,
whose value can vary through lines.
Analogously, we have
\[
\begin{aligned}
    &\quad\Expect \Big[\sup_{t\in[0,T]}\abs{Z_t^{(k+1)}-Z_t^{(k)}}^2\Big]\\
    \leq\,& T\Expect\Big[\int_0^T\Big\vert\big(c(t,Y_t^{(k)},\cT^*\rho_t^{(k)})-c(t,Y_t^{(k-1)},\cT^*\rho_t^{(k-1)})\big)Z_t^{(k)}\\
    &\,+c(t,Y_t^{(k-1)},\cT^*\rho_t^{(k-1)})(Z_t^{(k)}-Z_t^{(k)})\Big\vert^2\,\d t\Big] \\
    \leq\,& D \Expect\Big[\int_0^T \Big(\abs{Y_t^{(k)}-Y_t^{(k-1)}}^2+\abs{Z_t^{(k)}-Z_t^{(k-1)}}^2\Big)\,\d t\Big].
\end{aligned}\]
Hence, let us consider the Banach space $\mathbb{H}^2_{d+1}:=\mathbb{L}^2(\Omega;\cC([0,T];\R^{d+1}))$,
where 
\[\norm{U}_{\mathbb{H}^2_{d+1}}:=\Expect\Big[\sup_{t\in[0,T]}\abs{U_t}^2\Big]^{1/2},\quad\forall U\in \mathbb{H}^2_{d+1},\]
and we have
\[\norm{(Y,Z)^{(k+1)}-(Y,Z)^{(k)}}_{\mathbb{H}^2_{d+1}}^2\leq D \int_0^T \Expect\Big[\abs{Y_t^{(k)}-Y_t^{(k-1)}}^2+\abs{Z_t^{(k)}-Z_t^{(k-1)}}^2\Big]\,\d t\]
for all $k\geq 0$.
Note that the argument can directly yield
\[
\begin{aligned}
    &\quad\Expect\Big[\abs{Y_t^{(k+1)}-Y_t^{(k)}}^2+\abs{Z_t^{(k+1)}-Z_t^{(k)}}^2\Big]\\
    \leq\,&D\int_0^t \Expect\Big[\abs{Y_s^{(k+1)}-Y_s^{(k)}}^2+\abs{Z_s^{(k+1)}-Z_s^{(k)}}^2\Big]\,\d s
\end{aligned}\]
for all $k\geq0,t\in[0,T]$,
and it is clear that
\[\Expect\Big[\abs{Y_t^{(1)}-Y_t^{(0)}}^2+\abs{Z_t^{(1)}-Z_t^{(0)}}^2\Big]\leq\norm{(Y,Z)^{(1)}-(Y,Z)^{(0)}}_{\mathbb{H}^2_{d+1}}^2\leq D.\]
Now we fix $D$, and can inductively prove that
\[\Expect\Big[\abs{Y_t^{(k)}-Y_t^{(k-1)}}^2+\abs{Z_t^{(k)}-Z_t^{(k-1)}}^2\Big]\leq D\frac{(Dt)^{k-1}}{(k-1)!},\]
and consequently,
\[\norm{(Y,Z)^{(k)}-(Y,Z)^{(k-1)}}_{\mathbb{H}^2_{d+1}}^2\leq D\frac{(DT)^{k-1}}{(k-1)!}.\]
Hence, it holds that
\[
\begin{aligned}
    &\quad\norm{(Y,Z)^{(0)}}_{\mathbb{H}^2_{d+1}}+\sum_{k=1}^\infty\norm{(Y,Z)^{(k)}-(Y,Z)^{(k-1)}}_{\mathbb{H}^2_{d+1}}\\
    \leq\,& \norm{(Y,Z)^{(0)}}_{\mathbb{H}^2_{d+1}}+\sum_{k=1}^\infty\sqrt{D\frac{(DT)^{k-1}}{(k-1)!}}<\infty,
\end{aligned}
\]
and thus, by completeness,
\[(Y,Z):=(Y,Z)^{(0)}+\sum_{k=1}^\infty\big((Y,Z)^{(k)}-(Y,Z)^{(k-1)}\big)\in\mathbb{H}^2_{d+1},\]
with
\[
\begin{aligned}
    \norm{(Y,Z)^{(k)}-(Y,Z)}_{\mathbb{H}^2_{d+1}}\leq\,&\sum_{n=k+1}^\infty\norm{(Y,Z)^{(n)}-(Y,Z)^{(n-1)}}_{\mathbb{H}^2_{d+1}}\to0\\
    \sup_{t\in[0,T]}\cW_2(\rho_t^{(k)},\rho_t)\leq\,&\norm{(Y,Z)^{(k)}-(Y,Z)}_{\mathbb{H}^2_{d+1}}^2\to 0,
\end{aligned}\]
as $k\to\infty$, where $\rho_t=\text{Law}(Y_t,Z_t)$.
On the other hand, applying Chebyshev's inequality we have
\[
\begin{aligned}
    &\quad\sum_{k=1}^\infty\dbP\Big(\sup_{t\in[0,T]}\sqrt{\big\vert Y_t^{(k)}-Y_t^{(k-1)}\big\vert^2+\big\vert Z_t^{(k)}-Z_t^{(k-1)}\big\vert^2}\geq 2^{-k}\Big)\\
    \leq\,&\sum_{k=1}^\infty 2^{k}\norm{(Y,Z)^{(k)}-(Y,Z)^{(k-1)}}_{\mathbb{H}^2_{d+1}}<\infty.
\end{aligned}
\]
Consequently, there exists $N=N(\omega)$ for almost all $\omega\in\Omega$, by the Borel--Cantelli lemma, such that
\[\sup_{t\in[0,T]}\sqrt{\big\vert Y_t^{(k)}-Y_t^{(k-1)}\big\vert^2+\big\vert Z_t^{(k)}-Z_t^{(k-1)}\big\vert^2}< 2^{-k}\]
for all $k\geq N$,
and thus $(Y,Z)^{(k)}$ converges to $(Y,Z)$ almost surely.
Hence, we can verify that $(Y,Z)$ is a strong solution to the initial value problem for \eqref{lifted_MeanField}, and thus prove existence.

For strong solutions $(Y,Z)$ and $(\hat{Y},\hat{Z})$, it holds analogously that
\[\Expect\Big[\abs{Y_t-\hat{Y}_t}^2+\abs{Z_t-\hat{Z}_t}^2\Big]\leq \tilde{D}\int_0^t \Expect\Big[\abs{Y_s-\hat{Y}_s}^2+\abs{Z_s-\hat{Z}_s}^2\Big]\,\d s\]
for all $t\in[0,T]$, where $\tilde{D}\in (0,\infty)$ is a constant dependent only on $M,T,\bar{\gamma},C,C_2,L$.
Hence, we have, by Gronwall's inequality,
\[\Expect\Big[\abs{Y_t-\hat{Y}_t}^2+\abs{Z_t-\hat{Z}_t}^2\Big]\leq 0\cdot\exp(\tilde{D}t)=0\]
for all $t\in[0,T]$,
and thus
\[\dbP\big(Y_t=\hat{Y}_t,\,Z_t=\hat{Z}_t,\quad\forall t\in \mathbb{Q}\cap [0,T]\big)=1.\]
By continuity of paths, it holds that
\[(Y,Z)=(\hat{Y},\hat{Z})\quad \text{a.s.},\]
which yields uniqueness.

Combining the above, the proof of Lemma~\ref{lifted_wellposedness} is complete.

\section{Nonlinear Fokker--Planck equation and Flow property}\label{append_flow}
In this section, we study the Fokker--Planck equation associated with the branching diffusion \eqref{solution},
in order to establish the flow property for environment measures $(\mu_t)_{t\in[0,T]}$ of \eqref{solution} and prove Lemma~\ref{lem:mollify}.

\subsection{The nonlinear Fokker--Planck equation}

Let Assumptions~\ref{assum:coefficients} and \ref{assum:nondegen} hold, and we consider $t\in [0,T)$ and $\mu\in\cM_2(\R^d)$.
For each $m\in\cP_1(E)$ such that $\Psi m=\mu$, we write
\[\mu_s:=\Psi m_s^{t,m}\in\cM_2(\R^d),\quad s\in [t,T]\]
Recall from Lemma \ref{lem:FKP} that the flow of measure $(\mu_t)_{t \in [0,T]}$ satisfies:
for each $s\in[t,T]$ and $f\in\cC^{1,2}_b([t,T]\times\R^d)$,
\begin{equation} \label{F-P_duality_append}
    \langle f_s,\mu_s\rangle
        =\langle f_t,\mu\rangle+\int_t^s\Big\langle \partial_r f_r(\cdot)+\Big(\cL f_r+\gamma\Big(\sum\limits_l lp_l-1\Big)f_r\Big)(r,\cdot,\mu_r),\mu_r\Big\rangle\,\d r.
\end{equation}
Let $u=(\mu_s)_{s\in[t,T]}\in\cC([t,T];\cM_2(\R^d))$ (recall Lemma~\ref{time_continuity} for the time-continuity of environment measures),
we say that $u$ is a weak solution to the nonlinear parabolic Fokker--Planck equation
\begin{equation}\label{F-P}
    \left\{
    \begin{aligned}
        \partial_s u=\,&\Big(\cL^*+\gamma \big(\sum_l lp_l-1\big)\Big)u,\quad t\in(t,T],\\
        u(t)=\,&\mu
    \end{aligned}
    \right.
\end{equation}
in the sense of \eqref{F-P_duality_append},
where $\cL^*$ denotes the dual of the infinitesimal generator given by \eqref{infititesimal}.
If we impose some density conditions on the initial value, well-posedness of \eqref{F-P} can be obtained as follows (cf.~\cite[Proposition 2.4]{hambly2025optimal}; see also~\cite[Theorem A.1]{claisse2025optimal}).
\begin{prop}\label{prop:uniqueness}
   Let Assumptions~\ref{assum:coefficients} and \ref{assum:nondegen} hold.
   Suppose a weak derivative $(\partial_x\sigma\sigma^\transpose)(t,\cdot,\nu)\in\mathbb{L}_{\text{loc}}^1(\R^d;\R^{d\times d})$ exists for each $(t,\nu)\in [0,T]\times\cM_2(\R^d)$ and that $\mu\in\cM_2^\theta(\R^d)$ for some $\theta>0$ (recall Definition~\ref{density_regularity}).
   Then the Fokker--Planck equation \eqref{F-P} admits a weak unique solution 
   \[u=(\mu_s)_{s\in[t,T]}\in\cC([t,T];\cM_2(\R^d)),\]
   and we have $\mu_s\in\cM_2^\theta(\R^d)$ for each $s\in [t,T]$.
\end{prop}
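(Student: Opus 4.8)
The plan is to treat \eqref{F-P} as a nonlinear perturbation of a linear, non-degenerate parabolic Fokker--Planck equation and to follow the scheme of \cite[Proposition 2.4]{hambly2025optimal} and \cite[Theorem A.1]{claisse2025optimal}; the two features needing extra care relative to those references are the time-dependence of the coefficients and the reaction term $c=\gamma(\sum_l lp_l-1)$, which destroys conservation of mass. For \emph{existence} I would not construct anything new: by Lemma~\ref{lem:FKP} the flow $u=(\mu_s)_{s\in[t,T]}$ with $\mu_s=\Psi m_s^{t,m}$ (equivalently the flow $(\tilde\mu_s)$ induced by the lifted diffusion \eqref{lifted_MeanField}) lies in $\cC([t,T];\cM_2(\R^d))$ by Lemma~\ref{time_continuity} and satisfies \eqref{F-P_duality_append}, hence is a weak solution of \eqref{F-P}. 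To upgrade it to $\cM_2^\theta(\R^d)$ I would freeze the coefficients along $u$, so that $\mu_s$ solves a \emph{linear} parabolic equation whose principal part $\tfrac12\partial_{x_ix_j}((\sigma\sigma^\transpose)_{ij}\,\cdot\,)$ is uniformly elliptic by Assumption~\ref{assum:nondegen} and has an $\mathbb{L}^1_{\mathrm{loc}}$ weak $x$-derivative by hypothesis, while the drift and the reaction term are bounded by Assumption~\ref{assum:coefficients}; standard parabolic regularity (Aronson-type bounds on the fundamental solution, or an energy estimate) then produces a density $\varrho_s$, and a Gr\"onwall estimate on $s\mapsto\int_{\R^d}\varrho_s(x)^2\exp(\theta\sqrt{1+|x|^2})\,\d x$ — in which the bounded reaction term contributes only an exponential-in-time factor — propagates the exponential-weight bound from $s=t$ to all $s\in[t,T]$.

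For \emph{uniqueness}, let $u^1=(\mu^1_s)$ and $u^2=(\mu^2_s)$ be two weak solutions in $\cC([t,T];\cM_2(\R^d))$ with $u^i(t)=\mu\in\cM_2^\theta(\R^d)$; by the existence step each $\mu^i_s$ lies in $\cM_2^\theta(\R^d)$. Freezing the coefficients along $u^i$ turns \eqref{F-P} into a linear non-degenerate parabolic Fokker--Planck equation, for which uniqueness within the class of flows whose densities satisfy the weighted $\mathbb{L}^2$ bound of Definition~\ref{density_regularity} is exactly the linear result invoked in \cite[Theorem A.1]{claisse2025optimal}; equivalently, by the superposition principle each $\mu^i_\cdot$, lifted through $\Phi$ (recall \eqref{lifting_inverse}) to $\R^{d+1}$, is realized as the $Z$-weighted time-marginal flow of a solution $(Y^i,Z^i)$ to the SDE \eqref{lifted_MeanField} with $\tilde\mu$ replaced by the frozen flow $\mu^i$, which is pathwise unique by the argument of Lemma~\ref{lifted_wellposedness}. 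Coupling the two representations along the same Brownian motion, the usual $\mathbb{L}^2$ SDE estimate under Assumption~\ref{assum:coefficients} gives $\Expect[|Y^1_s-Y^2_s|^2+|Z^1_s-Z^2_s|^2]\le C\int_t^s\big(\Expect[|Y^1_r-Y^2_r|^2+|Z^1_r-Z^2_r|^2]+\mathbf{d}(\mu^1_r,\mu^2_r)^2\big)\,\d r$, while Lemma~\ref{Lipschitz_lifting} bounds $\mathbf{d}(\mu^1_r,\mu^2_r)$ by $C\,(\Expect[|Y^1_r-Y^2_r|^2+|Z^1_r-Z^2_r|^2])^{1/2}$; Gr\"onwall's lemma then forces $\mu^1\equiv\mu^2$.

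The hard part will be the uniqueness of the \emph{linearized} equation: with only bounded, $x$-Lipschitz and merely time-measurable coefficients one cannot invoke parabolic Schauder theory, so one must work in the weighted $\mathbb{L}^2$ class of Definition~\ref{density_regularity} — and this is precisely where the hypotheses $\mu\in\cM_2^\theta(\R^d)$ and $\partial_x\sigma\sigma^\transpose\in\mathbb{L}^1_{\mathrm{loc}}$ enter, the latter being needed to put the equation into (a perturbation of) divergence form so that the energy/superposition machinery applies. A second point of care is that the non mass-conserving reaction term blocks a direct reduction to a probability-measure Fokker--Planck equation; I would deal with this by systematically passing through the lifting $\cT^*$ (and its ``inverse'' $\Phi$) to the conservative setting on $\R^{d+1}$, exactly as in the existence proof of \eqref{lifted_MeanField}. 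Once these two points are handled, \cite[Proposition 2.4]{hambly2025optimal} and \cite[Theorem A.1]{claisse2025optimal} apply essentially verbatim, and the membership $\mu_s\in\cM_2^\theta(\R^d)$ for all $s\in[t,T]$ follows from the existence step.
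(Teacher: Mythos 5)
The paper offers no proof of Proposition~\ref{prop:uniqueness}: it is imported wholesale from \cite[Proposition 2.4]{hambly2025optimal} and \cite[Theorem A.1]{claisse2025optimal}, so there is no step-by-step argument of the authors to compare yours against. Your reconstruction is nevertheless the natural one and is consistent with the route those references (and the rest of this paper) take: existence by observing that the flow induced by the lifted diffusion \eqref{lifted_MeanField} (equivalently by the branching process, via Lemma~\ref{lem:FKP} and Lemma~\ref{time_continuity}) is already a weak solution of \eqref{F-P}, and uniqueness by freezing the coefficients along each solution, identifying it with the $Z$-weighted marginal of the frozen SDE on $\R^{d+1}$, coupling the two representations and closing with Lemma~\ref{Lipschitz_lifting} and Gr\"onwall; the coupling estimate itself is the same computation as in Sections~\ref{lifted_wellposedness_proof} and~\ref{proof:mollify} and is correct as far as it goes.

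There is, however, one genuinely circular step: ``by the existence step each $\mu^i_s$ lies in $\cM_2^\theta(\R^d)$''. The existence step yields the weighted-density bound only for the particular solution constructed from the lifted SDE; an arbitrary weak solution $u^i\in\cC([t,T];\cM_2(\R^d))$ of the frozen linear equation is not known a priori to have a density at all, much less one in the class of Definition~\ref{density_regularity}. If the linear input you then invoke is only ``uniqueness within the class of flows whose densities satisfy the weighted $\mathbb{L}^2$ bound'', your argument cannot exclude a measure-valued solution lying outside that class, and the uniqueness asserted in the proposition --- in the full class $\cC([t,T];\cM_2(\R^d))$, with regularity imposed only on the initial datum --- does not follow. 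The statement you need to import (and what \cite[Proposition 2.4]{hambly2025optimal} and \cite[Theorem A.1]{claisse2025optimal} must be read as providing, since this is exactly what the proposition claims) is uniqueness of \emph{measure-valued} solutions of the frozen equation given only $\mu\in\cM_2^\theta(\R^d)$, obtained e.g.\ through a regularity result for arbitrary measure solutions of non-degenerate Fokker--Planck equations or through duality with a sufficiently regular solution of the backward equation; once the linear result is stated in that form, your preliminary claim about $\mu^i_s$ becomes superfluous and the rest of your reduction (coupling, Lemma~\ref{Lipschitz_lifting}, Gr\"onwall) goes through. A minor related point: for the constructed solution, the propagation $\mu_s\in\cM_2^\theta(\R^d)$ is more safely derived from the representation $\mu_s=\cT^*\Law(Y_s,Z_s)$ (boundedness of $Z$ together with Gaussian-type bounds for the density of the non-degenerate $Y$-component), since appealing to ``standard parabolic regularity'' for the frozen PDE again presupposes the very identification of the measure solution with the regular solution that is at issue.
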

\begin{rem}
    It is clear that existence of the weak derivative of $\sigma\sigma^\transpose(t,\cdot,\nu)$ is ensured by Assumption~\ref{assum:differentiability}.
    For a more general $\sigma$, this can be fulfilled through mollification (see Subsection~\ref{proof:mollify}).
\end{rem}
Combined with flow property of $m_t$, uniqueness of the weak solution directly leads to the following result.
\begin{cor}\label{cor:flow}
    Suppose Assumptions~\ref{assum:coefficients} and \ref{assum:nondegen} hold,
    and that a weak derivative $(\partial_x\sigma\sigma^\transpose)(t,\cdot,\nu)\in\mathbb{L}_{\text{loc}}^1(\R^d;\R^{d\times d})$ exists for each $(t,\nu)\in [0,T]\times\cM_2(\R^d)$.
    Then for each $t\in [0,T],\theta>0$, the mapping
    \begin{equation}\label{flow_branching_environment}
        \begin{aligned}
        \mu_\cdot^{t,\cdot}:[t,T]\times\cM_2^\theta(\R^d)\,&\to\cM_2^\theta(\R^d)\\
        (s,\mu)\,&\mapsto\mu_s^{t,\mu}:=\Psi m_s^{t,m},\,m\in\Psi^{-1}(\{\mu\})
    \end{aligned}
    \end{equation}
     is well-defined.
     Furthermore, it holds that
     \[\mu_s^{t,\mu_t}=\mu_s,\,\mu_s^{r,\mu_r^{t,\mu}}=\mu_s^{t,\mu}\]
     for each $0\leq t\leq r\leq s\leq T,\,\mu\in\cM_2^\theta(\R^d)$.
\end{cor}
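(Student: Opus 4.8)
The plan is to deduce Corollary~\ref{cor:flow} from the uniqueness of weak solutions to the nonlinear Fokker--Planck equation \eqref{F-P}, which is available here since the standing hypotheses include the existence of the weak derivative $(\partial_x\sigma\sigma^\transpose)(t,\cdot,\nu)\in\mathbb{L}^1_{\text{loc}}$, so that Proposition~\ref{prop:uniqueness} applies to every initial datum in $\cM_2^\theta(\R^d)$. Although the branching flow $m_\cdot^{t,\cdot}$ on $\cP_1(E)$ from \eqref{law_flow} already enjoys the flow property on $E$, its image through $\Psi$ is only candidate-valued a priori; uniqueness of \eqref{F-P} will collapse all such candidates to one.

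\textbf{Well-definedness of \eqref{flow_branching_environment}.} Fix $t\in[0,T]$, $\theta>0$ and $\mu\in\cM_2^\theta(\R^d)$. By surjectivity of $\Psi$ (Remark~\ref{remark_E_projection}) pick any $m\in\cP_1(E)$ with $\Psi m=\mu$; since $\Psi m\in\cM_2(\R^d)$ the branching SDE \eqref{SDE_E_MeanField} started from $\bar{Z}_t\sim m$ is well posed (Assumption~\ref{assum:coefficients}; cf.~\cite{claisse2024mckean}), hence $m_s^{t,m}$ is defined for $s\in[t,T]$ and, by Lemma~\ref{time_continuity}, $s\mapsto\Psi m_s^{t,m}$ lies in $\cC([t,T];\cM_2(\R^d))$. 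By Lemma~\ref{lem:FKP} this curve solves \eqref{F-P_duality_append}, hence is a weak solution of \eqref{F-P} with datum $\mu$ at time $t$. Proposition~\ref{prop:uniqueness}, applicable because $\mu\in\cM_2^\theta(\R^d)$, then forces this weak solution to be unique and to remain in $\cM_2^\theta(\R^d)$. Consequently $\Psi m_s^{t,m}$ is independent of the representative $m\in\Psi^{-1}(\{\mu\})$, so $\mu_s^{t,\mu}:=\Psi m_s^{t,m}$ is well defined and $\cM_2^\theta(\R^d)$-valued.

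\textbf{Flow property.} For the semigroup identity, fix $0\le t\le r\le s\le T$ and $\mu\in\cM_2^\theta(\R^d)$, and choose $m\in\Psi^{-1}(\{\mu\})$. Then $\mu_r^{t,\mu}=\Psi m_r^{t,m}$ by definition, $\mu_r^{t,\mu}\in\cM_2^\theta(\R^d)$ by the previous step, and $m_r^{t,m}\in\cP_1(E)$ with $\Psi m_r^{t,m}=\mu_r^{t,\mu}$ (the expected mass is finite by \eqref{mass_growth}). Since $\mu_s^{r,\cdot}$ is well defined at $\mu_r^{t,\mu}$, it may be evaluated through the representative $m_r^{t,m}$, and the flow property of $m_\cdot^{\cdot,\cdot}$ on $E$ yields $\mu_s^{r,\mu_r^{t,\mu}}=\Psi m_s^{r,m_r^{t,m}}=\Psi m_s^{t,m}=\mu_s^{t,\mu}$. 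The identity $\mu_s^{t,\mu_t}=\mu_s$ (to be read whenever $\mu_t\in\cM_2^\theta(\R^d)$, in particular under $\mu_0\in\cM_2^\theta(\R^d)$) follows identically, using the representative $m=m_t=\text{Law}(\bar{Z}_t)$ of $\mu_t$ and $m_s^{t,m_t}=m_s$.

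\textbf{Expected main obstacle.} As the uniqueness for \eqref{F-P} is quoted from Proposition~\ref{prop:uniqueness}, the only genuinely delicate point is that each representative measure appearing along the construction --- notably $m_r^{t,m}$ --- still projects into $\cM_2^\theta(\R^d)$, so that the uniqueness result keeps applying at the intermediate time $r$; this stability of the $\theta$-weighted $L^2$ bound on the density along the flow is exactly the closing assertion of Proposition~\ref{prop:uniqueness}, while \eqref{mass_growth} simultaneously guarantees $m_r^{t,m}\in\cP_1(E)$.
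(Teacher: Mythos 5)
Your proposal is correct and follows essentially the same route as the paper: project the branching flow $m_\cdot^{t,m}$ through $\Psi$, identify the resulting curve as a weak solution of the nonlinear Fokker--Planck equation \eqref{F-P} via Lemma~\ref{lem:FKP} (in the form \eqref{F-P_duality_append}), invoke the uniqueness and $\cM_2^\theta$-propagation of Proposition~\ref{prop:uniqueness} to obtain independence of the representative $m\in\Psi^{-1}(\{\mu\})$, and then transfer the flow property of $m_\cdot^{t,\cdot}$ on $\cP_1(E)$. Your treatment of the intermediate-time issue (that $\mu_r^{t,\mu}$ stays in $\cM_2^\theta(\R^d)$ and $m_r^{t,m}\in\cP_1(E)$ by \eqref{mass_growth}) matches the paper's intended argument, which it states only as a one-line consequence of uniqueness plus the flow property of $m_t$.
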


\subsection{Proof of Lemma~\ref{lem:mollify}}\label{proof:mollify}
    Recall $\tilde{\mu_t}$ in \eqref{lifted_MeanField}, and we observe that
    $(\tilde{\mu_t})_{t\in[0,T]}$ also induces a weak solution to \eqref{F-P}.
    Lemma~\ref{lem:mollify}, however, does not assume existence of either density of initial distributions or a weak derivative of $\sigma\sigma^\transpose(t,\cdot,\mu)$.
    Hence, the key idea is to truncate and mollify \eqref{lifted_MeanField} to ensure uniqueness of the solution to its corresponding Fokker--Planck equation.
    Note that similar methods were used in \cite[Proposition A.2]{claisse2025optimal}.
    
    Without loss of generality, we assume $t=0$ and consider strong solutions $(Y,Z),\,(\hat{Y},\hat{Z})$ of \eqref{lifted_MeanField} with marginal distributions
    \[\text{Law}(Y_t,Z_t)=:\rho_t,\quad\text{Law}(\hat{Y}_t,\hat{Z}_t)=:\hat{\rho}_t,\quad t\in[0,T]\]
    such that $\rho_0=\rho,\,\hat{\rho}_0=\hat{\rho}$.
    It suffices to show $\cT^*\rho_t=\cT^*\hat{\rho}_t$ for each $t\in[0,T]$.
    For $\varepsilon>0$, let $\{\alpha^\varepsilon(\cdot)\}_{\varepsilon>0}\subset\cC_c^\infty(\R^d)$ a classical family of mollifiers such that 
    \[\alpha^\varepsilon\geq 0,\quad\int_{\R^d}\alpha^\varepsilon(x)\,\d x=1,\quad\text{supp}(\alpha^\varepsilon)=\{x\in\R^d:\abs{x}\leq \varepsilon\}.\]
    We mollify $\sigma$ through the component-wise convolution
    \[\sigma^\varepsilon(t,\cdot,\nu):=\sigma(t,\cdot,\nu)*\alpha^\varepsilon:=\int_{\R^d}\alpha^\varepsilon(y)\sigma(t,\cdot-y,\nu)\,\d y\]
    for each $(t,\nu)\in[0,T]\times\cM_2(\R^d)$.
    It is clear that $\{\sigma^\varepsilon(\cdot)\}_{\varepsilon>0}$ are bounded Lipschitz as required by Assumption~\ref{assum:coefficients}, uniformly for $\varepsilon>0$, and $\sigma^\varepsilon(t,\cdot,\nu)$ is smooth for each $\varepsilon>0$, which ensures existence of a partial derivative.
    We also observe, for each $(t,\nu)\in[0,T]\times\cM_2(\R^d)$, (recall the Lipschitz constant $L$ in Assumption~\ref{assum:coefficients})
    \[\norm{\sigma^\varepsilon(t,\cdot,\nu)-\sigma(t,\cdot,\nu)}_\infty\leq L\varepsilon\to0,\quad\varepsilon\to0,\]
    and that $\sigma^\varepsilon(\sigma^\varepsilon)^\transpose(\cdot)$ is still uniformly non-degenerate as Assumption~\ref{assum:nondegen}, for each fixed small enough $\varepsilon>0$.
    On the other hand, we consider a standard Gaussian variable $\xi$ with density $f_\xi$, independent of initial values $(Y_0,Z_0)$ and $(\hat{Y}_0,\hat{Z}_0)$.
    For $x=(x_1,...,x_d)\in\R^d,\,a\in\R$, we write
    \[x\wedge a:=(x_1\wedge a,...,x_d\wedge a),\quad x\vee a:=(x_1\vee a,...,x_d\vee a).\]
    Now we can construct approximating diffusions $(Y^\varepsilon,Z^\varepsilon)$ and $(\hat{Y}^\varepsilon,\hat{Z}^\varepsilon)$ with marginal distributions $(\rho_t^\varepsilon)$ and $(\hat{\rho}_t^\varepsilon)$, respectively.
    We set the initial values
    \[
    \begin{aligned}
        \left\{
    \begin{aligned}
        Y_0^\varepsilon&=\big(-\frac{1}{\varepsilon}\big)\vee \big(Y_0\wedge\frac{1}{\varepsilon}\big)+\varepsilon\xi,\\
        Z_0^\varepsilon&=Z_0,
    \end{aligned}
    \right.
    \quad
    \left\{
    \begin{aligned}
        \hat{Y}_0^\varepsilon&=\big(-\frac{1}{\varepsilon}\big)\vee \big( \hat{Y}_0\wedge\frac{1}{\varepsilon}\big)+\varepsilon\xi,\\
        \hat{Z}_0^\varepsilon&=\hat{Z}_0,
    \end{aligned}
    \right.
    \end{aligned}
    \]
    and establish SDEs
    \[
    \begin{aligned}
        \left\{
    \begin{aligned}
        \d Y_t^\varepsilon&=b(t,Y_t^\varepsilon,\cT^*\rho_t^\varepsilon)\,\d t+\sigma^\varepsilon(t,Y_t^\varepsilon,\cT^*\rho_t^\varepsilon)\,\d W_t,\\
        \d Z_t^\varepsilon&=c(t,Y_t^\varepsilon,\cT^*\rho_t^\varepsilon)Z_t^\varepsilon\,\d t,
    \end{aligned}
    \right.
    \\
    \left\{
    \begin{aligned}
        \d \hat{Y}_t^\varepsilon&=b(t,\hat{Y}_t^\varepsilon,\cT^*\hat{\rho}_t^\varepsilon)\,\d t+\sigma^\varepsilon(t,\hat{Y}_t^\varepsilon,\cT^*\hat{\rho}_t^\varepsilon)\,\d W_t,\\
        \d \hat{Z}_t^\varepsilon&=c(t,\hat{Y}_t^\varepsilon,\cT^*\hat{\rho}_t^\varepsilon)\hat{Z}_t^\varepsilon\,\d t,
    \end{aligned}
    \right.
    \end{aligned}
    \]
    whose well-posedness holds by Lemma~\ref{lifted_wellposedness}.
    For each $\varphi\in\cC_b(\R^d)$, we observe, by independence, that
    \[
    \begin{aligned}
        &\quad\Expect[\varphi(Y_0^\varepsilon)Z_0^\varepsilon]=\int_{\R^d}\d x\, f_\xi(x)\Expect\Big[\varphi\Big(\big(-\frac{1}{\varepsilon}\big)\vee \big(Y_0\wedge\frac{1}{\varepsilon}\big)+\varepsilon x\Big) Z_0\Big]\\
        =\,&\int_{\R^d}\d x\, f_\xi(x)\Expect\Big[\varphi\Big(\big(-\frac{1}{\varepsilon}\big)\vee \big(\hat{Y}_0\wedge\frac{1}{\varepsilon}\big)+\varepsilon x\Big) \hat{Z}_0\Big]
        =\Expect[\varphi(\hat{Y}_0^\varepsilon)\hat{Z}_0^\varepsilon],
    \end{aligned}\]
    which implies $\cT^* \rho_0^\varepsilon=\cT^*\hat{\rho}_0^\varepsilon$.
    Furthermore, let $g:\R^d\to\R$ be some Borel measurable function,
    which is almost everywhere bounded nonnegative,
    such that
    \[\Expect[Z_0^\varepsilon | Y_0^\varepsilon]=g(Y_0^\varepsilon),\]
    and we have
    \[
    \begin{aligned}
        \Expect[\varphi(Y_0^\varepsilon)Z_0^\varepsilon]=\,&\Expect\Big[\Expect[\varphi(Y_0^\varepsilon)Z_0^\varepsilon |Y_0^\varepsilon]\Big]=\Expect[\varphi(Y_0^\varepsilon)g(Y_0^\varepsilon)],
    \end{aligned}
    \]
    \ie~$\cT^* \rho_0^\varepsilon$ has a density
    \[\varrho^\varepsilon(y):=g(y)\varrho_Y(y),\quad y\in\R^d.\]
    Here we denote by
    \[\varrho_Y(y):=\int_{\R^d}\frac{1}{\varepsilon^d}f_\xi(\frac{y-y^\prime}{\varepsilon})\rho_{(Y_0,\varepsilon)}(\d y^\prime),\quad y\in\R^d \]
    the density of $Y_0^\varepsilon$,
    for the marginal distribution 
    \[\rho_{(Y_0,\varepsilon)}:=\text{Law}\Big(\big(-\frac{1}{\varepsilon}\big)\vee \big( Y_0\wedge\frac{1}{\varepsilon}\big)\Big).\]
    Therefore, we observe, for each fixed $\varepsilon>0$, that
    \[\varrho^\varepsilon(y)\leq C_\varepsilon\exp\big(-\frac{\abs{y}^2}{2C_\varepsilon}\big),\quad\text{a.e. }y\in\R^d,\]
    for some $C_\varepsilon\in(0,\infty)$,
    which implies $\cT^*\rho_0^\varepsilon\in\cM_2^\theta(\R^d)$ for some $\theta>0$.
    Note that $(\cT^*\rho_t^\varepsilon)_{t\in[0,T]}$ and $(\cT^*\hat{\rho}_t^\varepsilon)_{t\in[0,T]}$ are weak solutions to the same nonlinear Fokker--Planck equation of the form \eqref{F-P}, with $\sigma$ replaced by $\sigma^\varepsilon$.
    Hence, applying Proposition~\ref{prop:uniqueness} we have
    \[\cT^*\rho_t^\varepsilon=\cT^*\hat{\rho}_t^\varepsilon,\quad t\in[0,T],\]
    for all small enough $\varepsilon>0$.

    On the other hand, through similar techniques to those used in Section~\ref{lifted_wellposedness_proof} for uniqueness,
    we can obtain
    \[\Expect\big[\abs{Y_t^\varepsilon-Y_t}^2+\abs{Z_t^\varepsilon-Z_t}^2\big]\leq C\Expect\big[\abs{Y_0^\varepsilon-Y_0}^2\big],\quad t\in[0,T],\]
    for some constant $C\in(0,\infty)$ independent of $\varepsilon$ and $t$,
    which implies
    \[\cW_2(\rho_t^\varepsilon,\rho_t)\to 0,\quad\varepsilon\to0^+,\]
    for each $t\in[0,T]$.
    Analogously, this holds for $(\hat{\rho}_t^\varepsilon)_{t\in[0,T]}$,
    and thus, by Lemma~\ref{Lipschitz_lifting},
    \[\cT^*\rho_t=\lim_{\varepsilon\to0^+}\cT^*\rho_t^\varepsilon=\lim_{\varepsilon\to0^+}\cT^*\hat{\rho}_t^\varepsilon=\cT^*\hat{\rho}_t.\]

    The proof is complete.

\section{Proof of Lemma~\ref{intrinsic_is_linear}}\label{proof_intrinsic_is_linear}
Let us firstly introduce the notation $(Y,Z)_\cdot^{t,(y,z),\rho}$ derived from \cite{10.1214/15-AOP1076}, namely,
\[
 \left\{
    \begin{aligned}
        Y_s^{t,(y,z),\rho}&=y+\int_t^s b(r,Y_r^{t,(y,z),\rho},\tilde{\mu}_r^{t,\mu})\,\d r+\int_t^s\sigma(r,Y_r^{t,(y,z),\rho},\tilde{\mu}_r^{t,\mu})\,\d W_r,\\
        Z_s^{t,(y,z),\rho}&=z+\int_t^s c(r,Y_r^{t,(y,z),\rho},\tilde{\mu}_r^{t,\mu})Z_r^{t,(y,z),\rho}\,\d r,
    \end{aligned}
    \right.\]
for $s\in[t,T]$, $(y,z)\in\R^d\times[0,C],\,\rho\in\cP_2(\R^d\times[0,C])$.
Here $\mu=\cT^*\rho$ and (recall \eqref{flow_lifting_defn}) $\tilde{\mu}_r^{t,\mu}=\cT^*\rho_r^{t,\rho},\,r\in[t,T]$.
By \cite[Lemma 6.1]{10.1214/15-AOP1076} and Lemma \ref{regularity_G_lifting}, it holds that
\begin{equation}\label{intrinsic_derivative_lifted}
    \begin{aligned}
        &\quad(D_\rho \tilde{U})_j(t,\rho,(y,z))=\,\sum_{i=1}^{d+1}\Expect\Big[(D_\rho\tilde{G})_i(\rho_T^{t,\rho},(Y,Z)_T^{t,(y,z),\rho})\cdot\partial_{y_j} (Y,Z)_{T,i}^{t,(y,z),\rho}\\
        &\,+(D_\rho\tilde{G})_i(\rho_T^{t,\rho},(Y,Z)_T^{t,\xi})\cdot(D_\rho (Y,Z)_{T,i}^{t,\xi,\rho})_{j}(y,z)\Big]\\
        =&\,\sum_{i=1}^d\Expect\Big[(D_\mu G)_i(\tilde{\mu}_T^{t,\mu},Y_T^{t,(y,z),\rho})Z_{T,i}^{t,(y,z),\rho}\cdot \partial_{y_j} Y_{T,i}^{t,(y,z),\rho}
        +(D_\mu G)_i(\tilde{\mu}_T^{t,\mu},Y_T^{t,\xi})Z_{T,i}^{t,\xi}\cdot(D_\rho Y_{T,i}^{t,\xi,\rho})_{j}(y,z)\Big]\\
        &\,+\Expect\Big[\frac{\delta G}{\delta\mu}(\tilde{\mu}_T^{t,\mu},Y_T^{t,(y,z),\rho})\cdot \partial_{y_j} Z_T^{t,(y,z),\rho}
        +\frac{\delta G}{\delta\mu}(\tilde{\mu}_T^{t,\mu},Y_T^{t,\xi})\cdot(D_\rho Z_T^{t,\xi,\rho})_{j}(y,z)\Big],\quad 1\leq j\leq d+1,
    \end{aligned}
\end{equation}
where $\text{Law}(\xi)=\rho,\,y_{d+1}=:z$.
Note that the intrinsic derivative of $\tilde{U}$ relies heavily on derivatives of $Y$ and $Z$.   
Hence, we turn back to \eqref{lifted_MeanField}.
Without loss of generality, we assume the coefficients $b,\sigma,c$ are time homogeneous.
Note that $b,\sigma$ and $c$ are independent of $z$, the $(d+1)$th variable.
Hence, by \cite[Lemma 4.1]{10.1214/15-AOP1076}, the first derivative $(\partial_z (Y,Z)_s^{t,(y,z),\rho})_{s\in[t,T]}$ is the unique solution of
the SDE
\begin{equation}
    \left\{
    \begin{aligned}
        \partial_z Y_{s,j}^{t,(y,z),\rho}=&\,\sum_{k=1}^d\int_t^s\partial_{y_k} b_j(Y_r^{t,(y,z),\rho},\tilde{\mu}_r^{t,\mu})\partial_z Y_{r,k}^{t,(y,z),\rho}\,\d r\\
            &\,+\sum_{k,\,l=1}^d\int_t^s\partial_{y_k} \sigma_{j,l}(Y_r^{t,(y,z),\rho},\tilde{\mu}_r^{t,\mu})\partial_z Y_{r,k}^{t,(y,z),\rho}\,\d W_r^{l},\\
            &\qquad\qquad\qquad\qquad\qquad\qquad\qquad\qquad\qquad \text{for}\quad j=1,...,d,\\
        \partial_z Z_s^{t,(y,z),\rho}=&\,1+\sum_{k=1}^d\int_t^s\partial_{y_k} c(Y_r^{t,(y,z),\rho},\tilde{\mu}_r^{t,\mu})\partial_z Y_{r,k}^{t,(y,z),\rho}\,\d r\\
        &\,+\int_t^s c(Y_r^{t,(y,z),\rho},\tilde{\mu}_r^{t,\mu})\partial_z Z_r^{t,(y,z),\rho}\,\d r
    \end{aligned}
    \right.
\end{equation}
and thus we have
\[\partial_z Y_{s,j}^{t,(y,z),\rho}=0,\quad \partial_z Z_s^{t,(y,z),\rho}=\exp\Big(\sum_{k=1}^d\int_t^s c(Y_r^{t,(y,z),\rho},\tilde{\mu}_r^{t,\mu})\,\d r\Big)\]
for each $s\in[t,T],\,j=1,...,d$.
Thus it holds
\begin{equation}\label{lifted_solution}
    Y_{s}^{t,(y,z),\rho}=Y_{s}^{t,y,\rho},\quad Z_s^{t,(y,z),\rho}=z\exp\Big(\sum_{k=1}^d\int_t^s c(Y_r^{t,y,\rho},\tilde{\mu}_r^{t,\mu})\,\d r\Big)=:zF_{s}^{t,y,\rho}.
\end{equation}
for all $(y,z)\in\R^d\times[0,C]$ and $\rho\in\cP_2(\R^d\times[0,C])$.

By \cite[Theorem 4.1]{10.1214/15-AOP1076}, the intrinsic derivative 
\[D_\rho (Y,Z)^{t,(u,v),\rho}(y,z):=\big(D_\rho Y_{s,i,j}^{t,(u,v),\rho}(y,z),D_\rho Z_{s,j}^{t,(u,v),\rho}(y,z)\big)_{s\in[t,T],1\leq i\leq d,1\leq j\leq d+1}\]
is the unique solution to the SDE
\begin{equation}\label{lifted_intrinsic_sde}
    \left\{
    \begin{aligned} 
        D_\rho Y_{s,i,j}^{t,(u,v),\rho}(y,z)=\,&\sum_{k=1}^d\int_t^s\partial_{y_k} b_i(Y_r^{t,u,\rho},\tilde{\mu}_r^{t,\mu})D_\rho Y_{r,k,j}^{t,(u,v),\rho}(y,z)\,\d r\\
            &\,+\sum_{k,\,l=1}^d\int_t^s\partial_{y_k} \sigma_{i,l}(Y_r^{t,u,\rho},\tilde{\mu}_r^{t,\mu})D_\rho Y_{r,k,j}^{t,(u,v),\rho}(y,z)\,\d W_r^{l}\\
            &\,+\sum_{k=1}^d\int_t^s\Expect\Big[(D_\mu b_i)_k(y^\prime,\tilde{\mu}_r^{t,\mu},Y_r^{t,y,\rho})Z_r^{t,(y,z),\rho}\partial_{y_j}Y_{r,k}^{t,y,\rho}\\
            &\,+(D_\mu b_i)_k(y^\prime,\tilde{\mu}_r^{t,\mu},Y_r^{t,\xi})Z_r^{t,\xi}U_{r,k,j}^{t,\xi}(y,z)\Big]\big|_{y^\prime=Y_r^{t,u,\rho}}\,\d r,\\
            &\,+\sum_{k,\,l=1}^d\int_t^s\Expect\Big[(D_\mu \sigma_{i,l})_k(y^\prime,\tilde{\mu}_r^{t,\mu},Y_r^{t,y,\rho})Z_r^{t,(y,z),\rho}\partial_{y_j}Y_{r,k}^{t,y,\rho}\\
            &\,+(D_\mu \sigma_{i,l})_k(y^\prime,\tilde{\mu}_r^{t,\mu},Y_r^{t,\xi})Z_r^{t,\xi}U_{r,k,j}^{t,\xi}(y,z)\Big]\big|_{y^\prime=Y_r^{t,u,\rho}}\,\d W_r^{l},\\
            &\,+\int_t^s\Expect\Big[\delta_\mu b_i(y^\prime,\tilde{\mu}_r^{t,\mu},Y_r^{t,y,\rho})\partial_{y_j}Z_r^{t,(y,z),\rho}\\
            &\,+\delta_\mu b_i(y^\prime,\tilde{\mu}_r^{t,\mu},Y_r^{t,\xi})U_{r,d+1,j}^{t,\xi}(y,z)\Big]\big|_{y^\prime=Y_r^{t,u,\rho}}\,\d r,\\
            &\,+\sum_{l=1}^d\int_t^s\Expect\Big[\delta_\mu \sigma_{i,l}(y^\prime,\tilde{\mu}_r^{t,\mu},Y_r^{t,y,\rho})\partial_{y_j}Z_r^{t,(y,z),\rho}\\
            &\,+(\delta_\mu \sigma_{i,l}(y^\prime,\tilde{\mu}_r^{t,\mu},Y_r^{t,\xi})U_{r,d+1,j}^{t,\xi}(y,z)\Big]\big|_{y^\prime=Y_r^{t,u,\rho}}\,\d W_r^{l},\\
            &\qquad\qquad\qquad\qquad\qquad\qquad\qquad\qquad\qquad \quad i=1,...,d,\\
        D_\rho Z_{s,j}^{t,(u,v),\rho}(y,z)=&\,\sum_{k=1}^d\int_t^s\partial_{y_k} c(Y_r^{t,u,\rho},\tilde{\mu}_r^{t,\mu})Z_r^{t,(u,v),\rho} D_\rho Y_{r,k,j}^{t,(u,v),\rho}(y,z)\,\d r\\
            &\,+\int_t^s c(Y_r^{t,u,\rho},\tilde{\mu}_r^{t,\mu}) D_\rho Z_{r,j}^{t,(u,v),\rho}(y,z)\,\d r\\
            &\,+\sum_{k=1}^d\int_t^s\Expect\Big[z^\prime(D_\mu c)_k(y^\prime,\tilde{\mu}_r^{t,\mu},Y_r^{t,y,\rho})Z_r^{t,(y,z),\rho}\partial_{y_j}Y_{r,k}^{t,y,\rho}\\
            &\,+z^\prime(D_\mu c)_k(y^\prime,\tilde{\mu}_r^{t,\mu},Y_r^{t,\xi})Z_r^{t,\xi}U_{r,k,j}^{t,\xi}(y,z)\Big]\big|_{(y^\prime,z^\prime)=(Y,Z)_r^{t,(u,v),\rho}}\,\d r,\\
            &\,+\int_t^s\Expect\Big[z^\prime\delta_\mu c(y^\prime,\tilde{\mu}_r^{t,\mu},Y_r^{t,y,\rho})\partial_{y_j}Z_r^{t,(y,z),\rho}\\
            &\,+z^\prime\delta_\mu c(y^\prime,\tilde{\mu}_r^{t,\mu},Y_r^{t,\xi})U_{r,d+1,j}^{t,\xi}(y,z)\Big]\big|_{(y^\prime,z^\prime)=(Y,Z)_r^{t,(u,v),\rho}}\,\d r,\\
    \end{aligned}
    \right.
\end{equation}
where $U^{t,\xi}(y,z)=\big(U_{s,i,j}^{t,\xi}(y,z)\big)_{s\in[t,T];\,i,j=1,...,d+1}$ is that of the SDE
\begin{equation}\label{lifting_intrinsic_auxiliary}
    \left\{
    \begin{aligned}
        U_{s,i,j}^{t,\xi}(y,z)=&\,\sum_{k=1}^d\int_t^s\partial_{y_k} b_i(Y_r^{t,\xi},\tilde{\mu}_r^{t,\mu})U_{r,k,j}^{t,\xi}(y,z)\,\d r\\
            &\,+\sum_{k,\,l=1}^d\int_t^s\partial_{y_k} \sigma_{i,l}(Y_r^{t,\xi},\tilde{\mu}_r^{t,\mu})U_{r,k,j}^{t,\xi}(y,z)\,\d W_r^{l}\\
            &\,+\sum_{k=1}^d\int_t^s\Expect\Big[(D_\mu b_i)_k(y^\prime,\tilde{\mu}_r^{t,\mu},Y_r^{t,y,\rho})Z_r^{t,(y,z),\rho}\partial_{y_j}Y_{r,k}^{t,y,\rho}\\
            &\,+(D_\mu b_i)_k(y^\prime,\tilde{\mu}_r^{t,\mu},Y_r^{t,\xi})Z_r^{t,\xi}U_{r,k,j}^{t,\xi}(y,z)\Big]\big|_{y^\prime=Y_r^{t,\xi}}\,\d r,\\
            &\,+\sum_{k,\,l=1}^d\int_t^s\Expect\Big[(D_\mu \sigma_{i,l})_k(y^\prime,\tilde{\mu}_r^{t,\mu},Y_r^{t,y,\rho})Z_r^{t,(y,z),\rho}\partial_{y_j}Y_{r,k}^{t,y,\rho}\\
            &\,+(D_\mu \sigma_{i,l})_k(y^\prime,\tilde{\mu}_r^{t,\mu},Y_r^{t,\xi})Z_r^{t,\xi}U_{r,k,j}^{t,\xi}(y,z)\Big]\big|_{y^\prime=Y_r^{t,\xi}}\,\d W_r^{l},\\
            &\,+\int_t^s\Expect\Big[\delta_\mu b_i(y^\prime,\tilde{\mu}_r^{t,\mu},Y_r^{t,y,\rho})\partial_{y_j}Z_r^{t,(y,z),\rho}\\
            &\,+\delta_\mu b_i(y^\prime,\tilde{\mu}_r^{t,\mu},Y_r^{t,\xi})U_{r,d+1,j}^{t,\xi}(y,z)\Big]\big|_{y^\prime=Y_r^{t,\xi}}\,\d r,\\
            &\,+\sum_{l=1}^d\int_t^s\Expect\Big[\delta_\mu \sigma_{i,l}(y^\prime,\tilde{\mu}_r^{t,\mu},Y_r^{t,y,\rho})\partial_{y_j}Z_r^{t,(y,z),\rho}\\
            &\,+(\delta_\mu \sigma_{i,l}(y^\prime,\tilde{\mu}_r^{t,\mu},Y_r^{t,\xi})U_{r,d+1,j}^{t,\xi}(y,z)\Big]\big|_{y^\prime=Y_r^{t,\xi}}\,\d W_r^{l},\\
            &\qquad\qquad\qquad\qquad\qquad\qquad\qquad\qquad\qquad \quad i=1,...,d,\\
        U_{s,d+1,j}^{t,\xi}(y,z)=&\,\sum_{k=1}^d\int_t^s\partial_{y_k} c(Y_r^{t,\xi},\tilde{\mu}_r^{t,\mu})Z_r^{t,\xi} U_{r,k,j}^{t,\xi}(y,z)\,\d r\\
            &\,+\int_t^s c(Y_r^{t,\xi},\tilde{\mu}_r^{t,\mu}) U_{r,d+1,j}^{t,\xi}(y,z)\,\d r\\
            &\,+\sum_{k=1}^d\int_t^s\Expect\Big[z^\prime(D_\mu c)_k(y^\prime,\tilde{\mu}_r^{t,\mu},Y_r^{t,y,\rho})Z_r^{t,(y,z),\rho}\partial_{y_j}Y_{r,k}^{t,y,\rho}\\
            &\,+z^\prime(D_\mu c)_k(y^\prime,\tilde{\mu}_r^{t,\mu},Y_r^{t,\xi})Z_r^{t,\xi}U_{r,k,j}^{t,\xi}(y,z)\Big]\big|_{(y^\prime,z^\prime)=(Y,Z)_r^{t,\xi}}\,\d r,\\
            &\,+\int_t^s\Expect\Big[z^\prime\delta_\mu c(y^\prime,\tilde{\mu}_r^{t,\mu},Y_r^{t,y,\rho})\partial_{y_j}Z_r^{t,(y,z),\rho}\\
            &\,+z^\prime\delta_\mu c(y^\prime,\tilde{\mu}_r^{t,\mu},Y_r^{t,\xi})U_{r,d+1,j}^{t,\xi}(y,z)\Big]\big|_{(y^\prime,z^\prime)=(Y,Z)_r^{t,\xi}}\,\d r.\\
    \end{aligned}
    \right.
\end{equation}

Here $j=1,...,d+1$, and recall $y_{d+1}=z$.
Let $V^{t,\xi}(y.z):=\partial_z U^{t,\xi}(y.z)$, and it holds by \eqref{lifted_solution} that
\begin{equation}\label{lifting_intrinsic_auxiliary02}
    \begin{aligned}
        V_{s,i,d+1}^{t,\xi}(y,z)=&\,\sum_{k=1}^d\int_t^s\partial_{y_k} b_i(Y_r^{t,\xi},\tilde{\mu}_r^{t,\mu})V_{r,k,d+1}^{t,\xi}(y,z)\,\d r\\
            &\,+\sum_{k,\,l=1}^d\int_t^s\partial_{y_k} \sigma_{i,l}(Y_r^{t,\xi},\tilde{\mu}_r^{t,\mu})V_{r,k,d+1}^{t,\xi}(y,z)\,\d W_r^{l}\\
            &\,+\sum_{k=1}^d\int_t^s\Expect\Big[(D_\mu b_i)_k(y^\prime,\tilde{\mu}_r^{t,\mu},Y_r^{t,\xi})Z_r^{t,\xi}V_{r,k,d+1}^{t,\xi}(y,z)\Big]\big|_{y^\prime=Y_r^{t,\xi}}\,\d r,\\
            &\,+\sum_{k,\,l=1}^d\int_t^s\Expect\Big[(D_\mu \sigma_{i,l})_k(y^\prime,\tilde{\mu}_r^{t,\mu},Y_r^{t,\xi})Z_r^{t,\xi}V_{r,k,d+1}^{t,\xi}(y,z)\Big]\big|_{y^\prime=Y_r^{t,\xi}}\,\d W_r^{l},\\
            &\,+\int_t^s\Expect\Big[\delta_\mu b_i(y^\prime,\tilde{\mu}_r^{t,\mu},Y_r^{t,\xi})V_{r,d+1,d+1}^{t,\xi}(y,z)\Big]\big|_{y^\prime=Y_r^{t,\xi}}\,\d r,\\
            &\,+\sum_{l=1}^d\int_t^s\Expect\Big[(\delta_\mu \sigma_{i,l}(y^\prime,\tilde{\mu}_r^{t,\mu},Y_r^{t,\xi})V_{r,d+1,d+1}^{t,\xi}(y,z)\Big]\big|_{y^\prime=Y_r^{t,\xi}}\,\d W_r^{l},\\
            &\qquad\qquad\qquad\qquad\qquad\qquad\qquad\qquad\qquad \quad i=1,...,d,\\
        V_{s,d+1,d+1}^{t,\xi}(y,z)=&\,\sum_{k=1}^d\int_t^s\partial_{y_k} c(Y_r^{t,\xi},\tilde{\mu}_r^{t,\mu})Z_r^{t,\xi} V_{r,k,d+1}^{t,\xi}(y,z)\,\d r\\
            &\,+\int_t^s c(Y_r^{t,\xi},\tilde{\mu}_r^{t,\mu}) V_{r,d+1,d+1}^{t,\xi}(y,z)\,\d r\\
            &\,+\sum_{k=1}^d\int_t^s\Expect\Big[z^\prime(D_\mu c)_k(y^\prime,\tilde{\mu}_r^{t,\mu},Y_r^{t,\xi})Z_r^{t,\xi}V_{r,k,d+1}^{t,\xi}(y,z)\Big]\big|_{(y^\prime,z^\prime)=(Y,Z)_r^{t,\xi}}\,\d r,\\
            &\,+\int_t^s\Expect\Big[z^\prime\delta_\mu c(y^\prime,\tilde{\mu}_r^{t,\mu},Y_r^{t,\xi})V_{r,d+1,d+1}^{t,\xi}(y,z)\Big]\big|_{(y^\prime,z^\prime)=(Y,Z)_r^{t,\xi}}\,\d r.\\
    \end{aligned}
\end{equation}
Hence, we have $(V_{s,i,d+1}^{t,\xi})_{1\leq i\leq d}\equiv0$, and thus $(U_{s,i,d+1}^{t,\xi}(y,z))_{1\leq i\leq d}=(U_{s,i,d+1}^{t,\xi}(y))_{1\leq i\leq d}$.
On the other hand, for $1\leq j\leq d$, it holds analogously that
\[
    \begin{aligned}
        &\quad V_{s,i,j}^{t,\xi}(y,z)=\sum_{k=1}^d\int_t^s\partial_{y_k} b_i(Y_r^{t,\xi},\tilde{\mu}_r^{t,\mu})V_{r,k,j}^{t,\xi}(y,z)\,\d r
            +\sum_{k,\,l=1}^d\int_t^s\partial_{y_k} \sigma_{i,l}(Y_r^{t,\xi},\tilde{\mu}_r^{t,\mu})V_{r,k,j}^{t,\xi}(y,z)\,\d W_r^{l}\\
            &\,+\sum_{k=1}^d\int_t^s\Expect\Big[(D_\mu b_i)_k(y^\prime,\tilde{\mu}_r^{t,\mu},Y_r^{t,y,\rho})F_r^{t,y,\rho}\partial_{y_j}Y_{r,k}^{t,y,\rho}
            +(D_\mu b_i)_k(y^\prime,\tilde{\mu}_r^{t,\mu},Y_r^{t,\xi})Z_r^{t,\xi}V_{r,k,j}^{t,\xi}(y,z)\Big]\big|_{y^\prime=Y_r^{t,\xi}}\,\d r,\\
            &\,+\sum_{k,\,l=1}^d\int_t^s\Expect\Big[(D_\mu \sigma_{i,l})_k(y^\prime,\tilde{\mu}_r^{t,\mu},Y_r^{t,y,\rho})F_r^{t,y,\rho}\partial_{y_j}Y_{r,k}^{t,y,\rho}\\
            &\,+(D_\mu \sigma_{i,l})_k(y^\prime,\tilde{\mu}_r^{t,\mu},Y_r^{t,\xi})Z_r^{t,\xi}V_{r,k,j}^{t,\xi}(y,z)\Big]\big|_{y^\prime=Y_r^{t,\xi}}\,\d W_r^{l},\\
            &\,+\int_t^s\Expect\Big[\delta_\mu b_i(y^\prime,\tilde{\mu}_r^{t,\mu},Y_r^{t,y,\rho})\partial_{y_j}F_r^{t,y,\rho}
            +\delta_\mu b_i(y^\prime,\tilde{\mu}_r^{t,\mu},Y_r^{t,\xi})V_{r,d+1,j}^{t,\xi}(y,z)\Big]\big|_{y^\prime=Y_r^{t,\xi}}\,\d r,\\
            &\,+\sum_{l=1}^d\int_t^s\Expect\Big[\delta_\mu \sigma_{i,l}(y^\prime,\tilde{\mu}_r^{t,\mu},Y_r^{t,y,\rho})\partial_{y_j}F_r^{t,y,\rho}
            +(\delta_\mu \sigma_{i,l}(y^\prime,\tilde{\mu}_r^{t,\mu},Y_r^{t,\xi})V_{r,d+1,j}^{t,\xi}(y,z)\Big]\big|_{y^\prime=Y_r^{t,\xi}}\,\d W_r^{l},
    \end{aligned}
\]
for $i=1,...,d$, and that
\[
    \begin{aligned}
        &\quad V_{s,d+1,j}^{t,\xi}(y,z)=\sum_{k=1}^d\int_t^s\partial_{y_k} c(Y_r^{t,\xi},\tilde{\mu}_r^{t,\mu})Z_r^{t,\xi} V_{r,k,j}^{t,\xi}(y,z)\,\d r
            +\int_t^s c(Y_r^{t,\xi},\tilde{\mu}_r^{t,\mu}) V_{r,d+1,j}^{t,\xi}(y,z)\,\d r\\
            &\,+\sum_{k=1}^d\int_t^s\Expect\Big[z^\prime(D_\mu c)_k(y^\prime,\tilde{\mu}_r^{t,\mu},Y_r^{t,y,\rho})F_r^{t,y,\rho}\partial_{y_j}Y_{r,k}^{t,y,\rho}\\
            &\,+z^\prime(D_\mu c)_k(y^\prime,\tilde{\mu}_r^{t,\mu},Y_r^{t,\xi})Z_r^{t,\xi}V_{r,k,j}^{t,\xi}(y,z)\Big]\big|_{(y^\prime,z^\prime)=(Y,Z)_r^{t,\xi}}\,\d r,\\
            &\,+\int_t^s\Expect\Big[z^\prime\delta_\mu c(y^\prime,\tilde{\mu}_r^{t,\mu},Y_r^{t,y,\rho})\partial_{y_j}F_r^{t,y,\rho}
            +z^\prime\delta_\mu c(y^\prime,\tilde{\mu}_r^{t,\mu},Y_r^{t,\xi})V_{r,d+1,j}^{t,\xi}(y,z)\Big]\big|_{(y^\prime,z^\prime)=(Y,Z)_r^{t,\xi}}\,\d r,\\
    \end{aligned}
\]
which yields $\partial_z V_{s,i,j}^{t,\xi}(y,z)=0$ and thus $V_{s,i,j}^{t,\xi}(y,z)=V_{s,i,j}^{t,\xi}(y)$ for $j=1,2,...,d$.
Note that if we let $z=0$ in \eqref{lifting_intrinsic_auxiliary}, then analogously to \eqref{lifting_intrinsic_auxiliary02} we have $U_{s,i,j}^{t,\xi}(y,0)\equiv0$ for all $1\leq i\leq d+1,1\leq j\leq d$.
Hence, combining the above argument with \eqref{lifting_intrinsic_auxiliary}, we have
\begin{equation}\label{lifting_intrinsic_auxiliary_simplified}
    \left\{
        \begin{aligned}
            U_{s,i,j}^{t,\xi}(y,z)=\,&zV_{s,i,j}^{t,\xi}(y),\quad 1\leq j\leq d,\\
            U_{s,i,d+1}^{t,\xi}(y,z)=\,&U_{s,i,d+1}^{t,\xi}(y).
        \end{aligned}
    \right.
\end{equation}

Applying \eqref{lifting_intrinsic_auxiliary_simplified} to \eqref{lifted_intrinsic_sde}, we can similarly obtain
\begin{equation}\label{lifting_intrinsic_sde_simplified}
    \left\{
        \begin{aligned}
            D_\rho Y_{s,i,j}^{t,(u,v),\rho}(y,z)=\,&zA_{s,i,j}^{t,(u,v),\rho}(y),\quad 1\leq i\leq d,\,1\leq j\leq d,\\
            D_\rho Z_{s,j}^{t,(u,v),\rho}(y,z)=\,&zB_{s,j}^{t,(u,v),\rho}(y),\quad 1\leq j\leq d,\\
            D_\rho Y_{s,i,d+1}^{t,(u,v),\rho}(y,z)=\,&D_\rho Y_{s,i,d+1}^{t,(u,v),\rho}(y),\quad 1\leq i\leq d,\\
            D_\rho Z_{s,d+1}^{t,(u,v),\rho}(y,z)=\,&D_\rho Z_{s,d+1}^{t,(u,v),\rho}(y),
        \end{aligned}
    \right.
\end{equation}
where 
\[A_{s,i,j}^{t,(u,v),\rho}(y):=\partial_z D_\rho Y_{s,i,j}^{t,(u,v),\rho}(y,z),\quad B_{s,j}^{t,(u,v),\rho}(y):=\partial_z D_\rho Z_{s,j}^{t,(u,v),\rho}(y,z),\]
for $1\leq j\leq d$.
Applying \eqref{lifted_solution} and \eqref{lifting_intrinsic_sde_simplified} to \eqref{intrinsic_derivative_lifted}, we finish the proof of Lemma~\ref{intrinsic_is_linear}. 
    
\end{appendix}

\bibliographystyle{plain}
\bibliography{myref.bib}

\end{document}